\newcommand{\cA}{\mathcal{A}}\newcommand{\cB}{\mathcal{B}}
\newcommand{\cC}{\mathcal{C}}\newcommand{\cD}{\mathcal{D}}
\newcommand{\cE}{\mathcal{E}}\newcommand{\cF}{\mathcal{F}}
\newcommand{\cG}{\mathcal{G}}\newcommand{\cH}{\mathcal{H}}
\newcommand{\cL}{\mathcal{L}}
\newcommand{\cO}{\mathcal{O}}
\newcommand{\cS}{\mathcal{S}}\newcommand{\cT}{\mathcal{T}}
\newcommand{\cX}{\mathcal{X}}
\newcommand{\C}{\mathbb{C}}
\newcommand{\bC}{\mathbb{C}}
\newcommand{\bE}{\mathbb{E}}\newcommand{\bF}{\mathbb{F}}
\newcommand{\bN}{\mathbb{N}}
\newcommand{\bQ}{\mathbb{Q}}\newcommand{\bR}{\mathbb{R}}
\newcommand{\bZ}{\mathbb{Z}}
\newcommand{\RS}{\Sigma^n_+}
\newcommand{\Div}{\mathbb{\mr{Div}}}
\newcommand{\Sym}{\mathbb{\mr{Sym}}}
\newcommand{\bk}{\mathbf{k}}
\newcommand{\be}{\mathbf{e}}
\newcommand{\br}{\mathbf{r}}
\newcommand{\bj}{\mathbf{j}}
\newcommand{\bc}{\mathbf{c}}
\newcommand{\N}{\mathbb{N}}
\newcommand{\R}{\mathbb{R}}
\newcommand{\Z}{\mathbb{Z}}
\newcommand{\m}{\to}
\newtheorem{theorem}{Theorem}[section]
\newtheorem{lemma}[theorem]{Lemma}
\newtheorem{proposition}[theorem]{Proposition}
\newtheorem{corollary}[theorem]{Corollary}
\newtheorem*{corollary*}{Corollary}
\newtheorem*{theorem*}{Theorem}
\theoremstyle{definition}
\newtheorem{definition}[theorem]{Definition}
\newtheorem{example}[theorem]{Example}
\newtheorem{remark}[theorem]{Remark}
\newcommand{\cat}[1]{\mathsf{#1}}
\newcommand{\mr}[1]{{\rm #1}}
\newcommand{\fS}{\mathfrak{S}}
\newcommand{\fD}{\mathbf{D}_\theta}
\newcommand{\cdt}{\cE^\theta_n}
\newcommand{\cdtf}{\cE^\mr{SO}_n}
\newcommand{\cdtt}{\cE^{\mr{pt}}_n}
\newcommand{\bdt}{\mathbf{E}^\theta_n}
\newcommand{\bdtc}{\mathbf{E}^{\theta,C}_n}
\newcommand{\bdtt}{\mathbf{E}^\mr{pt}_n}
\title{$E_n$-cell attachments and a local-to-global principle for homological stability}
\author{Alexander Kupers}
\thanks{Alexander Kupers is supported by a William R. Hewlett Stanford Graduate Fellowship, Department of Mathematics, Stanford University, and was partially supported by NSF grant DMS-1105058.}
\address{Institut for Matematiske Fag, K\o benhavns Universitet \\ Universitetsparken 5 \\ 2100 K\o benhavn \O, Denmark}
\email{kupers@math.ku.dk}
\author{Jeremy Miller} \address{Department of Mathematics, Purdue University  \\ 150 N. University Street \\ 47907-2067, West Lafayette, IN, USA}\email{jeremykmiller@purdue.edu}
\date{\today}
\begin{document}

\begin{abstract}We define  bounded generation for $E_n$-algebras in chain complexes and prove that for $n \geq 2$ this property is equivalent to homological stability. Using this we prove a local-to-global principle for homological stability, which says that if an $E_n$-algebra $A$ has homological stability (or equivalently the topological chiral homology $\int_{\bR^n} A$ has homology stability), then so has the topological chiral homology $\int_M A$ of any connected non-compact manifold $M$. Using scanning, we reformulate the local-to-global homological stability principle so that it applies to compact manifolds. We also give several applications of our results.\end{abstract}

\maketitle

\tableofcontents

\section{Introduction} 

This paper establishes a local-to-global principle for homological stability. Since all sufficiently natural and local constructions on the category of $n$-manifolds are equivalent to topological chiral homology with coefficients in some $E_n$-algebra, our local-to-global homological stability principle is formulated using topological chiral homology. To prove our result, we introduce a condition on $E_n$-algebras which we call \emph{bounded generation} and prove that this condition is equivalent to homological stability. This allows us to leverage homological stability for configuration spaces to prove that if an $E_n$-algebra has homological stability, then so does its topological chiral homology. We discuss several applications of this local-to-global homological stability principle as well as applications of the equivalence between bounded generation and homological stability.

\subsection{Homological stability for framed $E_n$-algebras}

When working in categories with some notion of homotopy theory, e.g. chain complexes or topological spaces, one can talk about algebras with commutativity conditions interpolating between associative and commutative; these are $E_n$-algebras. The typical example of an $E_n$-algebra is an $n$-fold based loop space $\Omega^n X$. A framed $E_n$-algebra is an $E_n$-algebra with a compatible action of the special orthogonal group $SO(n)$. See Section \ref{suboperads} for precise definitions.

Many framed $E_n$-algebras of interest in the category of spaces have $\pi_0$ isomorphic to $\bN_0$, the non-negative integers, and one can compare the homology of different components. If the $i$th homology group of the $k$th component eventually becomes independent of $k$, then the framed $E_n$-algebra is said to have \textit{homological stability}, see Definition \ref{defhomstab}. This is the property of framed $E_n$-algebras we are interested in.

The basic example of a framed $E_n$-algebra that has homological stability is the configuration space of unordered particles in an $n$-dimensional Euclidean space, $X = \bigsqcup_{k \geq 0} C_k(\bR^n)$ \cite{Ar} \cite[Appendix A]{Se}. Here $C_k(\bR^n)$ is defined as $\{(x_1,\ldots,x_k) \in (\R^n)^k \,|\,x_i \neq x_j \text{ if $i \neq j$}\}/\fS_k$ with $\fS_k$ the symmetric group on $k$ letters. Other examples of framed $E_n$-algebras with homological stability include symmetric powers of $\bR^n$ \cite{Srod}, bounded symmetric powers of $\bR^n$ \cite{Y} \cite{kupersmillertran}, various decorated configuration spaces \cite{RW}, completions of certain partial $E_n$-algebras \cite{kupersmillercompletions}, some spaces of branched covers \cite{EVW}, classifying spaces of groups of diffeomorphisms fixing a disk \cite{harerstab} \cite{wahlmcg} \cite{sorenoscarstability}, moduli spaces of manifolds embedded in $\bR^n$ \cite{palmerthesis}, moduli spaces of instantons \cite{instantonmoduli} and spaces of rational or holomorphic functions \cite{Se} \cite{gravesen} \cite{BMHM} \cite{Gu}. In all of these examples, the map eventually inducing isomorphisms on homology is constructed using the framed $E_n$-algebra structure.

We believe the result of this paper can be put into an $\infty$-categorical context. However, we chose not to do this because it is not strictly necessary and would make the paper less accessible. As a convention, all our manifolds are smooth (but see Remark \ref{remgener}).

\subsection{Homological stability for topological chiral homology}

One can use a framed $E_n$-algebra as coefficients for a homology theory on oriented $n$-dimensional manifolds, which is called \textit{topological chiral homology}. The input is a framed $E_n$-algebra $A$ in a symmetric monoidal $(\infty,1)$-category $\cat{C}$ (for us spaces or chain complexes) and an oriented $n$-dimensional manifold $M$, and the output is an object $\int_M A$ of $\cat{C}$. It is also known as \emph{factorization homology}, \emph{higher Hochschild homology} or \emph{configuration spaces with summable labels}. References for topological chiral homology include \cite{An}, \cite{Fr2}, \cite{ginottradlerzeinalian}, \cite{lurieha} and \cite{Sa}.

Many of the examples of framed $E_n$-algebras with homological stability mentioned in the previous subsection are obtained by applying a geometric construction to $\R^n$ and have natural analogues replacing $\R^n$ by an arbitrary $n$-dimensional manifold $M$. In many cases, the result of this geometric construction is weakly equivalent to the topological chiral homology of the manifold with coefficients in the framed $E_n$-algebra. For example, we have $\bigsqcup_{k \geq 0} C_k(M) \simeq \int_M \bigsqcup_{k \geq 0} C_k(\bR^n)$. See Example \ref{exampsym} and Sections \ref{secapplications} for other examples.

If $M$ is a connected manifold and $X$ is a framed $E_n$-algebra in spaces, then there is an isomorphism $\pi_0(X) \cong \pi_0(\int_M X)$. This leads us to the following question, originally posed by Ralph Cohen: \begin{quote}If the connected components of a framed $E_n$-algebra in spaces have homological stability, is the same true for its topological chiral homology on an oriented manifold?\end{quote} The answer to the question turns out to be: Yes, as long as we restrict to non-compact connected manifolds (see Corollary \ref{cortopstab}). The assumption that the manifold is non-compact is used to construct maps $t$ between components of the topological chiral homology by ``bringing particles in from infinity.'' However, see Section \ref{scanningintro} for a reformulation that applies to compact manifolds. 


\subsection{Chain complexes and charge}

The group of connected components of an $E_n$-algebra in topological spaces inherits a natural monoid structure. In this paper, we will often work in the category of non-negatively graded chain complexes because we are interested in statements about homology. However, in the category of chain complexes one cannot define connected components and this makes it harder to formulate homological stability. Our solution is to work with chain complexes with an extra grading which keeps track of the ``connected components.'' We will call this extra grading \textit{charge} to differentiate it from the homological grading. In examples closely related to configuration spaces, it should be thought of as the number of particles counted with multiplicity. To make this precise, we define a \textit{charged space} to be a space $X$ together with a decomposition $X = \bigsqcup_{c \geq 0}X(c)$, and a \textit{charged chain complex} to be a chain complex $A$ with a decomposition $A = \bigoplus_{c \geq 0} A(c)$. If $X$ is a charged space with each $X(c)$ connected, then a \textit{charged algebra} structure on $X$ is a framed $E_n$-algebra structure on $X$ which respects the decomposition. That is, we require the algebra structure maps to restrict to maps: \[E_n(m) \times X(c_1) \times \ldots X(c_m) \m X(c_1+\ldots+c_m)\] One can similarly define \textit{charged algebras} in the category of chain complexes. For precise definitions see Subsection \ref{subsubcharged}.  


\subsection{$E_n$-cell decompositions}

Before we state our main results, we describe framed $E_n$-cell attachments which are the primary technical tool of this paper. The class of \textit{cellular} framed $E_n$-algebras consists of those framed $E_n$-algebras that are built by successive framed $E_n$-cell attachments. Every charged algebra is weakly equivalent to a cellular algebra. This is similar to the fact that every topological space is weakly homotopy equivalent to a CW-complex. Studying framed $E_n$-cell decompositions allows us to characterize which charged algebras have homological stability and bound the homological stability range. 

What are framed $E_n$-cell attachments? While we will primarily be interested in framed $E_n$-cell attachments in the category of chain complexes, the construction can also be defined for spaces. In that case the construction is more familiar: one can attach a cell (in the sense of CW-complexes) to a framed $E_n$-algebra in spaces, but the resulting space does not naturally carry the structure of framed $E_n$-algebra. For example, we do not know how to multiply elements in the interior of the cell. However, it is naturally a partial framed $E_n$-algebra and by freely adding those operations that are not defined yet, every partial framed $E_n$-algebra can be completed to a framed $E_n$-algebra. This is a framed $E_n$-cell attachment in spaces and a similar construction works in the category of chain complexes, see Section \ref{secencell}.

\subsection{Main result}

We can now state the main result of this paper. A charged algebra is called \emph{bounded generated} if it is weakly equivalent to a cellular framed $E_n$-algebra obtained by $E_n$-cell attachments where all $E_n$-cells of a fixed dimension are attached in only finitely many charges.

\begin{theorem}\label{thmmain} Let $n \geq 2$ and $A$ be a charged algebra in chain complexes, then the following are equivalent:
\begin{enumerate}[(i)]
\item The charged algebra $A$ has homological stability as in Definition \ref{defhomstab}.
\item For all oriented connected non-compact $n$-dimensional manifolds $M$, $\int_M A$ has homological stability as in Definition \ref{defhomstabtch}.
\item The charged algebra $A$ is  bounded generated as in Definition \ref{defboundedgeneration}.
\end{enumerate}\end{theorem}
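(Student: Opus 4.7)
The plan is to prove the cycle (iii) $\implies$ (i) $\implies$ (iii) together with (iii) $\implies$ (ii) $\implies$ (i). The last implication is immediate: specializing to $M = \bR^n$ one has $\int_{\bR^n} A \simeq A$, so stability of $\int_M A$ for all $M$ in particular forces stability of $A$. The substantive content is therefore the equivalence (i) $\iff$ (iii) and the propagation statement (iii) $\implies$ (ii).

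For (iii) $\implies$ (i), I would induct on the $E_n$-cell filtration of a cellular model of $A$. The base case is a free framed $E_n$-algebra on finitely many cells, whose homology splits according to how many times each generator is used and is controlled in each charge by the homology of unordered configuration spaces $C_k(\bR^n)$; stability in this case reduces to the classical Arnold--Segal stability for $C_k(\bR^n)$, and this is where the hypothesis $n \geq 2$ first enters. For the inductive step, each $E_n$-cell attachment in dimension $d$ and charge $\leq C_d$ appears as a pushout along a map of free $E_n$-algebras, producing a long exact sequence (or a small spectral sequence) in homology; because the attaching map has bounded charge support, the stability range degrades by a controlled additive amount at each step, and the hypothesis that $C_d$ is finite at every dimension ensures the range still grows linearly with charge.

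For (i) $\implies$ (iii), I would build a minimal cellular model of $A$ inductively on homological degree. The goal is to show that the $E_n$-indecomposables of $A$, computed by a bar-type construction, have bounded charge support in each homological degree. Given a cellular approximation through dimension $d-1$ with bounded charge cells, the next layer of cells corresponds to a basis of the indecomposables in degree $d$, so one must show that homological stability of $A$ itself implies bounded charge support for these indecomposables. This is the main obstacle: one must invert the spectral sequence from $E_n$-homology to $H_*(A)$ sufficiently to argue that decomposables cannot conspire to mask unbounded behavior of the generators. The inductive hypothesis, combined with the stability range in step one, lets one bootstrap this cancellation argument degree by degree.

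For (iii) $\implies$ (ii), I would exploit that topological chiral homology $\int_M(-)$ is a symmetric monoidal colimit-preserving functor from framed $E_n$-algebras, so it carries cell attachments to cell attachments. A bounded generated cellular model for $A$ therefore yields a compatible cellular description of $\int_M A$, in which a free-cell contribution becomes an object assembled from labeled configuration spaces $C_k(M)$. The homological stability for configuration spaces on connected non-compact manifolds of dimension $n \geq 2$ due to McDuff, Segal, and Randal-Williams plays the role that Arnold--Segal played for $\bR^n$ in step one, and the same inductive argument over the cell filtration then propagates stability through the cell attachments to give homological stability of $\int_M A$. Non-compactness of $M$ is essential here because it provides the stabilization map $t$ obtained by bringing particles in from infinity.
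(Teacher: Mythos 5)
Your outline of (ii) $\Rightarrow$ (i) and the general shape of (iii) $\Rightarrow$ (ii) match the paper, but two points need attention, one of them a genuine gap. First, the smaller issue: the effect of an $E_n$-cell attachment on $\int_M$ is not governed by a long exact sequence or by "carrying cell attachments to cell attachments." Filtering $\int_M(A \uplus e_N)$ by the number of cells used gives a spectral sequence whose $E^1$-page is the fiberwise topological chiral homology of $A$ over the bundle of $p$-times punctured copies of $M$ sitting over the configuration space $C_p(M;\theta(TM))$ of cell locations, with labels in the frame bundle (connected fiber is where $n\geq 2$ and orientability enter). Consequently the inductive step needs homological stability of $\int_{M\setminus\{r\text{ points}\}}A$ \emph{uniformly in $r$}, so even the implication (iii) $\Rightarrow$ (i) cannot be run purely "locally": you must prove (iii) $\Rightarrow$ (ii) for the whole class of connected non-compact manifolds at once. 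Also, the range does not "degrade by a controlled additive amount at each step" (there may be infinitely many cells in each dimension); the point of bounded generation is that a cell of dimension $N$ sits in charge $\bk$ with $\rho(|\bk|)\leq N$, and under that hypothesis the spectral-sequence comparison preserves the range $\rho$ exactly.

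The genuine gap is in (i) $\Rightarrow$ (iii). Your plan — build a minimal cellular model from $E_n$-indecomposables and show that stability forces the indecomposables to have bounded charge support by "inverting" the spectral sequence from $E_n$-homology to $H_*(A)$ — is exactly the hard point, and you give no mechanism for it; the claimed bootstrap is not an argument. The paper avoids this question entirely. Given an $N$-cellular approximation $B\to A$, one attaches $(N{+}1)$-cells to $B$ \emph{only in total charges} $\leq \rho^{-1}(N{+}1)$, in two rounds (first killing the kernel on $H_N$, then adding free cells to hit the cokernel on $H_{N+1}$), using only the elementary computation of how a single cell attachment changes $H_*$ in degrees $\leq N{+}1$. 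This produces $B'$ mapping to $A$ by an $(N{+}1)$-equivalence in those bounded charges; one then uses homological stability of \emph{both} sides — of $A$ by hypothesis, and of $B'$ because bounded cellular algebras are already known to be stable by the (iii) $\Rightarrow$ (i)/(ii) step — and a commuting square with the stabilization maps $t_{\bj}$ to propagate the $(N{+}1)$-equivalence to all charges. So no analysis of $E_n$-indecomposables is needed, and there is no need to show that a minimal generating set has bounded charge; the whole proof is a single simultaneous induction on homological degree intertwining (i), (ii) and (iii), rather than the independent cycle of implications you propose. Without some replacement for this propagation trick (or a genuine argument that stability bounds the charges of $E_n$-generators), your step (i) $\Rightarrow$ (iii) does not go through.
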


As $\int_{\R^n} A \simeq A$, the equivalence of (i) and (ii) can be captured by the slogan: ``topological chiral homology has homological stability globally if and only if it has homological stability locally.'' Also note that this equivalence between (i) and (ii) holds trivially for $n=1$, since each  non-compact connected $1$-dimensional manifold is diffeomorphic to $\bR$ and $\int_\bR A \simeq A$. 

Our proof of Theorem \ref{thmmain} is a simultaneous induction involving (i), (ii) and (iii). This shows that bounded generation is a useful concept even if one is only interested in homological stability and reinforces the idea that topological chiral homology is a useful construction even if one is only interested in the properties of $E_n$-algebras. We also remark that the use of condition (iii) to prove (ii) can be summarized by saying that we resolve topological chiral homology in the algebra variable, not in the manifold variable as is traditionally done in homological stability arguments. We note that  our argument does use homological stability for configuration spaces as input (which can be proven using more traditional techniques). See Theorem \ref{thmmainrange} for a version of Theorem \ref{thmmain} involving homological stability with an explicit range. 

\begin{remark}\label{remgener}Using the same method one can give various generalizations of the result. Two of these will be treated in this paper because of their relevance to applications:
\begin{itemize}
	\item We can replace the monoid $\N_0$ used to define charged algebras with any partial abelian monoid of charges $C$ (see Definition \ref{defcharge}). As we do not know of interesting applications of this level of generality, in this paper we only prove our theorems in the case where the monoid is $\N_0^d$ with addition. 
 	\item One can replace orientations with other tangential structures $\theta$, e.g. consider framed $n$-manifolds and $E_n$-algebras. We prove a version of Theorem \ref{thmmain} for any tangential structure with the property that the corresponding space of $\theta$-framed embeddings of $\bR^n$ into itself is connected (this includes framed manifolds, but not unoriented manifolds). It would be interesting to know if this connectivity assumption can be removed.



\end{itemize}

The following is a non-exhaustive list of generalizations that we will not discuss in this paper.
\begin{itemize}
	\item One can generalize to other types of manifolds, e.g. topological manifolds and the corresponding of version of framed $E_n$-algebras. This requires no change except replacing $O(n)$ with $\mr{Top}(n)$ and the tangent bundle with the tangent microbundle, as well as using ideas from \cite{kupersmillerimprov} to construct stabilization maps.
 	\item One can replace the target category with chain complexes over any ring or more generally with the positive part $\cC_{\geq 0}$ of a stable symmetric-monoidal $(\infty,1)$-category $\cC$ with a compatible $t$-structure. Examples include spectra and module spectra over an $E_\infty$-ring spectrum.
\end{itemize}\end{remark}

Theorem \ref{thmmain} has the local-to-global principle for homological stability as an easy corollary:

\begin{corollary}\label{cortopstab} Suppose that $X$ is a charged algebra in spaces, then $X \simeq \int_{\R^n} X$ has homological stability if and only if $\int_M X$ has homological stability for all oriented connected non-compact $n$-dimensional manifolds $M$.
\end{corollary}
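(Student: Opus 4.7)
The plan is to reduce the statement from spaces to chain complexes so that Theorem \ref{thmmain} applies directly. Since homological stability as formulated in Definition \ref{defhomstab} and Definition \ref{defhomstabtch} is a statement about homology groups, passing through the singular chains functor should lose no information.

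First, I would apply the normalized singular chains functor $C_*(-;\bZ)$ to obtain a charged algebra $A := C_*(X)$ in chain complexes. The key point here is that $C_*$ is lax symmetric monoidal via the Eilenberg--Zilber shuffle map, and in fact is symmetric monoidal up to weak equivalence on cofibrant objects, so it sends framed $E_n$-algebras in spaces to framed $E_n$-algebras in chain complexes. It also respects the coproduct/direct sum decomposition indexed by charge, because $C_*(\bigsqcup_{c} X(c)) = \bigoplus_{c} C_*(X(c))$. Therefore $A = C_*(X)$ is a charged algebra in chain complexes in the sense of Subsection \ref{subcharged}, and by definition $X$ has homological stability if and only if $A$ does.

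Second, I would invoke the compatibility of singular chains with topological chiral homology, namely the natural weak equivalence
\[
C_*\!\left(\int_M X\right) \;\simeq\; \int_M C_*(X)
\]
for any oriented connected $n$-manifold $M$. This follows from the fact that topological chiral homology is constructed as a (homotopy) colimit of tensor products of the algebra indexed by configurations of disks in $M$, and the symmetric monoidal functor $C_*$ commutes with such colimits up to quasi-isomorphism; alternatively, one can appeal to the universal property of $\int_M(-)$ as the unique symmetric monoidal homology theory on $n$-manifolds extending $A$. Granting this, $\int_M X$ has homological stability in the sense of Definition \ref{defhomstabtch} if and only if $\int_M A$ does.

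Having set up this dictionary, the corollary follows immediately from the equivalence $\mathrm{(i)} \Leftrightarrow \mathrm{(ii)}$ in Theorem \ref{thmmain} applied to $A = C_*(X)$: the left side of the corollary translates to (i) for $A$, and the right side translates to (ii) for $A$. The only step requiring care is the natural equivalence $C_*(\int_M X) \simeq \int_M C_*(X)$, which I expect to be the main (though quite standard) technical point; everything else is formal bookkeeping about how the charge grading and the framed $E_n$-action pass through singular chains.
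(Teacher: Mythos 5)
Your proposal is correct and follows essentially the same route as the paper: pass to $A = C_*(X)$ (homological stability for $X$ is by definition that of $C_*(X)$, and the chains on a charged algebra in spaces are a charged algebra in chain complexes), apply the equivalence (i) $\Leftrightarrow$ (ii) of Theorem \ref{thmmain}, and verify the one technical point $C_*(\int_M X) \simeq \int_M C_*(X)$, which the paper also isolates as the only real content and proves by noting that singular chains commute with the geometric realization of the bar construction up to quasi-isomorphism (filtering by simplicial degree and comparing spectral sequences).
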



Note that to prove this theorem, our methods require that we work in a stable category, like chain complexes. Thus, even though we are primarily interested in $E_n$-algebras in spaces, it is helpful to also consider $E_n$-algebras in chain complexes.


The above results give us two new techniques for proving homological stability theorems. The first involves proving homological stability locally and then applying the local-to-global homological stability principle. In Subsection \ref{subseclocalglobal} we will give several examples of this. Here, we illustrate it by giving a new proof of Steenrod's result that symmetric powers exhibit homological stability \cite{Srod}.

\begin{example}\label{exampsym} Let $\mr{Sym}_k(M)$ denote the quotient space $M^k/\fS_k$ with the symmetric group $\fS_k$ acting by permuting the terms. We have that 
\[\bigsqcup_{k \geq 0} \mr{Sym}_k(M) \simeq \int_M \left(\bigsqcup_{k \geq 0} \mr{Sym}_{k}(\R^n)\right)\]
and since $\mr{Sym}_k(\R^n)$ is contractible for all $k$ and $n$, trivially the framed $E_n$-algebra $\bigsqcup_{k \geq 0} \mr{Sym}_{k}(\R^n)$  has homological stability. Corollary \ref{cortopstab} implies that the spaces $\mr{Sym}_k(M)$ have homological stability whenever $M$ is an oriented connected non-compact manifold of dimension $n \geq 2$. As symmetric powers are homotopy invariant, this actually proves homological stability for symmetric powers of any finite connected CW-complex by embedding it in some Euclidean space $\bR^n$ and taking a regular open neighborhood. This argument extends to ENR's by definition of an ENR and to arbitrary CW-complexes by exhaustion using finite subcomplexes.
\end{example}

The second new technique involves deducing homological stability from the existence of bounded $E_n$-cell decompositions. In Subsection \ref{subseccells}, we give several examples of this, including a large improvement over previously known homological stability ranges for bounded symmetric powers.

\subsection{Stable homology and compact manifolds} \label{scanningintro} After discussing the main theorem, two questions remain. What is the stable homology and what happens for compact manifolds? For $E_n$-algebras in spaces both questions are answered by the  \emph{scanning map}
\[s \colon \int^k_M X \to \Gamma^c_k(M,B^{TM} X)\]
Here $\int^k_M X$ denotes the charge $k$ component of $\int_M X$, $B^{TM} X$ is a bundle over $M$ with fiber given by the $n$-fold delooping $B^n X$ of $X$, and $\Gamma^c_k(-)$ denotes the space of compactly supported sections of degree $k$. In \cite{Mi2}, the second author proved that if $X$ has homological stability and $M$ is connected and non-compact, then the scanning map is a homology equivalence in a range increasing with $k$. When $M$ is non-compact, all the components of $\Gamma^c(M,B^{TM} X)$ are homotopy equivalent and thus the stable homology of $\int^k_M X$ is equal to $H_*(\Gamma^c_0(M,B^{TM} X))$  (see Theorem \ref{propstablehomology}).

In Theorem \ref{thmcompact}, we show that when $X$ has homological stability, the scanning map is a homology equivalence in a range even for compact manifolds (for technical reasons, we only give the proof in the case that the manifold is framed). However, for compact $M$ it is no longer the case that the components of $\Gamma^c(M,B^{TM} X)$ are homotopy equivalent and thus there is no such thing as the stable homology. This reflects the fact that $H_*(\int^k_M X)$ need not stabilize when $M$ is compact, even if the components of $X$ have homological stability (see Page 467 of \cite{Ch}).

These results show the importance of $B^n X$. We show in Proposition \ref{cellofBn} that attaching an $E_n$-cell of dimension $N$ to an $E_n$-algebra $X$ amounts to attaching an ordinary cell of dimension $n+N$ to $B^n X$. Similarly, when one attaches a framed $E_n$-cell of dimension $N$ to a framed $E_n$-algebra, it has the effect of attaching a free $SO(n)$-cell of dimension $n+N$ to $B^n X$.


\subsection{Acknowledgements} The authors would like to thank Ricardo Andrade, Kerstin Baer, Ralph Cohen, S\o ren Galatius, Martin Palmer and the anonymous referee for helpful conversations and comments.

\section{Topological chiral homology and completions of partial algebras} \label{sectch} 
In this section we define an operad $\cdt$ and topological chiral homology for partial $\cdt$-algebras. We also establish the existence of a useful spectral sequence and discuss the notion of homological stability. In contrast to the introduction, we will work with general tangential structures and partial monoids of charges.


\subsection{Operads, algebras and simplicial objects} We start with recalling some general definitions and results.

\subsubsection{Operads, monads and algebras} Operads are a general framework to encode algebraic structures and it hence is no surprise that we will use an operad to define the algebras of interest. An operad is a sequence of objects encoding for all integers $k \geq 0$ the $k$-ary operations in the algebraic structure of interest. Relevant references are \cite{M,fressebook}.

Let $(\cat{C},\otimes,1)$ be a symmetric monoidal category with all small colimits, small limits and such that $\otimes$ preserves colimits. A (reduced) operad in $\cat{C}$ is a sequence of objects $\{\cO(k)\}_{k \geq 0}$ such that $\cO(0) = 1$ with the following additional data: (i) an $\fS_k$-action on $\cO(k)$, where $\fS_k$ denotes the symmetric group on $k$ letters, (ii) operad composition morphisms $\cO(n) \otimes \cO(k_1) \otimes \ldots \otimes \cO(k_n) \to \cO(k_1+\ldots+k_n)$, and (iii) a unit map $1 \to \cO(1)$. These should satisfy appropriate unit, associativity and equivariance axioms.

The definition of an operad can be rephrased in terms of \emph{symmetric sequences}. A symmetric sequence $\cF$ is a sequence $\{\cF(k)\}_{k \geq 0}$ of objects $\cF(k)$ with $\fS_k$-action. For $X$ an object with a right $G$-action for some discrete group $G$ and $Y$ an object with a left $G$ action, let $X\otimes_{G}Y$ denote the quotient of $X \otimes Y$ by the diagonal action of $G$. Then there is a monoidal structure on the category of symmetric sequences with $\otimes$ given by
	\[(\cE \otimes \cF)(k) = \bigsqcup_{n} E(n) \otimes_{\fS_n} \left(\bigsqcup_{k_1+\ldots+k_n = k} \cF(k_1) \otimes \cdots \otimes \cF(k_n) \right)\]
An operad is the same as a unital monoid in symmetric sequences with respect to this monoidal structure which satisfies $\cF(0) = 1$.

For any symmetric sequence $\cF$, one can construct a functor $\mathbf{F} \colon \cat{C} \to \cat{C}$ as follows.

\begin{definition}Let $\cF$ be an symmetric sequence in $\cat{C}$. Then the functor $\mathbf{F} \colon \cat{C} \to \cat{C}$ is given by
	\[C \mapsto \bigsqcup_{n \geq 0} \cF(k) \otimes_{\fS_k} C^{\otimes k}\]
\end{definition}

An object $X$ of $\cat{C}$ can be viewed as a symmetric sequence $\cX$ with $\cX(0)=X$ and $\cX(i)$ an initial object of $\cat{C}$ for $i>0$. Given a symmetric sequence $\cF$, we have that $\cF \otimes \cX(0)=\mathbf F X$, and $\cF \otimes \cX(i)$ is an initial object for $i>0$. When $\cF$ is a unital monoid in symmetric sequences, $\mathbf{F}$ will be a unital monoid in functors:

\begin{definition}
A \emph{monad} in $\cat{C}$ is a unital monoid in the category of functors $\cat{C} \to \cat{C}$. That is, it is a functor $\mathbf{O}$ with a unit natural transformation $1\colon \mr{id} \to \mathbf{O}$ and a composition natural transformation $c\colon \mathbf{O}^2 \to \mathbf{O}$. These are required to satisfy associativity and unit axioms in the sense that the following diagrams commute:
\[\xymatrix{\mathbf{OOO} \ar[r]^{\mathbf{O}(c)} \ar[d]_c & \mathbf{OO} \ar[d]^c \\
	\mathbf{OO} \ar[r]_c & \mathbf{O}} \qquad \qquad \xymatrix{\mathbf{O} \ar@{=}[rd] \ar[r]^{\mathbf{O}(1)} & \mathbf{OO} \ar[d]^c \\
	& \mathbf{O}} \qquad \qquad \xymatrix{\mathbf{O} \ar@{=}[rd] \ar[r]^{1} & \mathbf{OO} \ar[d]^c \\
	& \mathbf{O}}\]
\end{definition}

\begin{proposition} Let $\cO$ be an operad and let $\mathbf{O}$ be the functor associated to $\cO$, viewed as a symmetric sequence. The operad structure on $\cO$ endows $\mathbf{O}$ with the structure of a monad. 
\end{proposition}


One can then define left modules and right modules over an operad $\cO$ as left modules and right modules in the category of symmetric sequences over the symmetric sequence $\{\cO(k)\}$. 

An algebra structure on an object $A$ is a left $\cO$-module structure on the associated symmetric sequence $\cA$. One can also define the notion of an algebra over an operad using the associated monad. We will take this point of view in this paper.

\begin{definition}An \emph{$\cO$-algebra structure} on an object $A$ of $\cat{C}$ is a morphism $a\colon \mathbf{O}(A) \to A$ satisfying associativity and unit axioms in the sense that the following two diagrams commute:
	\[\xymatrix{\mathbf{OO}(A) \ar[r]^{\mathbf{O}(a)} \ar[d]_c & \mathbf{O}(A) \ar[d]^a \\
		\mathbf{O}(A) \ar[r]_a & A} \qquad \qquad \xymatrix{A \ar@{=}[rd] \ar[r]^1 & \mathbf{O}(A) \ar[d]^a \\
		& A}\]
	An \emph{$\cO$-algebra} in $\cat{C}$ is an object with a fixed structure of an $\cO$-algebra.\end{definition}

Note that for any object $X$, $\mathbf{O} X$ is an $\cO$-algebra. Moreover, it is free, as $\mathbf{O}$ is the left adjoint to the forgetful functor from the category of $\cO$-algebras in $\cat{C}$ to $\cat{C}$. 

A right $\cO$-module structure on a symmetric sequence $\cF$ gives rise to a right functor structure on $\mathbf{F}$ over the monad $\mathbf{O}$, a notion which is defined as follows:

\begin{definition}A \emph{right functor} $\mathbf{F}$ over a monad $\mathbf{O}$ is a functor $\mathbf{F}\colon \cat{C} \to \cat{C}$ with a natural transformation $\colon \mathbf{F} \mathbf{O} \to \mathbf{F}$ satisfying associativity and unit axioms in the sense that the following two diagrams commute:
	\[\xymatrix{\mathbf{FOO} \ar[r]^{a} \ar[d]_{\mathbf{F}(c)} & \mathbf{FO} \ar[d]^a \\
		\mathbf{FO} \ar[r]_a & \mathbf{F}} \qquad \qquad \xymatrix{\mathbf{F} \ar@{=}[rd] \ar[r]^{\mathbf{F}(1)} & \mathbf{FO} \ar[d]^a \\
		& \mathbf{F}}\]\end{definition}

One reason to spell out this definition is that we will occasionally be interested in some right functors which do not come from right modules, such as the $n$-fold suspension functor viewed as a right functor over the little $n$-disks operad.

\subsubsection{Categories copowered over $\cat{Top}$} We next describe the contexts in which we will work the remainder of this paper. Let $\cat{C}$ be a symmetric monoidal category as before, i.e. with all colimits and small limits, and $\otimes$ preserving colimits. Further suppose $\cat{C}$ has a lax-monoidal copowering (also known as tensoring) over $\cat{Top}$, i.e. there is a functor $\odot\colon \cat{Top} \times \cat{C} \to \cat{C}$ and a natural transformation $\alpha\colon X \odot (Y \odot C) \to (X \times Y) \odot C$ satisfying associativity and unit axioms. We will only consider the following examples:
\begin{enumerate}[(i)]
	\item Topological spaces: $\cat{C} = \cat{Top}$, $X \odot Y = X \times Y$ and $X \odot (Y \odot C) \to (X \times Y) \odot C$ the identity.
	\item Simplicial sets: $\cat{C} = \cat{sSet}$, $X \odot Y = \mr{Sing}(X) \times Y$ and $ X \odot (Y \odot C) \to (X \times Y) \odot C$ induced by the natural isomorphism $\mr{Sing}(X) \times \mr{Sing}(Y) \cong \mr{Sing}(X \times Y)$.
	\item Chain complexes (over $\bZ$): $\cat{C} = \cat{Ch}$, $X \odot A = C_*(X) \otimes A$ and $X \odot (Y \odot C) \to (X \times Y) \odot C$ induced by the Eilenberg-Zilber map $\mr{EZ}\colon C_*(X) \otimes C_*(Y) \to C_*(X \times Y)$.
\end{enumerate} 

These examples have well-known model structures: the Quillen model structure on $\cat{Top}$, the Quillen model structure on simplicial sets, and the projective model structure on $\cat{Ch}$. This allows us to talks about weak equivalences and cofibrations.

\subsubsection{Bar constructions and homotopy-theoretic techniques} In this paper we will use several constructions (e.g. bar constructions or functors associated to symmetric sequences), and we will recall sufficient conditions for these constructions to preserve weak equivalences. These results are often well-known and we will apply them throughout the remainder of this paper.

We first discuss the homotopy invariance properties of functors associated to symmetric sequences. A symmetric sequence $\cF = \{F(k)\}_{k \geq 0}$ is said to be $\Sigma$-cofibrant if each $F(k)$ is cofibrant as an object with $\fS_k$-action (in the projective model structure). This is the case if $F(k)$ is cofibrant as an object of $\cat{C}$ and the action is free.

\begin{lemma}\label{lemweakequivfunctor} Suppose $\cF$ is $\Sigma$-cofibrant and $X$ is cofibrant, then $\mathbf{F}(X)$ is cofibrant. If $X \to Y$ is a weak equivalence between cofibrant objects of $\cat{C}$, then $\mathbf{F}(X) \to \mathbf{F}(Y)$ is a weak equivalence. Similarly, if $X$ is cofibrant and $\cF \to \cG$ is a weak equivalence between $\Sigma$-cofibrant symmetric sequence, then $\mathbf{F}(X) \to \mathbf{G}(X)$ is a weak equivalence.

Furthermore, when $\cat{C} = \cat{Top}$ and the action of $\fS_k$ on $F(k)$ and $G(k)$ are free, we can drop the cofibrancy assumptions on $X$, $Y$, and $F(k)$ and $G(k)$ for $k \geq 0$.\end{lemma}

\begin{proof}The first part follows from Lemma 11.5.2 and Proposition 11.5.3 of \cite{fressebook}.
	
For the second part, we note that for a free action of a finite group $G$ on space $Z$, the quotient map $Z \to Z/G$ is a covering map and hence a Serre fibration, see e.g. Lemma 79.1 and Theorem 81.5 of \cite{munkres} and note that the proof of 81.5 does not use the assumption of local path-connectedness in the proof that the quotient map is a covering map. Thus when the $\fS_k$-action on $F(k)$ is free, there is a long exact sequence of homotopy groups for the fiber sequence $\fS_k \to F(k) \times X^k \to F(k) \times_{\fS_k} X^k$. This can be combined with the five lemma to the prove the second part.
\end{proof}

To avoid repeating arguments, one may use the following lemma.

\begin{lemma}If for all $k \geq 0$ the action of $\mathfrak{S}_k$ on $\cO(k)$ is free, then for $X \in \cat{Top}$ or $\cat{sSet}$ there is a natural weak equivalence $\mathbf{F}(C_*(X)) \to C_*(\mathbf{F}(X))$.\end{lemma}

\begin{proof}The Eilenberg-Zilber map 
	\[C_*(\cF(k)) \otimes C_*(X)^{\otimes k} \to C_*(\cF(k) \times X^k)\]
 is a weak equivalence by the K\"unneth theorem. This map is $\fS_k$-equivariant and since the action of $\fS_k$ on $F(k)$ is free, both are levelwise free chain complexes of $\bZ[\fS_k]$-modules. Thus applying $-\otimes_{\bZ[\fS_k]} \bZ$ preserves the weak equivalence, e.g. by a K\"unneth spectral sequence argument. Now note that we have
	\begin{align*}(C_*(\cF(k)) \otimes C_*(X)^{\otimes k}) \otimes_{\bZ[\fS_k]} \bZ &\cong C_*(\cF(k)) \otimes_{\bZ[\fS_k]} C_*(X)^{\otimes k} \\
	C_*(\cF(k) \times X^k) \otimes_{\bZ[\fS_k]} \bZ &\cong C_*(\cF(k) \times_{\fS_k} X^k)\end{align*}
\end{proof}

We will next describe a simplicial object known as the \emph{double bar construction}.  For $\cat{C} = \cat{Top}$, $\cat{sSet}$, or $\cat{Ch}$, geometric realization of a simplicial object $C_\bullet$ in $\cat{C}$ is defined to be the coend over $\Delta$ of $[k] \mapsto \Delta^k$ and $[k] \mapsto C_k$. For $\cat{C} = \cat{sSet}$, this is naturally weakly equivalent to the diagonal of the bisimplicial set. For $\cat{C} = \cat{Top}$, this is the ordinary (thin) geometric realization. For $\cat{C} = \cat{Ch}$ this is naturally weakly equivalent to taking the total chain complex of the associated normalized double chain complex.


\begin{definition}\label{defbarconstruction} Let $\cO$ be an operad, $A$ an $\cO$-algebra and $ \mathbf{F}$ a right $\cO$-functor. Let $B_\bullet( \mathbf{F}, \mathbf{O},A)$ be the simplicial object in $\cat{C}$ given by
	\[[p] \mapsto  \mathbf{F}(\mathbf{O}^p(A))\]
	with face maps $B_p( \mathbf{F},\mathbf{O},A) \to B_{p-1}(\mathbf{F},\mathbf{O},A)$ induced by (i) for $d_0$ the natural transformation $ \mathbf{F} \mathbf{O} \to \mathbf{F}$, (ii) for $d_i$ with $0 < i < p$ the composition natural transformation $\mathbf{O}^2 \to \mathbf{O}$ applied to the $i$th and $(i+1)$st copy of $\mathbf{O}$, and (iii) the action map $\mathbf{O}A \to A$ for $d_p$. Similarly, the degeneracies are induced by the unit natural transformation $\mr{id} \to \mathbf{O}$. 
	
For $\cat{C} = \cat{Top}$, $\cat{sSet}$, or $\cat{Ch}$, we define 
\[B( \mathbf{F}, \mathbf{O},A) \coloneqq \left\vert B_\bullet( \mathbf{F}, \mathbf{O},A) \right\vert\]
with $|-|$ the (thin) geometric realization discussed above.
\end{definition}

We will now discuss the homotopy invariance properties of this construction, and a spectral sequence computing its homology.

For a simplicial object $X_\bullet$ in $\cat{C}$, the $p$th \emph{latching object} is defined to be $L_p X \coloneqq \mr{colim}_{[r] \hookrightarrow [p]} X_r$ where the colimit is over all injective maps $[r] \hookrightarrow [p]$ in $\Delta^\mr{op}$ not equal to $\mr{id}_{[p]}$. A simplicial object $X_\bullet$ in $\cat{C}$ is said to be \emph{Reedy cofibrant} if each map  $L_p X \to X_{p+1}$ is a cofibration. This implies each $X_p$ is cofibrant and is implied by all degeneracy maps being cofibrations. Also note all bisimplicial sets are Reedy cofibrant. If $\cat{C} = \cat{Top}$, a related definition is that of a \emph{proper} simplicial space: this is Reedy cofibrancy with respect to the Str\o m model category, i.e. $L_p X \to X_{p+1}$ is required to be a Hurewicz cofibration. This is implied by each $s_i \colon X_p \to X_{p+1}$ being a Hurewicz cofibration. Note there is no condition on the individual $X_p$, as all spaces are cofibrant in the Str\o m model structure.

\begin{lemma}\label{lemgeomrel} If $X_\bullet \to Y_\bullet$ is a levelwise weak equivalence between Reedy cofibrant simplicial objects in $\cat{C}$, then $|X_\bullet| \to |Y_\bullet|$ is a weak equivalence. When $\cat{C} = \cat{Top}$ and $X_\bullet \to Y_\bullet$ is a levelwise weak equivalence between proper simplicial objects in $\cat{C}$, then $|X_\bullet| \to |Y_\bullet|$ is also a weak equivalence.\end{lemma}

\begin{proof}This is discussed in Section A.2.9 of \cite{luriehtt}. For proper simplicial objects in $\cat{C} = \cat{Top}$, this is Proposition A.1 of \cite{segalcats}.\end{proof}

\begin{lemma}\label{lemgeomrealss} Associated to each Reedy cofibrant simplicial object or proper simplicial space $X_\bullet$ there is a \emph{geometric realization spectral sequence}
	\[E^1_{pq} = H_q(X_p) \Rightarrow H_*(|X_\bullet|)\]
There is also a relative version.
\end{lemma}

\begin{proof}This is the spectral sequence associated to the skeletal filtration, where the Reedy cofibrancy condition is used to identify the $E^1$-page. For proper simplicial spaces, Section 5 of \cite{Se2} describes the spectral sequence for the so-called fat geometric realization, and the Reedy cofibrancy condition implies the fat realization is weakly equivalent to the geometric realization by Proposition A.1 of \cite{segalcats}.
\end{proof}

\begin{lemma}\label{lemchainssimplicialobjects} If $X_\bullet$ is a Reedy cofibrant simplicial object in $\cat{C} = \cat{Top}$ or $\cat{sSet}$, or a proper simplicial object in $\cat{C} = \cat{Top}$, then $C_*(X_\bullet)$ is a Reedy cofibrant simplicial chain complex and there is a natural weak equivalence $|C_*(X_\bullet)| \to C_*(|X_\bullet|)$.\end{lemma}

\begin{proof}The map is induced by Eilenberg-Zilber maps, and that it is a weak equivalence is a consequence of spectral sequence comparison applied to the geometric realization spectral sequence. Also see Theorem 5.5.2.17 of \cite{lurieha}.
\end{proof}

Suppose that $\mathbf{F}$ comes from a symmetric sequence $\cF$ and $\cO$ is an operad. Unwinding the definitions, one deduces that $|B_\bullet(\mathbf{F},\mathbf{O},-)|$ preserves a weak equivalence $A \to B$ of $\cO$-algebras when $\cF(k)$ and $\cO(k)$ have free $\fS_k$-actions if $\cat{C}=\cat{sSet}$. If $\cat{C} = \cat{Ch}$, one additionally needs to demand that the underlying chain complexes of $A$, $B$, $\cO(k)$ and $\cF(k)$ are cofibrant. If $\cat{C} = \cat{Top}$ one additionally needs to demand that the inclusion $\{\mr{id}\} \hookrightarrow \cO(1)$ is a Hurewicz cofibration.

\subsection{Little disks operads, $\cdt$-algebras and charged algebras} \label{suboperads}  In this subsection we define the algebras that are the subject of this paper. As discussed in the previous two subsections, we shall always assume that $\cat{C} = \cat{Top}$, $\cat{sSet}$, or $\cat{Ch}$.

\subsubsection{Tangential structures} \label{subsubsectangent}  A \emph{tangential structure} is a map $\theta\colon W \to BO(n)$. As a convention, we will demand that $W$ is $1$-connected and $\theta\colon W \to BO(n)$ is a fibration. Recall that the classifying space of the $n$th orthogonal group $BO(n)$ has a \emph{universal vector bundle} $\gamma$ over it. A \emph{$\theta$-framing} of an $n$-dimensional manifold $M$ is a bundle map $\phi_M\colon TM \to \theta^* \gamma$, where for us by definition a bundle map is a fiberwise linear isomorphism. We fix once and for all a $\theta$-framing on $\bR^n$, which exists since $W$ is non-empty, and is essentially unique by Lemma \ref{lemthetaconn}, which uses that $W$ is 1-connected. 

Let $\mr{Bun}(TM,TN)$ denote the space of all bundle maps $TM \to TN$ with the compact-open topology, and $\mr{Map}([0,\infty),\mr{Bun}(TM,\theta^*\gamma))$ the space of paths with the compact-open topology.

\begin{definition}If $M$ and $N$ are $\theta$-framed manifolds, then $\mr{Bun}^{\theta}(TM,TN)$ is the space of triples 
\[\Phi = (\phi,\alpha,\varphi) \subset \mr{Bun}(TM,TN) \times [0,\infty) \times \mr{Map}([0,\infty),\mr{Bun}(TM,\theta^*\gamma))\]
of a bundle map $\phi\colon TM \to TN$, a locally constant function $\alpha \colon M \to [0,\infty)$ and continuous map $\varphi \colon [0,\infty) \to \mr{Bun}(TM,\theta^* \gamma)$ starting at $\phi_M$ and equal to $\phi_N \circ \phi$ on $[\alpha,\infty)$ (i.e. a Moore path).\end{definition}

This is homotopy equivalent to the homotopy fiber over $\phi_M$ of the map
\[\phi_N \circ - \colon \mr{Bun}(TM,TN) \to \mr{Bun}(TM,\theta^* \gamma)\]

\begin{definition}We define the space of $\theta$-framed embeddings as the pull back in the following diagram (where $\mr{Emb}$ has the $C^\infty$-topology):
\[\xymatrix{\mr{Emb}^\theta(M,N) \ar@{.>}[d] \ar@{.>}[r] & \mr{Bun}^\theta (TM,TN) \ar[d]\\
	\mr{Emb}(M,N) \ar[r] & \mr{Bun}(TM,TN)}\]
\end{definition}

\begin{lemma}\label{lemthetaconn} For any two $\theta$-framings $\phi_1$ and $\phi_2$ on $\bR^n$, the space $\mr{Emb}^\theta((\bR^n)_{\phi_1},(\bR^n)_{\phi_2})$ is path-connected.\end{lemma}

\begin{proof}For any two $\theta$-framed manifolds the map $\mr{Emb}^\theta(M,N) \to \mr{Emb}(M,N)$ is a Serre fibration, because it is the pullback of the Serre fibration $\mr{Bun}^\theta (TM,TN) \to \mr{Bun}(TM,TN)$. In the case that $M = N = \bR^n$, the map $\mr{Emb}(M,N) \to \mr{Bun}(TM,TN)$ is a weak equivalence. Thus it suffices to show that $\mr{Bun}^{\theta}((T\bR^n)_{\phi_1},(T\bR^n)_{\phi_2})$ is path-connected. It is weakly equivalent to the homotopy fiber of the map $\phi_2 \circ - \colon O(n) \to \mr{Fr}(\theta^* \gamma)$ at the $\theta$-framing $\phi_1$, the latter being the frame bundle of $\theta^* \gamma$. This is in turn weakly equivalent to the homotopy fiber of the map $* \to W$ (the base point does not matter since $W$ is $0$-connected), and hence to the based loop space $\Omega W$, which is path-connected since $W$ was assumed to be $1$-connected.\end{proof}

One can compose two $\theta$-framed embeddings $(f,\Phi)\colon M \to N$ and $(g,\Psi)\colon N \to P$ by composing the embeddings to $g \circ f$, and concatenating the triples $\Phi = (\phi,\alpha,\varphi)$ and $\Psi = (\psi,\beta,\upsilon)$ as follows. One composes the bundle maps to obtain a bundle $\psi \circ \phi \colon TM \to TP$, takes the locally constant function $\beta \circ f + \alpha \colon M \to [0,\infty)$ and takes the path of bundle maps $TM \to \theta^*\gamma$ to be
\[t \mapsto \begin{cases} \varphi(t) & \text{if $t \in [0,\alpha]$} \\
\upsilon(t-\alpha) \circ \phi & \text{if $t \in [\alpha,\alpha+\beta]$} \\
\phi_P \circ \psi \circ \phi & \text{if $t \in [\alpha+\beta,\infty)$}
\end{cases}\]
This composition is associative. Composition with $\mr{id} \colon M \to M$ with the length $0$ Moore path at $\phi_M$ is the identity. 

We can then define a category of $\theta$-framed manifolds and embeddings.

\begin{definition} Let $\cat{Emb}^{\theta}$ denote the topological category whose objects are $\theta$-framed manifolds and whose morphism spaces are spaces of $\theta$-framed embeddings. Disjoint union gives $\cat{Emb}^{\theta}$ the structure of a symmetric monoidal category. On morphisms $(f,\Phi) \colon M \to N$ and $(g,\Psi) \colon P \to Q$, it is given by the map $(f \sqcup g,\Phi \sqcup \Psi)$, with $\Phi \sqcup \Psi$ defined by requiring its restriction to $M$ and $P$ to be $\Phi$ and $\Psi$ respectively. 
	
	
\end{definition}

The reason for requiring the length $\alpha$ of our Moore path to be locally constant on $M$ instead of constant, is to make $(-\sqcup-)$ a functor.


\begin{example}The examples of tangential structures relevant to this paper are:
	
	\begin{itemize}
		\item  the \emph{unoriented} tangential structure $\pi_O\colon BO(n) \m BO(n)$ (note here $W=BO(n)$ is not $1$-connected),
		\item the \emph{oriented} tangential structure $\pi_{SO}\colon EO(n)/SO(n) \to BO(n)$, and
		\item the \emph{framed} tangential structure $\pi_\mr{pt}\colon EO(n) \to BO(n)$.
	\end{itemize} For $\pi_O$, we have $\mr{Bun}^{O}(TM,TN) \simeq \mr{Bun}(TM,TN)$ and $\mr{Emb}^{O}(M,N) \simeq \mr{Emb}(M,N)$. 	One can always forget a $\theta$-framing to a $\pi_O$-framing, using the canonical map $W \to BO(n)$ over $BO(n)$ to induce natural maps $\mr{Bun}^{\theta}(TM,TN) \m \mr{Bun}^{O}(TM,TN)$ and $\mr{Emb}^{\theta}(M,N) \m \mr{Emb}^{O}(M,N)$.\end{example}

\subsubsection{$\cdt$-algebras} The operad of $\theta$-framed little $n$-disks will be defined in terms of $\theta$-framed embeddings of copies of $\bR^n$ into $\bR^n$.

\begin{definition} The \emph{$\theta$-framed little $n$-disks operad} $\cdt$ is the operad in the category $(\cat{Top},\times,*)$ given by $\cdt(k) = \mr{Emb}^\theta(\sqcup_{k} \bR^n,\bR^n)$. The symmetric group acts by permuting the Euclidean spaces in the domain, the operad composition map is given by composition of $\theta$-framed embeddings as described in the previous subsection, and the unit in $\cdt(1)$ is $\mr{id}\colon \bR^n \to \bR^n$ with Moore path at $\phi_{\bR^n}$ of length 0. \end{definition}

\begin{definition}Let $\bdt$ denote the monad in $\cat{C}$ associated to the operad $\cdt$.\end{definition}

The operad associated to the tangential structure $\pi_{SO}\colon EO(n)/SO(n) \to BO(n)$, $\cdtf$, is called the \emph{framed little $n$-disks operad}, and the operad associated to the tangential structure $\pi_\mr{pt}\colon EO(n) \to BO(n)$, $\cdtt$, is called the \emph{little $n$-disks operad}. 

\begin{remark}The nomenclature surrounding little disks operad can be confusing. We give some clarifications:
	
\begin{itemize}
	\item It is standard but confusing that the framed little $n$-disks operad consists of orientation-preserving embeddings while the little $n$-disks operad consists of the framed embeddings.
	\item Some authors use a version of the framed little $n$-disks operad which we denote $(\cdtt)^\mr{rect}$, given by taking the subspace of $\mr{Emb}(\sqcup_{k} D^n,D^n)$ consisting of the orientation-preserving embeddings that are a composition of translation, dilation and rotations. Identifying $\mr{int}(D^n)$ with $\bR^n$ induce a weak equivalence of operads $(\cdtt)^\mr{rect} \to \cdtt$, which can used to obtain from every $(\cdtt)^\mr{rect}$-algebra a weakly equivalent $\cdtt$-algebra by double bar construction $B(\bdtt,(\bdtt)^\mr{rect},-)$. To this algebra one can then apply the results of this paper.
	\item Some authors define the framed little $n$-disks operad using embeddings that do not necessarily preserve the orientation, i.e. they use $BO(n)$ instead of $BSO(n)$. Our proof does not work for this definition, cf.\ part (ii) of Remark \ref{remgener}.
	\item Finally, there is a distinction between the unital and non-unital $\cdt$-operads, which differ in whether $\cdt(0)$ is $\ast$ or $\varnothing$. Ours is the former.\end{itemize}\end{remark}

\subsubsection{Charged algebras}  \label{subsubcharged} We discuss algebra with a generalization of charge as discussed in the introduction.

\begin{definition}\label{defcharge} A \emph{partial monoid of charges} is an abelian cancellative partial monoid $C$ with unit.\end{definition}

We fix such a partial monoid $C$ of charges. The reader may want to keep in mind the example $C = \bN_0$, the non-negative integers under addition. Partial monoids that are not monoids will not be needed until Section \ref{subseccells}. We next define $C$-charged algebras to remedy the fact connected components do not make sense for chain complexes.  



\begin{definition}  \begin{enumerate}[(i)]
\item A \emph{$C$-charged space} is a space $X$ with a decomposition $X = \bigsqcup_{\bc \in C}X(\bc)$. A morphism of $C$-charged spaces is a continuous map preserving the decomposition.
\item A \emph{$C$-charged simplicial set} is a simplicial $X$ with a decomposition $X = \bigsqcup_{\bc \in C}X(\bc)$. A morphism of $C$-charged spaces is a continuous map preserving the decomposition.
\item A \emph{$C$-charged chain complex} is a non-negatively graded chain complex $A$ with a decomposition $A = \bigoplus_{\bc \in C} A(\bc)$ as chain complexes. A morphism of $C$-charged chain complexes is a chain map preserving the decomposition.
\end{enumerate}
\end{definition}

If $x \in X$ is an element of $X(\bc)$ or $a \in A$ is an element of $A(\bc)$, then we say it has \emph{charge} $\bc$. There is a $C$-charged space $C$ with a single point in each charge, and a $C$-charged chain complex $\bZ[C]$ with a single $\bZ$-summand in each charge. We will also view $C$ as a simplicial set by taking $C$ at every level and taking all maps to be isomorphisms. This is naturally a $C$-charged simplicial set. 

\begin{definition}\begin{enumerate}[(i)]
\item An \emph{augmented} $C$-charged space $X$ is a $C$-charged space $X$ with a map $\epsilon\colon X \to C$ of $C$-charged spaces, which we call the \emph{augmentation map}. We denote the category of augmented $C$-charged spaces by $\cat{Top}_C$.
\item An \emph{augmented} $C$-charged simplicial set $X$ is a $C$-charged simplicial set $X$ with a map $\epsilon\colon X \to C$ of $C$-charged simplicial sets, which we call the \emph{augmentation map}. We denote the category of augmented $C$-charged simplicial sets by $\cat{sSet}_C$.

\item An \emph{augmented} $C$-charged chain complex $A$ is a $C$-charged chain complex $A$ with a map $\epsilon\colon A \to \bZ[C]$ of $C$-charged chain complexes, which we call the \emph{augmentation map}. We denote the category of augmented $C$-charged spaces by $\cat{Ch}_C$.
\end{enumerate}\end{definition}

We remark that any $C$-charged space or simplicial set has a unique augmentation, but this is not the case for $C$-charged chain complexes. Conversely, from a map $\epsilon \colon X \to C$ one can recover the structure of a $C$-charged space or simplicial set on $X$, but this is not the case for $C$-charged chain complexes.

\begin{definition}\begin{enumerate}[(i)]
\item An augmented $C$-charged space $X$ is said to be \emph{connected} if $\epsilon$ induces a $\pi_0$-isomorphism.
\item An augmented $C$-charged simplicial set $X$ is said to be \emph{connected} if $\epsilon$ induces a $\pi_0$-isomorphism.
\item An augmented $C$-charged chain complex $A$ is said to be \emph{connected} if each chain complex $A(\bc)$ is $0$ in negative homological degrees and $\epsilon$ induces a $H_0$-isomorphism.
\end{enumerate}\end{definition}

The chains on a (augmented) $C$-charged space have the structure of a (augmented) $C$-charged chain complex. Augmented $C$-charged spaces, simplicial sets or chain complexes are symmetric monoidal categories using Day convolution applied to the Cartesian and tensor products respectively: $(X \times Y)(\bc) = \bigsqcup_{\bc'+\bc''=\bc} X(\bc') \times X(\bc'')$ and $(A \otimes B)(\bc) = \bigoplus_{\bc'+\bc''=\bc} A(\bc') \otimes B(\bc'')$. The new augmentations are obtained as compositions $X \times Y \to C \times C \to C$ and $A \otimes B \to \bZ[C] \otimes \bZ[C] \to \bZ[C]$, with both right-hand maps coming from the partial monoid structure of $C$.

The operad $\cdt$ can be considered as an operad in $C$-charged spaces, by declaring all elements to have charge $0$. Note that the forgetful functor from $C$-charged spaces, simplicial sets or chain complexes to spaces, simplicial sets or chain complexes is only symmetric monoidal if $C$ is a monoid. Thus, $\cdt$-algebras in $C$-charged spaces, simplicial sets or chain complexes are not necessarily $\cdt$-algebras in the underlying category. 

\begin{definition}\begin{enumerate}[(i)]
\item A \emph{$C$-charged $\cdt$-algebra in spaces} is a connected augmented $C$-charged space $X$ with the structure of an algebra over $\cdt$ in $\cat{Top}_C$.
\item A \emph{$C$-charged $\cdt$-algebra in simplicial sets} is a connected augmented $C$-charged simplicial set $X$ with the structure of an algebra over $\cdt$ in $\cat{sSet}_C$.
\item  A \emph{$C$-charged $\cdt$-algebra in chain complexes} is a connected augmented $C$-charged chain complex $A$ with the structure of an algebra over $C_*(\cdt)$ in $\cat{Ch}_C$.
\end{enumerate}\end{definition}

To make constructions in this paper homotopy invariant, we will need to impose some cofibrancy conditions on our $C$-charged $\cdt$-algebra in chain complexes.

\begin{definition}A $C$-charged $\cdt$-algebra in chain complexes $A$ is \emph{cofibrant} if the underlying chain complex $A$ is cofibrant.
\end{definition}
	
Note that if $X$ is a $C$-charged $\cdt$-algebra in spaces or simplicial sets, then $C_*(X)$ is cofibrant. Unless mentioned otherwise, charged algebras will be assumed to be cofibrant.
	
\subsubsection{Homological stability for charged algebras}

Let $A$ be a $C$-charged $\cdt$-algebra in chain complexes. The augmentation gives a preferred choice of a generator of $H_0(A(\bc_0))$. Assume  $\bc+\bc_0$ is defined in $C$ and pick a representative $a \in A(\bc_0)$ of this generator. We can define a stabilization map 
\[t_{\bc_0}\colon H_*(A(\bc)) \to H_*(A(\bc+\bc_0))\] 
by multiplying with the homology class of $a$ using the ring structure on $H_*(A)$ induced by the $\cdt$-algebra structure. If $n \geq 2$, the ring structure is commutative, and the map $t_{\bc_0}$ is uniquely determined by the choice of augmentation. The singular chains on a $C$-charged $\cdt$-algebra $X$ in spaces are a $C$-charged $\cdt$-algebra in chain complexes, so this definition also gives us a stabilization map on the homology of $X$. 

We now define homological stability when $C = \bN_0^d$, whose elements we denote by $\bk$. We let $\be_i$ denote the basis vector $(0,\ldots,0,1,0,\ldots,0)$ (1 is in the $i$th position) and call $t_i \coloneqq t_{\be_i}$ a \emph{basic stabilization map}.

\begin{definition}
A map $f\colon X \to Y$ of chain complexes, simplicial sets, or spaces is a called an $N$-equivalence if the relative homology group $H_*(Y,X)$ vanishes for $* \leq N$. Equivalently $f$ induces an isomorphism on homology groups $H_*$ for $* \leq N-1$ and a surjection for $* \leq N$.
\end{definition}

\begin{definition}Let $C = \bN_0^d$. \label{defhomstab}
\begin{enumerate}[(i)]
\item Let $A$ be a $C$-charged $\cdt$-algebra in chain complexes, then we say that $A$ \emph{has homological stability} if there is a function $\rho\colon \bN_0 \to \N_0$ with $\lim_{j \to \infty} \rho(j) =\infty$ such that all basic stabilization maps $t_i\colon H_*\left(A(\bk)\right) \to H_*\left(A(\bk+\be_i)\right)$ are $\rho(k_i)$-equivalences.
\item Let $X$ be a $C$-charged $\cdt$-algebra in spaces or simplicial sets, then we say that $X$ \emph{has homological stability} if $C_*(X)$ has.\end{enumerate}\end{definition}

\begin{remark}We make some remarks regarding this definition: \begin{itemize}

\item In the case $C = \bN_0$, we can remove reference to a function $\rho$ and instead say that all basic stabilization maps induce isomorphisms on $H_*$ for $k$ large with respect to $*$.
\item This is not the most general definition of homological stability that one could give. It is not enough to demand that iterated stabilization maps are eventually isomorphisms, because we will need uniformity in the $d$ different directions in $\bN_0^d$. However, one can use different $\rho$ for each direction and make these depend on all coordinates of $\bk$ instead of just a single one. 
\item To give a quantitative version of the main theorem, we will impose extra conditions on the function $\rho$ (see Theorem \ref{thmmainrange}). These extra conditions are easier to state if one extends the domain and codomain of $\rho$ from $\bN_0$ to $\R$.


\end{itemize}\end{remark}

\subsection{Topological chiral homology} In this subsection we will define topological chiral homology, first for $\cdt$-algebras on $\theta$-framed manifolds but eventually for partial $\cdt$-algebras on $\theta$-framed manifold bundles. We will also discuss homological stability and a useful spectral sequence. As before $\cat{C} = \cat{sSet}$ or $\cat{Ch}$, and $C$-charged $\cdt$-algebras in $\cat{Ch}$ are assumed to be cofibrant.

\subsubsection{Topological chiral homology}\label{subsubtch}
To define topological chiral homology, we define the following right $\bdt$-functor.

\begin{definition}\label{defmtheta}Suppose $M$ is a $\theta$-framed manifold. Let $\mathbf{M}^\theta$ be the right functor over $\bdt$ given by
\[C \mapsto \bigsqcup_{k \geq 0} \mr{Emb}^\theta(\sqcup_k \bR^n,M) \odot_{\fS_k} C^{\otimes k}.\] The right functor structure is induced by composition of $\theta$-framed embeddings. 
\end{definition}


We will now define topological chiral homology of an $\cdt$-algebra $A$ in $\cat{C}$ over a $\theta$-framed manifold $M$ using the two-sided bar construction of Definition \ref{defbarconstruction}. This is one of many equivalent models for topological chiral homology and is a concrete instance of the homotopy coend in Definition 3.2 of \cite{Fr2}. One could also check that this model satisfies the axioms in Theorem 1.1 of \cite{Fr2}.

\begin{definition}\label{deftch}We define the \emph{topological chiral homology} $\int_M A$ of $A$ over $M$ to be the geometric realization
	\[\int_M A \coloneqq  B(\mathbf{M} ^\theta,\bdt,A).\]
\end{definition}

We remark that $\mathbf{M}^\theta$ is an enriched functor from the category $\cat{Emb}^\theta$ of $\theta$-framed manifolds with spaces of $\theta$-framed embeddings as spaces of morphisms, to the category of right $\bdt$-functors in $\cat{C}$. This makes sense as $\cat{C}$ has a copowering over $\cat{Top}$. The following is one of the axioms given in \cite{Fr2}.

\begin{lemma}\label{lemtchrn} We have that $\int_{\bR^n} A \simeq A$.\end{lemma}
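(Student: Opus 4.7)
The plan is to reduce the statement to the classical fact that the monadic two-sided bar construction $B_\bullet(\mathbf{O},\mathbf{O},A)$ resolves $A$ whenever $A$ is an algebra over a monad $\mathbf{O}$.

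First I would identify the right functor $\mathbf{M}^\theta$ with the monad $\bdt$ itself when $M = \bR^n$. Unwinding Definition \ref{defmtheta} and comparing with the definition of $\bdt$, we see that
\[\mathbf{M}^\theta(C) = \bigsqcup_{k\ge 0} \mr{Emb}^\theta(\sqcup_k \bR^n, \bR^n)\odot_{\fS_k} C^{\otimes k} = \bigsqcup_{k\ge 0} \cdt(k)\odot_{\fS_k} C^{\otimes k} = \bdt(C),\]
and the right $\bdt$-functor structure on $\mathbf{M}^\theta$, induced by composition of $\theta$-framed embeddings, agrees with the monad multiplication $c:\bdt\bdt \to \bdt$. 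Hence the simplicial object $B_\bullet(\mathbf{M}^\theta,\bdt,A)$ coincides with $B_\bullet(\bdt,\bdt,A)$, whose $p$-simplices are $\bdt^{p+1}(A)$.

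Next I would use the standard extra-degeneracy argument. The augmented simplicial object $B_\bullet(\bdt,\bdt,A) \to A$, with augmentation given by the algebra structure map $a:\bdt(A)\to A$, admits an extra degeneracy $s_{-1}: \bdt^{p+1}(A) \to \bdt^{p+2}(A)$ defined by applying the unit $1:\mr{id}\to\bdt$ in the leftmost $\bdt$-slot. The monad axioms (associativity and unit) give that $s_{-1}$ commutes with the face and degeneracy maps in the usual way, so the augmented object is split. It is a classical consequence that a split augmented simplicial object in either $\cat{Top}$ or $\cat{Ch}$ has geometric realization weakly equivalent to the augmentation target (in $\cat{Top}$ one has a simplicial contraction after realization; in $\cat{Ch}$ the extra degeneracy produces an explicit chain homotopy on the totalization). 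Therefore
\[\int_{\bR^n} A = ||B_\bullet(\mathbf{M}^\theta,\bdt,A)|| = ||B_\bullet(\bdt,\bdt,A)|| \simeq A,\]
as desired.

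The only potentially delicate point is the equivalence produced by a split augmented simplicial object in the specific symmetric monoidal category $\cat{C}$ under consideration; but both $\cat{Top}$ and $\cat{Ch}$ are set up in Subsection \ref{suboperads} precisely so that geometric realization makes sense via the coend construction, and in both cases the extra-degeneracy contraction passes through realization to give the desired weak equivalence. No charge-grading issues arise because the identification $\mathbf{M}^\theta = \bdt$ and the extra degeneracy are both manifestly compatible with the $C$-grading.
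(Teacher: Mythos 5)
Your proposal is correct and follows exactly the route the paper takes: identify $\mathbf{M}^\theta$ with $\bdt$ when $M = \bR^n$, then use the extra degeneracy coming from the monad unit to contract $||B_\bullet(\bdt,\bdt,A)||$ onto $A$. The paper states this in two lines; your write-up just spells out the same argument in more detail.
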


\begin{proof} For $M=\R^n$, we have that $\mathbf{M} ^\theta=\bdt$. The unit of the monad endows the simplicial object $B_\bullet(\bdt,\bdt,A)$ with an extra degeneracy.\end{proof}

Another basic property is topological chiral homology preserves $N$-equivalences.

\begin{lemma}\label{lemtchcomp}Let $A \to B$ is a map of $\cdt$-algebras that induces an isomorphism on homology. Then the induced map $\int_M A \to \int_M B$ is also an isomorphism on $H_*$. More generally, if $f\colon A \to B$ is an $N$-equivalence then so is $\int_M A \to \int_M B$.\end{lemma}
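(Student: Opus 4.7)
The plan is to establish the more general $N$-equivalence statement, of which the isomorphism case follows by letting $N \to \infty$. I would argue levelwise on the simplicial bar construction $B_\bullet(\mathbf{M}^\theta, \bdt, -)$ defining topological chiral homology, then pass to geometric realization.

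First, I would observe that the induced map $B_\bullet(\mathbf{M}^\theta, \bdt, A) \to B_\bullet(\mathbf{M}^\theta, \bdt, B)$ of simplicial charged chain complexes is, at simplicial degree $p$, the map
\[
\mathbf{M}^\theta(\bdt^p(A)) \longrightarrow \mathbf{M}^\theta(\bdt^p(B)).
\]
Both $\bdt$ and $\mathbf{M}^\theta$ are assembled out of building blocks of the form $C_*(X) \otimes_{\fS_k} (-)^{\otimes k}$, where $X$ is either $\cdt(k)$ or $\mr{Emb}^\theta(\sqcup_k \bR^n, M)$. So the levelwise claim reduces to showing each such building block takes $N$-equivalences between connected charged chain complexes to $N$-equivalences.

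For the tensor power $(-)^{\otimes k}$, I would invoke the Künneth theorem together with the connectivity assumption $H_0(A(\bc)) \cong H_0(B(\bc)) \cong \bZ$: in any fixed charge $\bc$, the tensor product $A^{\otimes k}(\bc) = \bigoplus_{\bc_1 + \cdots + \bc_k = \bc} A(\bc_1) \otimes \cdots \otimes A(\bc_k)$ is a finite sum, and in each summand at least the factors of charge $\bc_i = 0$ behave like a unit in low degrees. Together with the usual argument that the tensor product of $N$-equivalences between $0$-connected complexes is again an $N$-equivalence, this gives that $A^{\otimes k} \to B^{\otimes k}$ is an $N$-equivalence in each fixed charge. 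Tensoring with $C_*(X)$, which is degreewise free abelian, and taking $\fS_k$-coinvariants (harmless because $\fS_k$ acts freely on the configuration components of $X$, and one may replace by a cofibrant resolution if necessary) preserves $N$-equivalences.

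Finally, I would pass from a levelwise $N$-equivalence of simplicial charged chain complexes to an $N$-equivalence on realizations via the skeletal filtration: the associated bicomplex spectral sequence has $E^1_{p,q} = H_q(B_p(\mathbf{M}^\theta, \bdt, A)(\bc))$, and a levelwise $N$-equivalence induces an isomorphism on $E^1_{p,q}$ in the range $p + q \leq N - 1$ and a surjection for $p + q = N$, hence the same on the abutment. The main technical obstacle is the treatment of $\fS_k$-coinvariants and the Künneth-type argument in each charge; everything else is essentially formal manipulation of the bar construction.
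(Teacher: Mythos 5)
Your overall strategy is the same as the paper's: both proofs work levelwise on the bar construction $B_\bullet(\mathbf{M}^\theta,\bdt,-)$ and then pass to the realization via the skeletal filtration spectral sequence (the paper phrases this with the \emph{relative} spectral sequence, showing the relative $E^1$-page vanishes for $q \leq N$, which is equivalent to your comparison of absolute spectral sequences). Where you diverge is in the justification of the levelwise statement that $\mathbf{M}^\theta$ and $\bdt$ preserve $N$-equivalences: the paper deduces it from a Serre-type spectral sequence comparison for the ``fibration'' $\mathbf{M}^\theta(X) \to \mathbf{M}^\theta(\mr{pt})$ (and its Dress-style chain-level analogue in Lemma \ref{lemssserrefib}), whereas you argue purely algebraically, via a K\"unneth argument for the tensor powers and the observation that $\fS_k$ acts freely on $\cdt(k)$ and on $\mr{Emb}^\theta(\sqcup_k\bR^n,M)$, so that $C_*(X)$ is a bounded-below complex of free $\bZ[\fS_k]$-modules and $C_*(X)\otimes_{\fS_k}-$ preserves $N$-equivalences. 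That freeness observation is correct and is exactly what makes the coinvariants harmless; your route is more self-contained in the chain-complex setting, while the paper's fibration argument is what generalizes directly to the topological case.

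One caveat on the K\"unneth step: the assertion that a tensor power of an $N$-equivalence of connected, non-negatively graded complexes of abelian groups is again an $N$-equivalence is false for the \emph{underived} tensor product without a flatness hypothesis. For instance, adding a $\bZ$ in degree $0$ to the quasi-isomorphism $(\bZ \xrightarrow{2} \bZ) \to \bZ/2$ (placed in degrees $2,1$ and $1$ respectively) gives a quasi-isomorphism of connected complexes whose tensor square has an extra $\mr{Tor}(\bZ/2,\bZ/2)$-class in $H_3$ of the source, so the square is only a $3$-equivalence. Thus your step needs $A$ and $B$ degreewise flat (as holds for singular chains and for the cellular algebras appearing in the paper), or a derived reading of the tensor products; your parenthetical about cofibrant resolutions addresses the coinvariants, where it is not needed, rather than this point, where it is. To be fair, the paper's own two-line justification of the levelwise step requires the same implicit hypothesis, so this is a shared imprecision rather than a defect of your route, but it is the one place where your written argument would fail as stated in full generality.
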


\begin{proof}By Lemma \ref{lemchainssimplicialobjects} we may assume $\cat{C} = \cat{Ch}$. Since we constructed topological chiral homology by geometrically realizing a Reedy cofibrant simplicial object, Lemma \ref{lemgeomrealss} says there is a geometric realization spectral sequence converging to the relative homology $H_*\left(\int_M B,\int_M A\right)$ with $E^1$-page given by the relative homology
\[E^1_{p,q} = H_q\left(\mathbf{M}^\theta(\bdt)^p(B),\mathbf{M}^\theta(\bdt)^p(A)\right)\]
The lemma follows if the $E^1$ page vanishes for $q \leq N$, and to prove this we prove a version of Lemma \ref{lemweakequivfunctor} to see that if $f\colon X \m Y$ is an $N$-equivalence, then so are the maps $\mathbf{M}^\theta(X) \m \mathbf{M}^\theta(Y)$ and $\bdt(X) \m \bdt(Y)$. The functor $\bdt$ is a special case $M = \bR^n$ of the functor $\mathbf{M}^\theta$. Recall the definition
\[\mathbf{M}^\theta(X) = \bigoplus_{k \geq 0}  C_*(\mr{Emb}^\theta(\sqcup_k \bR^n,M)) \otimes_{\bZ[\mathfrak{S}_k]} X^{\otimes k}\]
Since the action of $\mathfrak{S}_k$ on $\mr{Emb}^\theta(\sqcup_k \bR^n,M)$ is free, $C_*(\mr{Emb}^\theta(\sqcup_k \bR^n,M))$ is a levelwise free chain complex of $\bZ[\mathfrak{S}_k]$-modules. If $C_\ast$ is a levelwise free chain complexes of $\bZ[\mathfrak{S}_k]$-modules, then we have that $C_\ast \otimes_{\bZ[\mathfrak{S}_k]} (-) \colon \cat{Ch}_{\bZ[\mathfrak{S}_k]} \to \cat{Ch}$ preserves weak equivalences and $N$-equivalences. This can be proven using the K\"unneth spectral sequence. By the K\"unneth theorem $X^{\otimes k} \to Y^{\otimes k}$ is an $N$-equivalence if $X \to Y$ is, which proves the desired result.\end{proof}

\subsubsection{Topological chiral homology of partial $\cdt$-algebras and completions} \label{subsecpartialalg}

In later sections, we will construct $\cdt$-algebras by adding generators and relations to given $\cdt$-algebras. This is done by creating an intermediate object called a partial $\cdt$-algebra, and then freely completing the result to obtain an $\cdt$-algebra, a procedure described in this subsection.

\begin{definition}\label{defpartialbarconstruction} A \emph{structure of a partial $\cdt$-algebra} on an object $A$ of $\cat{C}$ is a simplicial object $A_\bullet$ with the following properties:
	\begin{enumerate}[(i)]
		\item There exists a subobject $\mr{Comp}_p \subset (\bdt)^p(A)$ so that $A_p = \bdt(\mr{Comp}_p)$.
		\item We have that $\mr{Comp}_0 = A$.
		\item If $p \geq 1$ and $0 \leq i < p$, then the face map $d_i \colon \bdt(\mr{Comp}_p) \to \bdt(\mr{Comp}_{p-1})$ is the restriction of the map \[(\bdt)^i(c_{(\bdt)^{p-i-1}(A)}) \colon (\bdt)^{p+1}(A) \to (\bdt)^{p}(A)\] with $c$ the composition natural transformation of the monad $\bdt$.
		\item If $p \geq 0$ and $0 \leq j \leq p$, the degeneracy map $s_j \colon \bdt(\mr{Comp}_p) \to \bdt(\mr{Comp}_{p+1})$ is the restriction of the map \[(\bdt)^{j+1}(1_{(\bdt)^{p-j}(A)}) \colon (\bdt)^{p+1}(A) \to (\bdt)^{p+2}(A)\] with $1$ the unit natural transformation of the monad $\bdt$.
	\end{enumerate}
\end{definition}

We denote the geometric realization $|A_\bullet|$ in $\cat{C}$ by $\bar{A}$. When $\cat{C} = \cat{Ch}$, we will again assume that $A_\bullet$ is levelwise cofibrant. In all relevant examples appearing in this paper, this is the case if $A$ is. Since $A_\bullet$ is an $\cdt$-algebra in simplicial objects in $\cat{C}$, $\bar{A}$ is also an $\cdt$-algebra. Furthermore, there is a canonical map $A \to \bar{A}$ given by including $A$ as $\mr{id} \odot A$ viewed as $0$-simplices of $\bar{A}$.

\begin{definition}We call the $\cdt$-algebra $\bar{A}$ the \emph{completion} of $A_\bullet$ (or the completion of $A$, when the partial $\cdt$-algebra structure on $A$ is implicit).\end{definition}

\begin{example}\label{examptrivialpartial} If $A$ is $\cdt$-algebra, then it in particular can be considered as a partial $\cdt$-algebra $A_\bullet$ by taking $A_\bullet = B_\bullet(\bdt,\bdt,A)$ with face maps induced by the operad composition and the $\cdt$-algebra structure on $A$. In this case we have that $\mr{Comp}_p = (\bdt)^p(A)$. We claim its completion is canonically weakly equivalent to $A$ as a $\cdt$-algebra. The algebra structure map $\bdt(A) \to A$ induces an augmentation map $A_\bullet \to A$. This induces a weak equivalence upon geometric realization, because the outermost unit maps $(\bdt)^{p+1}(A) \to (\bdt)^p(A)$ give $A_\bullet \to A$ an extra degeneracy, and hence $\bar{A} = |A_\bullet| \to A$ is a weak equivalence.

\end{example}

\begin{example}\label{exampcomp}In all cases that we will consider, the partial $\cdt$-algebra on $A$ will be obtained by constructing $\mr{Comp}_p$ out of two pieces of data: (a) a subobject $\mr{Comp}_1 \subset \bdt(A)$, and (b) a map $c_1 \colon \mr{Comp}_1 \to \mr{Comp}_0 = A$ such that $c_1$ is associative and unital in the following sense:
\begin{enumerate}[(i)]
\item  if $\mr{Comp}_2, \mr{Comp}_2' \subset (\bdt)^2(A)$ are defined as the pull backs \[\xymatrix{\mr{Comp}_2 \ar@{.>}[r] \ar@{.>}[d]_{c_2} & \bdt(\mr{Comp}_{1}) \ar[d]^{\bdt(c_1)}\\
\mr{Comp}_1 \ar[r]_{i} & \bdt(A)} \qquad \xymatrix{\mr{Comp}_2' \ar@{.>}[r] \ar@{.>}[d]_{\tilde{c}_2'} & (\bdt)^2(A) \ar[d]^{c}\\
\mr{Comp}_1 \ar[r]_{i} & \bdt(A)}\] 
where $i$ is the inclusion and $c \colon (\bdt)^2 \to \bdt$ the monad composition natural transformation, then we have $\mr{Comp}_2 \subset \mr{Comp}_2'$ and $c_1 \circ c_2 = c_1 \circ \tilde{c}_2'$ on $\mr{Comp}_2$. 
\item $\mr{id} \odot A \subset \mr{Comp}_1$ and following diagram commutes
\[\xymatrix{A \ar[r]^-{\mr{id} \odot -}  \ar@{=}[rd] & \mr{Comp}_1 \ar[d]^{c_1} \\
 & A}\]

 \end{enumerate} 




We explain how to obtain $\mr{Comp}_p$ from this. Suppose we have defined $\mr{Comp}_{p-1} \subset \bdt(\mr{Comp}_{p-2})$ and a map $c_{p-1}\colon \mr{Comp}_{p-1} \to \mr{Comp}_{p-2}$, then $\mr{Comp}_p \subset \bdt(\mr{Comp}_{p-1})$ and $c_p \colon \mr{Comp}_p \to \mr{Comp}_{p-1}$ are obtained via the pull back
\[\xymatrix{\mr{Comp}_p \ar@{.>}[r] \ar@{.>}[d]_{c_p} & \bdt(\mr{Comp}_{p-1}) \ar[d]^{\bdt(c_{p-1})}\\
	\mr{Comp}_{p-1} \ar[r]_-{i} & \bdt(\mr{Comp}_{p-2})}\]
with $i$ the inclusion. Since $\bE_n^\theta$ preserves pullbacks, as it is built out of coproducts, quotients by group actions and limits, we see that we can also describe $\mr{Comp}_p$ as the pullback
\[\xymatrix{\mr{Comp}_p \ar@{.>}[r] \ar@{.>}[d] & (\bdt)^{p-1}(\mr{Comp}_1) \ar[d]^{(\bdt)^{p-1}(c_1)} \\
\mr{Comp}_{p-1} \ar[r]_-i & (\bdt)^{p-2}(\mr{Comp}_1)}\]
Using this we can describe $\mr{Comp}_p$ as those elements of $(\bdt)^{p-1}(\mr{Comp}_1)$ such that applying $c_1$ in the innermost most position $(p-2)$ times we obtain an element of $\mr{Comp}_1$, and $c_p$ as the restriction of $(\bdt)^{p-1}(c_1)$.

The last description makes clear that Example \ref{examptrivialpartial} is a special case of this construction. One declares all elements to be composable by taking $\mr{Comp}_1 = \bdt(A)$ and $c \colon \mr{Comp}_1 = \bdt(A) \to A$ to be given by the $\cdt$-algebra structure on $A$.

The next lemma describes how to obtain a partial $\cdt$-algebra structure from this construction.
\end{example}

\begin{lemma}For any right $\bdt$-functor $\mathbf{F}$, we have that $\mathbf{F}(\mr{Comp}_p) \subset \mathbf{F}(\bdt)^{p}(A)$ is a simplicial object, if all face maps and degeneracy maps are the restrictions of those of the bar construction, with the exception of $d_p \colon \mathbf{F}(\mr{Comp}_p) \to \mathbf{F}(\mr{Comp}_{p-1})$ for $p \geq 1$, which is defined to be $\mathbf{F}(c_p)$.\end{lemma} 

\begin{proof}It suffices to check that: (a) the face and degeneracy maps have images in the required codomain, (b) the last face map $d_p = \mathbf{F}(c_p)$ satisfies the simplicial identities.

We start with (a) in the case of degeneracy maps. Recall that $\mathbf{F}(\mr{Comp}_p)$ consists of all elements of $\mathbf{F}(\bdt)^{p-1}(\mr{Comp}_1)$ satisfying the following condition 
\begin{quote}
	($\ast$) applying the map $d_p$ in the innermost position $(p-2)$ times we obtain an element of $\mathbf{F}(\mr{Comp}_1)$.
\end{quote}
Adding an additional identity element preserves this property, with the possible exception for the right-most degeneracy map $s_p \colon \mathbf{F}(\mr{Comp}_p) \to \mathbf{F}(\mr{Comp}_{p+1})$ where it is a consequence of property (ii).

For (a) in the case of face maps $d_i \colon \mathbf{F}(\mr{Comp}_p) \to \mathbf{F}(\mr{Comp}_{p-1})$, we distinguish $i=0$, $0 < i < p$ and $i=p$. For $i=0$, one uses that $\mr{Comp}_p \subset \bdt(\mr{Comp}_{p-1})$. For $0<i<p$, one uses that associativity of the monad composition implies that ($\ast$) is preserved by $d_i$. The case $i=p$ follows directly from the definition of ($\ast$).

For checking (b), i.e. that $d_p$ satisfies the simplicial identities, one uses that $d_p$ is obtained by restriction $\mathbf{F}((\bdt)^{p-2}(c_1))$ and then applying $\mathbf{F}(\bdt)^{p-2}$ to properties (i) and (ii) gives $d_p \circ d_p = d_p \circ d_{p-1}$ and $d_p \circ s_p = \mr{id}$ respectively.
\end{proof}

Using the completion of partial $\cdt$-algebras, we define topological chiral homology of partial $\cdt$-algebras as follows.

\begin{definition}For $A$ a partial $\cdt$-algebra, with partial $\cdt$-algebra structure given by $A_\bullet$ and completion $\bar{A}$, we define $\int_M A$ to be $\int_M \bar{A}$.\end{definition}

We now give a smaller equivalent construction in the setting of Example \ref{exampcomp}. To do so we consider $\mathbf{M}^\theta (\mr{Comp}_\bullet)$ with $\mathbf{M}^\theta$ as in Definition \ref{defmtheta}, which was shown to be a simplicial object in Example \ref{exampcomp}.

\begin{lemma}\label{lemcompsimp} Suppose $A$ has a partial $\cdt$-algebra structure $A_\bullet$ as in Example \ref{exampcomp}. Then we have that $\int_M \bar{A}$ is weakly equivalent to realization of the simplicial object $\mathbf{M}^\theta \left(\mr{Comp}_\bullet\right)$.\end{lemma}

\begin{proof}
To see that these two definitions are equivalent, note that $\int_M \bar{A}$ is the realization of the bisimplicial object 
\[[p,p'] \mapsto \mathbf{M}^\theta(\bdt)^{p+1}(\mr{Comp}_{p'})\]
which admits an augmentation in the $p$-direction to $\mathbf{M}^\theta \mr{Comp}_{\bullet}$ by using the right $\bdt$-functor structure of $\mathbf{M}^\theta$. An extra degeneracy argument in the $p$-direction implies that for each $p'$ the augmentation induces a weak equivalence 
\[\left\vert[p] \mapsto  \mathbf{M}^\theta(\bdt)^{p+1}\left(\mr{Comp}_{p'}\right)\right\vert \to \mathbf{M}^\theta\left(\mr{Comp}_{p'}\right)\]
Realizing this weak equivalence in the $p'$-direction and using Lemma \ref{lemgeomrel} gives the desired weak equivalence.
\end{proof}

\subsubsection{Topological chiral homology for manifold bundles} One may generalize topological chiral homology to manifold bundles, using embeddings of a collection of disks with image in a fiber.

Let $B$ be a paracompact space and  $\pi \colon E \to B$  a manifold bundle of $B$ with fibers diffeomorphic to $M$. Let $\tilde{\pi} \colon \tilde{E} \to B$ be the associated principal $\mr{Diff}(M)$-bundle and  vertical tangent bundle $T_v E$ be given by
\[T_v E = \tilde{E} \times_{\mr{Diff}(M)} TM\]
A $\theta$-framing of $\pi \colon E \to M$ is a $\theta$-framing of its vertical tangent bundle of fibers, i.e. a bundle map $\phi_E \colon T_v E \to \theta^*\gamma$.


We shall define $\smash{\int_{E \downarrow B} A}$ for an $\cdt$-algebra $A$ by following the construction in Subsection \ref{subsubtch} but modifying the right $\bdt$-functor $\mathbf{M}^\theta$. To do so, we follow the constructions in Subsection \ref{subsubsectangent} and define a space $\mr{Emb}_B(\sqcup_k \bR^n,E)$ of embeddings $\sqcup_k \bR^n \hookrightarrow E$ with image in some fiber $\pi^{-1}(b)$ of $\pi$:
\[\mr{Emb}_B(\sqcup_k \bR^n,E) = \tilde{E} \times_{\mr{Diff}(M)} \mr{Emb}(\sqcup_k \bR^n,M)\]

Similarly define $\mr{Bun}_{B}(\sqcup_k T\bR^n,T_v E)$ as the space of vector bundle maps $\phi \colon \sqcup_k T\bR^n \to T_v E$ with image in a single fiber $(T_v E)_b$, and $\mr{Bun}^\theta_{B}(\sqcup_k T\bR^n,T_v E)$ as the space of triples $(\phi,\alpha,\varphi)$ of a bundle map $\phi \colon  \sqcup_k T\bR^n \to T_v E$, a locally constant function $\alpha \colon \sqcup_k \bR^n \to [0,\infty)$ and a path $\varphi_t \colon [0,\infty) \to \mr{Bun}_B(\sqcup_k T\bR^n,\theta^* \gamma)$ starting at $\prod_k \phi_{\bR^n}$ and equal to $\phi_E \circ \phi$ on $[\alpha,\infty)$. Then $\mr{Emb}^\theta_B(\sqcup_k \bR^n,E)$ is defined to be the pull back
\[\xymatrix{\mr{Emb}_B^\theta(\sqcup_k \bR^n,E) \ar@{.>}[r] \ar@{.>}[d] & \mr{Bun}_{B}^\theta(\sqcup_k T\bR^n,T_v E) \ar[d] \\
\mr{Emb}_B(\sqcup_k \bR^n,E) \ar[r] & \mr{Bun}_B(\sqcup_k T\bR^n,T_v E)}\]

\begin{definition}Let $\mathbf{E{\downarrow}B}^\theta$ be the following right functor over $\bdt$
\[C \mapsto \bigsqcup_{k \geq 0} \mr{Emb}^\theta_B(\sqcup_k\bR^n,E) \odot_{\fS_k} C^{\otimes k}\]
where the right functor structure comes from composition of $\theta$-framed embeddings.
\end{definition}

\begin{definition} The \emph{fiberwise topological chiral homology} $\int_{E \downarrow B} A$ of the manifold bundle $E \to B$ is the geometric realization \[\int_{E \downarrow B} A \coloneqq |B_\bullet(\mathbf{E{\downarrow}B}^\theta,\bdt,A)|\]
\end{definition}

There is a map $\pi_k \colon \mr{Emb}_B(\sqcup_k \bR^n,E) \to B$, mapping an $\theta$-framed embedding to the fiber containing its image. This commutes with the $\mathfrak{S}_k$-action and is preserved under precomposition by $\theta$-framed embeddings, if $\cat{C} = \cat{sSet}$ it induces a well-defined map $\int_{E \downarrow B} A \to B$. 

This allows us to take the the homology of fiberwise topological chiral homology with certain local coefficient system coming from the base $B$ when $\cat{C} = \cat{Top}$ or $\cat{sSet}$. For $\cat{C} = \cat{Ch}$, we will describe an analogous construction.

Suppose we are given a discrete group $G$, a $\Z[G]$-module $\cL$, and a principal $G$-bundle $p \colon \tilde{B} \to B$. Pull back $E$ along $p$ to get a new $\theta$-framed manifold bundle $\tilde{\pi} \colon \tilde{E} \to \tilde{B}$ with a $G$-action covering the $G$-action of $\tilde{B}$. The fiberwise $\theta$-framed embeddings $\mr{Emb}^\theta_{\tilde{B}}(\sqcup_k\bR^n,\tilde{E})$ inherit a $G$-action commuting with the $\mathfrak{S}_k$-action. We can define a right $\bdt$-functor $\mathbf{E \downarrow B}^{\theta,\cL}$ by
\[C \mapsto \bigsqcup_{k \geq 0} \left(C_*(\mr{Emb}^\theta_{\tilde{B}}(\sqcup_k\bR^n,\tilde{E})) \otimes_{\bZ[G]} \cL\right)  \otimes_{\fS_k} C^{\otimes k}\]

If $\cL$ is the trivial $\bZ[G]$-module $\bZ$, then this coincides with $\mathbf{E \downarrow B}^{\theta}$, since the $G$-action is free. The only other case we will need, is when $G$ is the symmetric group $\mathfrak{S}_r$ and $\cL$ is the sign representation $\bZ_{\pm 1}$.

\begin{definition}Given a principal $G$-bundle $\tilde{B} \to B$ and $\bZ[G]$-module $\cL$ as above, the \emph{fiberwise topological chiral homology with coefficients in $\cL$}, denoted $\int_{E \downarrow B,\cL} A$ of the manifold bundle $E \to B$ is the geometric realization
	\[\int_{E \downarrow B,\cL} A \coloneqq \left\vert B_\bullet\left(\mathbf{E{\downarrow}B}^{\theta,\cL},\bdt,A\right)\right\vert\]
\end{definition}

\begin{remark}For an $\cdt$-algebra $X$ in $\cat{C} = \cat{Top}$ or $\cat{sSet}$, we have that
\[H_*\left(\int_{E \downarrow B,\cL} C_*(X)\right) \cong H_*\left(\int_{E \downarrow B} X;\pi^* \cL\right)\] with $\pi \colon \int_{E \downarrow B} X \to B$ described above.\end{remark}

\subsubsection{A spectral sequence for topological chiral homology of manifold bundles} 

In this section we will construct a spectral sequence for topological chiral homology of manifold bundles, analogous to the Serre spectral sequence. We start with a lemma concerning the map $\pi_k \colon \mr{Emb}^\theta_B(\sqcup_k\bR^n,E) \to B$ induced by the map $\pi \colon E \to B$.

\begin{lemma}For a manifold bundle $\pi \colon E \to B$ with $\theta$-framing $\phi_E$, the map 
	\[\pi_k \colon \mr{Emb}^\theta_B(\sqcup_k\bR^n,E) \to B\]
is a Serre fibration.\end{lemma}

\begin{proof}The local trivializations of $\pi \colon E \to B$ induce local trivializations of the map $\pi_k \colon \mr{Emb}_B(\sqcup_k \bR^n,E) \to B$, so it is a Serre fibration. Since $\mr{Bun}_{B}^\theta(\sqcup_k T\bR^n,T_v E) \to \mr{Bun}_{B}(\sqcup_k T\bR^n,T_v E)$ is a Serre fibration, so is its pull back $\mr{Emb}_B^\theta(\sqcup_k \bR^n,E) \to \mr{Emb}_B(\sqcup_k \bR^n,E)$. Hence the composition $\mr{Emb}_B^\theta(\sqcup_k \bR^n,E) \to B$ is also a Serre fibration.
\end{proof}

Before constructing the promised spectral sequences, we recall a construction of the Serre spectral sequence due to Dress and described in Section 6.4 of \cite{mccleary}. If $f \colon E \to B$ is a Serre fibration, consider the bisimplicial set $K_{\bullet,\bullet}(f)$ with $(p,q)$-simplices given by pairs $(u,v)$ of $u \colon \Delta^p \times \Delta^q \to E$ and $v$ in $\Delta^p \to B$ such that the following diagram commutes
\[\xymatrix{\Delta^p \times \Delta^q \ar[r]^-u \ar[d] & E \ar[d]^f \\
	\Delta^p \ar[r]_-v & B}\]
This has the following properties:
\begin{enumerate}[(i)]
\item It follows from the definition that $K_{0,\bullet}(f) \cong \mr{Sing}(E)$, the singular simplicial set.
\item By Lemma 6.48 of \cite{mccleary}, precomposition with the unique map $\Delta^p \to \Delta^0$ induces a weak equivalence $K_{0,\bullet}(f) \to K_{p,\bullet}(f)$.
\item Given $v \colon \Delta^p \to B$, let $K_{p,\bullet}(f,v)$ denote the simplicial set of consisting of pairs $(u,v)$. If $e_v$ denote the initial vertex of $v$, then by the argument on page 228 of \cite{mccleary}, restricting to $e_v \times \Delta^q$ induces a map $K_{p,\bullet}(f,v) \to K_{0,\bullet}(f,e_v)$, which is a weak equivalence if $f$ is a Serre fibration. Under these identifications, if a face map does not preserve the initial vertex, one uses the two weak equivalences $ K_{0,\bullet}(f,e_v) \leftarrow  K_{1,\bullet}(f,s_v) \rightarrow  K_{0,\bullet}(f,e_v')$ where $s_v$ is the $1$-simplex connecting the old initial vertex $e_v$ and the new initial vertex $e_v'$. This zigzag will induce well-defined map on homology.
\item It follows from the definition that for each $e \colon \Delta^0 \to B$, we have that $K_{p,\bullet}(f,e) \cong \mr{Sing}(f^{-1}(e))$.
\end{enumerate}

\begin{proposition}\label{propssserre} Let $\pi \colon E \to B$ be a $\theta$-framed manifold bundle with $B$ path-connected and $n$-dimensional $\theta$-framed manifold $F$ as fiber, $p \colon \tilde{B} \to B$ be a principal $G$-bundle with $G$ discrete, and $\cL$ be a $\bZ[G]$-module that is free as an abelian group. Then there is a spectral sequence
	\[E^2_{p,q} = H_p\left(B;\cH_q\left(\int_{F} A\right) \otimes \cL\right) \Rightarrow H_{p+q}\left(\int_{E \downarrow B,\cL} A\right)\]
where $\cH_q\left(\int_{F} A\right)$ is the local system of $B$ given by $b \mapsto \cH_q\left(\int_{\pi^{-1}(b)} A\right)$. This is natural in fiberwise embeddings of manifold bundles over $B$. One can restrict to fixed charge $\bc$ to get a spectral sequence
	\[E^2_{p,q} = H_p\left(B;\cH_q\left(\int^\bc_{F} A\right) \otimes \cL\right) \Rightarrow H_{p+q}\left(\int^\bc_{E \downarrow B,\cL} A\right)\]\end{proposition}

\begin{proof}By Lemma \ref{lemchainssimplicialobjects} we may assume $\cat{C} = \cat{Ch}$. As the permutation action commutes with the map $\tilde{\pi}_k \colon \mr{Emb}^\theta_{\tilde{B}}(\sqcup_k \bR^n,\tilde{E}) \to \tilde{B}$ as above, $[p] \mapsto K_{p,\bullet}(\tilde{\pi}_k)$ is a simplicial object in simplicial sets with commuting $G$ and $\mathfrak{S}_k$-actions. Taking singular chains and applying $- \otimes_{\bZ[G]} \cL$ gives us a simplicial object with $\mathfrak{S}_k$-action in chain complexes:
\[[p] \mapsto S_{p,\ast}(\pi_k,\cL) \coloneqq C_*(\mr{Emb}^\theta_{\tilde{B}}(\sqcup_k \bR^n,\tilde{E})) \otimes_{\bZ[G]} \cL\]
The condition that $\cL$ is free as an abelian group implies this chain complex is levelwise free.
	
Thus we can define a functor $\mathbf{S}_{\bullet}(\pi,\cL) \colon \cat{Ch} \to \cat{Fun}(\Delta^\mr{op},\cat{Ch})$ by 
\[C \mapsto \left( [p] \mapsto \bigsqcup_{k \geq 0} S_{p,\bullet}(\pi_k,\cL) \otimes_{\bZ[\mathfrak{S}_k]} C^{\otimes k} \right)\]
and we remark for later use that $\mathbf{S}_{p}(\pi,\cL)$ splits as a direct sum of functors $\mathbf{S}_p(\pi,\cL,v)$ for $v \colon \Delta^p \to B$, as $K_{p,\bullet}(\tilde{\pi}_k)$ splits as a simplicial set with $G$-action into a disjoint union over $G$-orbits of $\tilde{v} \colon \Delta^p \to \tilde{B}$ of the simplicial sets $K_{p,\bullet}(\tilde{\pi}_k,\tilde{v})$. Note that the $G$-orbits of $\tilde{v} \colon \Delta^p \to \tilde{B}$ are in bijection with the maps $v \colon \Delta^p \to B$.

Similarly, since the right $\bdt$-functor structure on $\mathbf{\tilde{E} \downarrow \tilde{B}}^\theta$ is $G$-equivariant and commutes with the map to $\tilde{B}$, composition of $\theta$-framed embeddings endows the functor $\mathbf{S}_{\bullet}(\pi,\cL)$ and functors $\mathbf{S}_p(\pi,\cL,v)$ with the structure of right $\bdt$-functors. Note that $\mathbf{S}_0(\pi,\cL)$ is isomorphic to $\mathbf{E \downarrow B}^{\theta,\cL}$ as a right $\bdt$-functor. 

We next consider the bisimplicial object in $\cat{Ch}$ given by
\[[p,q] \mapsto \mathbf{S}_p(\pi,\cL)(\bdt)^q(A)\]
Properties (i) and (ii) of $K_{\bullet,\bullet}$ imply that $[p] \mapsto \mathbf{S}_p(\pi) (\bdt)^q(A)$ is weakly equivalent to the constant simplicial object $[p] \mapsto \mathbf{S}_0(\pi,\cL) (\bdt)^q(A) \cong (\mathbf{E \downarrow B}^{\theta,\cL})(\bdt)^q(A)$. Hence first realizing the $[p]$-direction and using Lemma \ref{lemgeomrel}, and then the $[q]$-direction we obtain
\[\left\vert[p,q] \mapsto \mathbf{S}_p(\pi,\cL) (\bdt)^q(A)\right\vert \simeq \left\vert[q] \mapsto (\mathbf{E \downarrow B}^{\theta,\cL})(\bdt)^q(A)\right\vert = \int_{E \downarrow B,\cL} A\]

On other hand, if we first realize the $[q]$-direction and consider the geometric realization spectral sequence of Lemma \ref{lemgeomrealss} for the remaining $[p]$-direction, we get
\[\xymatrix{E^1_{st} = \bigoplus_{v \colon \Delta^s \to B} H_t\left(\left\vert[q] \mapsto \mathbf{S}_{p}(\pi,\cL,v)(\bdt)^q(A)\right\vert\right) \ar@{=>}[d] \\ H_{s+t}\left(\left\vert[p,q] \mapsto \mathbf{S}_p(\pi,\cL) (\bdt)^q(A)\right\vert\right)}\]
and using properties (iii) and (iv) of $K_{\bullet,\bullet}$ and Lemma \ref{lemgeomrel}, we have identifications 
\[\left\vert\mathbf{S}_{p}(\pi,v,\cL)(\bdt)^q(A)\right\vert \simeq \left\vert\mathbf{S}_{0}(\pi,e_v,\cL)(\bdt)^q(A)\right\vert \simeq \left(\int_{\pi^{-1}(e_v)} A\right) \otimes \cL\]
Thus the entries on the $E^1$-page are given by 
\[E^1_{st} = \bigoplus_{v \colon \Delta^s \to B} H_t\left(\int_{\pi^{-1}(e_v)} A\right) \otimes \cL\]
and the $d^1$-differential is given by taking the alternating sum of maps induced by restricting to faces of the simplex. If a face does not have the same initial vertex as the larger simplex, then one also parallel transports along the $1$-simplex connecting these two initial vertices. This is the definition of a chain complex computing the homology of $B$ with local coefficients in the local system as described in the statement of this proposition.

For naturality, we note that the construction of $\mathbf{S}_\bullet(\pi,\cL)$ and the various identifications made throughout the proof are natural in fiberwise $\theta$-framed embeddings of $\theta$-framed manifold bundles over $B$. By working in $\cat{Ch}_C$ to keep track of the charge, one proves that the spectral sequence can be restricted to components of fixed charge $\bc$.\end{proof}

\subsubsection{Stabilization maps} If $M$ is a connected non-compact $n$-dimensional manifold, McDuff introduced a map $C_k(M) \to C_{k+1}(M)$ of configuration spaces given by ``bringing in a particle from infinity'' \cite{Mc1}. This is usually called the \emph{stabilization map}. It can be generalized to a map $\int_M^\bc A \to \int_M^{\bc+\bc_0} A$ between the components of topological chiral homology of a charged algebra, as long as $\bc+\bc_0$ is defined in the partial monoid $C$.

The construction of this map uses the fact that topological chiral homology is functorial with respect to open $\theta$-framed embeddings to make space near the boundary to add in a new labeled embedded disk. We describe the construction only for $\cat{C} = \cat{Ch}$, as the constructions for $\cat{C} = \cat{Top}$ or $\cat{sSet}$ are similar. The resulting map will depend on two choices; (i) an element $a \in A(\bc_0)$, (ii) a $\theta$-framed embedding $\psi \colon \bR^n \sqcup M \to M$.

Topological chiral homology with coefficients in a fixed $\cdt$-algebra $A$ gives a symmetric monoidal functor from $\cat{Emb}^\theta$ to  $\cat{C}$, the former defined in Subsection \ref{subsubsectangent}. This functionality is induced by composition of embeddings. Thus the map $\psi \colon \R^n \sqcup M \m M$ induces a map 
\[\psi_* \colon \int_{\R^n} A  \otimes \int_M A \m \int_M A\]

When $A$ is a $C$-charged algebra, the monoidal structure is compatible with charge and so the maps $\psi_*$ are additive with respect to charge. Using the natural equivalence $A \m \int_{\R^n} A$ and restricting to specific charges we get maps 
\[\psi_*^0 \colon A(\bc_0) \otimes \int^\bc_M A \m  \int^{\bc+\bc_0}_M A\]
as long as $\bc+\bc_0$ is defined in $C$. We can now define the stabilization maps.

\begin{definition}\label{defstabmap}
Suppose that $\bc+\bc_0$ is defined in $\cat{C}$. Let $a \in A(\bc_0)$ and $\psi \colon \bR^n \sqcup M \to M$, then $t_{a,\psi} \colon\int^\bc_M A \m   \int^{\bc+\bc_0}_M A$ is the map given by $t_{a,\psi}(b)= \psi_*^0(a,b)$.
\end{definition}

This definition also makes sense for $\theta$-framed manifold bundles $\pi \colon E \to B$ with fiberwise embedding $\psi \colon (\bR^n \times B) \sqcup E \hookrightarrow E$ over $B$, as then one can repeat the construction above. The resulting map will similarly be denoted $t_{a,\psi} \colon \int_{E \downarrow B} A \to \int_{E \downarrow B} A$. 

\subsubsection{Homological stability for topological chiral homology} We can now give a precise definition of homological stability for topological chiral homology. We will only do this in the case that $C=\bN_0^d$. To get well-defined stabilization maps on the manifold bundles involved, we will only use stabilization maps for embeddings $\bR^n \sqcup M \to M$ which can be thought of as making space for $\bR^n$ by moving part of $M$ away from infinity. This is made precise as follows.

\begin{definition}\label{defendlike} An \emph{end-like embedding} is a path of embeddings $\psi_t$ in $\mr{Emb}^\theta(\bR^n \sqcup M,M)$ such that (i) there exists an exhausting smooth function $h \colon M \to [0,1)$ such that $\psi_t$ is the identity on $h^{-1}([0,t])$, (ii) $\psi_0|_{M}$ is isotopic to the identity.\end{definition}

\begin{remark}If $M$ is the interior of a compact connected $\theta$-framed manifold with boundary, one can construct embeddings $\psi_t$ as above that up to isotopy only depends on the choice of a component of the boundary. For general non-compact connected $\theta$-framed manifold the situation is more complicated: one can always construct an embedding $\psi_t$ as above. If $\dim M \geq 5$, one can construct embeddings $\psi_t$ as above that up to isotopy only depend on the choice of a component of the space of ends.\end{remark}

Suppose that $A$ is a $C$-charged $\cdt$-algebra, and hence by definition connected. For each representative $a_i \in A(\be_i)$ of $1 \in H_0(A(\be_i)) \cong \bZ$ (the isomorphism coming from the augmentation) and \emph{end-like embedding} $\psi_t$, we get a \emph{basic stabilization map} 
\[t_{a_i,\psi_0} \colon \int^{\bk}_M A \to \int^{\bk+\be_i}_M A.\]
The fact that topological chiral homology is an enriched functor implies this map only depends on the isotopy class of $\psi_t$. From now on we will drop $a_i$ and $\psi_0$ from the notation.

\begin{definition}\label{defhomstabtch} Let $C = \bN_0^d$, $\bk=(k_1,\ldots,k_d)$, and let $M$ be a $\theta$-framed  connected non-compact manifold.
\begin{enumerate}[(i)]
\item  Let $A$ be a $C$-charged $\cdt$-algebra in chain complexes, then we say that $A$ \emph{has homological stability on $M$} if there is a function $\rho \colon \bN_0 \to \bR_{\geq 0}$ with $\lim_{j \to \infty} \rho(j) =\infty$ such that all basic stabilization maps 
\[t_i \colon H_*\left(\int^{\bk}_M A\right) \to H_*\left(\int^{\bk+\be_i}_M A\right)\] are $\rho(k_i)$-equivalences. 
\item  Let $X$ be a $C$-charged $\cdt$-algebra in spaces or simplicial sets, then we say that $X$ \emph{has homological stability on $M$} if $C_*(X)$ has.\end{enumerate}
\end{definition}

\section{Cell attachments}\label{secencell} In this section we define $\cdt$-cell attachments and study their effect on homology. For the remainder of this section, we fix the tangential structure $\theta \colon W \to BO(n)$ and the monoid of charges $C$, and hence suppress them from our notation whenever convenient. For example, if we mention charged algebras, we mean $C$-charged $\cdt$-algebras.

\subsection{$\cdt$-cell attachments} Using the completion procedure of Subsection \ref{subsecpartialalg}, we can now define $\cdt$-cell attachments in $\cat{Ch}$ or $\cat{Ch}_C$. This case is the one used in main theorem, and $\cdt$-cell attachments in $\cat{Top}$ and $\cat{sSet}$ are defined analogously. See Remark \ref{remenattach} for a discussion of the relationship between our construction and similar constructions in the literature. 

Let $A$ be a charged algebra. Fixing a cycle $b_{N-1} \in A(\bc)$ of degree $N-1$, we define the partial algebra $A \oplus e_N$ by declaring that $e_N$ is of charge $\bc$ and $d(e_N) = b_{N-1}$ and declaring no operation on $e_N$ is  defined except the identity. We now make this definition precise:

\begin{definition}\label{defpartialalgebracomp} Let $D = A \oplus e_N$ be the chain complex with underlying graded abelian group $A \oplus e_N$ with differential extending that of $A$ by setting $d(e_N) = b_{N-1}$. This can be given the structure of a partial charged algebra by setting 
\[D_p = \bdt((\bdt)^p(A) \oplus (\mr{id}^p \otimes e_{N})) \subset B_p(\bdt,\bdt,D)\] 

This is a particular case of Example \ref{exampcomp}, where $\mr{Comp}_1 = \bdt(A) \oplus (\mr{id} \otimes e_N) \subset \bdt(A \oplus e_N)$ and $c_1 \colon \mr{Comp}_1 \to A$ is given by the $\cdt$-action on $\bdt(A)$ and by $\mr{id} \otimes e_N \mapsto e_N$. It is clear that condition (i) holds, and (ii) follows from the fact that $\mr{Comp}_2 = \mr{Comp}_2' = (\bdt)^2(A) \oplus (\mr{id}^2 \otimes e_{N})$. We then have that $\mr{Comp}_p = (\bdt)^p(A) \oplus (\mr{id}^p \otimes e_{N})$.\end{definition}

We remark that $D_\bullet$ is Reedy-cofibrant if the underlying chain complex of $A$ is cofibrant. We can now define charged algebras obtained by $\cdt$-cell attachments.

\begin{definition}Let $b_{N-1}$ and $A \oplus e_N$ be as before. We let the charged algebra $A \uplus e_N$ be the completion of the partial charged algebra $A \oplus e_N$, i.e. $A \uplus e_N = |D_\bullet|$, and say that $A \uplus e_N$ is \emph{obtained from $A \oplus e_N$ by attaching an $\cdt$-cell $e_N$ to $A$}.
\end{definition}


Our next goal is to show that one may construct a map from $A \uplus e_N$ to $B$ out of a map $f \colon A \to B$ of charged algebras and an element $e \in B(k)_N$ such that $d(e) = f(b_{N-1})$.

\begin{lemma}\label{lemattachmap} Let $A \uplus e_N$ be the charged algebra obtained by attaching an $\cdt$-cell $e_N$ to $A$ along $b_{N-1}$ in charge $k$, $f$ a map $A \to B$ and $e$ an element $e \in B(k)_N$ such that $d(e) = f(b_{N-1})$. There exists a map $\hat{f} \colon A \uplus e_N \to B$ of charged algebras such that $\hat{f}(e_N) = e$ and such that the following diagram commutes:
\[\xymatrix{A \uplus e_N \ar[r]^{\hat{f}} & B \\
\bar{A} \ar[u] \ar[r]_\simeq & \ar[u]_f A}\]
where $\bar{A} \to A\uplus e_N$ is induced by the inclusion $A_\bullet \to D_\bullet$ of in the notation of Example \ref{examptrivialpartial} and Definition \ref{defpartialalgebracomp} (which will be repeated in the proof).\end{lemma}

\begin{proof}Recall that $A\uplus e_N$ is by definition given by
	\[A\uplus e_N \coloneqq \left\vert [p] \mapsto D_p = \bdt((\bdt)^p(A) \oplus \mr{id}^p \otimes e_{N})\right\vert\]
By Example \ref{examptrivialpartial}, $A$ and $B$ has partial charged algebra structures $A_\bullet = B_\bullet(\bdt,\bdt,A)$ and $B_\bullet = B_\bullet(\bdt,\bdt,B)$, and we have weak equivalences $\bar{A} = |A_\bullet| \to A$ and $\bar{B} = |B_\bullet| \to B$.

We construct a map $D_\bullet \to B_\bullet$ out of the map $f \colon A \to B$ and the element $e \in B(k)_N$ such that $d(e) = f(b_{N-1})$. Then geometric realization and the augmentation gives us the map $A \uplus e_N \to B$ defined as $\bar{D} \to \bar{B} \to B$. 

To construct the map $D_\bullet \to B_\bullet$ we only need to define compatible maps 
\[D_p = \bdt\left((\bdt)^p(A) \oplus (\mr{id}^p \otimes e_{N})\right) \to (\bdt)^{p+1}(B) = B_p\]
This is easy, as we can apply $\bdt$ to the direct sum of the maps $(\bdt)^p(f)$ and $\mr{id}^p \otimes e_N \mapsto \mr{id}^p \otimes e$. This has the desired properties.\end{proof}

\begin{remark}\label{remenattach} This construction hints at the fact that an alternative definition of an $\cdt$-cell attachment is given by the push out 
	\[A \cup_{\bdt(S^{N-1})} \bdt(D^{N})\]
in the $(\infty,1)$-category of $C$-charged $\cdt$-algebras. Equivalently one can work in a suitable model category of $\cdt$-algebras, see \cite{bergermoerdijk}, \cite{fressebook}, \cite{Sa}, or \cite{horel}. A definition of $\cdt$-cell structures in terms of push outs is used to define cofibrant replacement for algebras over operads, e.g. Section 12.3 of \cite{fressebook}. We also believe that there is an $(\infty,1)$-category (or model category) of partial algebras over an operad such that the forgetful functor from algebras to partial algebras and the completion functor from partial algebras to algebras form an $(\infty,1)$-adjunction (a Quillen adjunction). \end{remark}  

\begin{remark}\label{remmultiplecells0} The construction of this subsection can be generalized to the attachment of any collection of $\cdt$-cells $\{e_N^j\}_{j \in J}$ attached along $\{b_{N-1}^j\}_{j \in J}$: one defines the structure of a partial charged algebra on $A \oplus \bigoplus_{j \in J} e_N^j$ in the same way and defines $A \uplus \biguplus_{j \in J} e_N^j$ to be its completion. Lemma \ref{lemattachmap} generalizes in the expected way.\end{remark}


\subsection{The homology after an $\cdt$-cell attachment}\label{subsechomologyaftercellattachment} The goal of this subsection is to show how a $\cdt$-cell attachment affects homology. This will done by applying two spectral sequences, and we start by describing the first one. To do so we need to make a few definitions.

Let $F_r(M)$ denote the configuration space of $r$ ordered particles in $M$ and $C_r(M)$ the configuration space of $r$ unordered particles in $M$. Note that there is a manifold bundle $E_r(M)$ over $C_r(M)$ given by the subspace of $M \times C_r(M)$ of $(m,\{m_1,\ldots,m_r\})$ such that $m \neq m_i$ for all $1 \leq i \leq r$. The fiber of this manifold bundle over a point $\{m_1,\ldots,m_r\} \in C_r(M)$ is $M \setminus \{m_1,\ldots,m_r\}$. 

We will need to consider a variation of this manifold bundle, with $C_r(M)$ replaced by a configuration space with labels $C_r^\theta(M)$. The labels will come from a space $\theta(TM)$ over $M$; $\theta(TM) = \mr{Emb}^\theta(\bR^n,M)$ and $\theta_{TM} \colon \theta(TM) \to M$ sends a $\theta$-framed embedding $\psi$ to $\psi(0)$, the image of the origin.

\begin{lemma}The map $\theta_{TM} \colon \theta(TM) \to M$ is a Serre fibration with path-connected fibers.\end{lemma}

\begin{proof}The map $\theta_{TM}$ equals the composition
	\[ \mr{Emb}^\theta(\bR^n,M) \to  \mr{Emb}(\bR^n,M) \to M\]
	The left-hand map is a Serre fibration as it is the pull back of the fibration $\mr{Bun}^\theta(T\bR^n,TM) \to \mr{Bun}(T\bR^n,TM)$ and the right-hand map is a Serre fibration by the parametrized isotopy extension theorem.
	
	The fiber of $m \in M$ is the space of $\theta$-framed embeddings $\bR^n \to M$ sending the origin to $m$. Picking a chart around $M$, this weakly equivalent to a space of $\theta$-framed embeddings $\bR^n \to \bR^n$, the latter with a possibly non-trivial $\theta$-framing. This is path-connected by Lemma \ref{lemthetaconn}.
\end{proof}

We now define labeled configuration spaces.

\begin{definition} \label{colconfig} Fix $\bk=(k_1,\ldots,k_d) \in \N_0^d$ and $k=k_1+\ldots+k_d$. Let $\pi \colon E \to M$ be a Serre fibration. The \emph{configuration space of $k$ ordered particles with labels in the fibration $\pi$}, denoted $F_k^\pi(M)$, is given by the following subspace of $E^k$:
	\[F_k^\pi(M) \coloneqq \left\{ (e_1,\ldots,e_k) \middle|
	\pi(e_i) \neq \pi(e_j) \text{ for $i \neq j$}	\right\}\]
	
\noindent The \emph{configuration space of $\bk$-colored particles with labels in the fibration $\pi$}, denoted $C_\bk^\pi(M)$, is the quotient $F_k^\pi(M)/\mathfrak{S}_{\bk} \subset E^k/\fS_k$ where $\fS_{\bk} = \prod_i \fS_{k_i} \subset \fS_{k}$ and $\fS_k$ acts diagonally. For $d=1$, we denote this by $C_\bk^\pi(M)$.
\end{definition}  

A superscript $\theta$ will mean we take the map $\pi$ to be the map $\theta_{TM} \colon \theta(TM) \to M$. The quotient map $q \colon F_r^\theta(M) \to C_r^\theta(M)$ is a principal $\mathfrak{S}_k$-bundle. This can be seen by noting that there is a map $f \colon C_r^\theta(M) \to C_r(M)$ which forgets the labels, and that $q$ is obtained by pulling back the principal $\mathfrak{S}_k$-bundle $F_r(M) \to C_r(M)$ along $f$. We let $E_r^\theta(M)$ denote the pullback of $E_r(M)$ along $f$. The fiber of $E_r^\theta(M)$ over $\{\psi_1,\ldots,\psi_r\} \in C_r^\theta(M)$ is given by $M \setminus \{\psi_1(0),\ldots,\psi_r(0)\}$.

\begin{lemma}Every end-like embedding $\bR^n \sqcup M \hookrightarrow M$ as in Definition \ref{defendlike} induces a unique isotopy class of fiberwise embeddings $\bR^n \times C_r^\theta(M) \sqcup E_r^\theta(M)\to E_r^\theta(M)$.\end{lemma}

\begin{proof}Recall that an end-like embedding is a continuous map $\psi_t \colon [0,1) \to \mr{Emb}^\theta(\bR^n \sqcup M,M)$ that in particular has the property there exists an exhausting proper smooth function $h \colon M \to [0,1)$ such that $\psi_t$ is the identity on $h^{-1}([0,t])$. Fix such an $h$.

Pick a smooth function $\eta \colon C_r^\theta(M) \to [0,1)$ such that the configuration $x \in C_r(M)$ is contained in $h^{-1}([0,\eta(x)])$. Then the map
\[\bR^n \times C_r^\theta(M) \sqcup E_r^\theta(M) \to E_r^\theta(M)\]
defined over $x \in C_r^\theta(M)$ by $\psi_{\eta(x)}$ is a fiberwise embedding over $C_r^\theta(M)$. To show it is independent of the choice of $h$ and $\eta$ up to isotopy, note that for any pairs $h_i$, $\eta_i$, $i=0,1$ we can find $h'$ and $\eta'$ such that $h' \leq \min(h_0,h_1)$ and $\eta' \geq \max(\eta_0,\eta_1)$. Then linearly interpolating the functions $h_i$ and $h'$, and the functions $\eta'$ and $\eta$ gives isotopies from the fiberwise embedding constructed out of $h_i$ and $\eta_i$ to that constructed out of $h'$ and $\eta'$.\end{proof}

As topological chiral homology is enriched functor, the upshot of the lemma is that given a basic stabilization map for $M$ there is a map 
\[t_i \colon\int^{\bc}_{E_r^\theta(M) \downarrow C_r^\theta(M)} A \m \int^{\bc+\be_i}_{E_r^\theta(M) \downarrow C_r^\theta(M)} A\]
that is well-defined on homology. We will apply the following spectral sequence to it, which is constructed by applying Proposition \ref{propssserre} to the $\theta$-framed manifold bundle $E^\theta_r(M) \to C^\theta_r(M)$. It uses the principal $\mathfrak{S}_r$-bundle $q \colon F^\theta_r(M) \to C^\theta_r(M)$ to define local coefficients in a $\bZ[\mathfrak{S}_r]$-module $\cL$.

\begin{proposition}\label{propsscrtheta} Let $\cL$ be a $\bZ[\mathfrak{S}_r]$-module that is free as an abelian group. Then there is a spectral sequence 
\[E^2_{p,q} = H_p\left(C_{r}^\theta(M);\cH_q\left(\int^{\bc}_{M \setminus \{\text{$r$\text{ points}}\}} A\right) \otimes \cL\right) 
\Rightarrow H_{p+q}\left(\int^{\bc}_{E_r^\theta(M) \downarrow C_r^\theta(M),\cL} A\right)\]
where $\cH_q(\int^{\bc}_{M \setminus \{\text{$r$ points}\}} A)$ is the local system over $C_r^\theta(M)$ obtained by pulling back the local system over $C_r(M)$ with fiber over the point $\{m_1,\ldots,m_r\} \in C_r(M)$ given by $H_q(\int^{\bc}_{M \setminus \{m_1,\ldots,m_r\}} A)$. Stabilization maps induce a map of spectral sequences.
\end{proposition}

Now we continue to describe the second spectral sequence involved in the study of the homology of a $\cdt$-cell attachment. The identification of its $E^1$-page involves a new right $\bdt$-functor. We let 
\[\mr{Emb}^{\theta,\mr{disj}}_{F^\theta_r(M)}\left(\sqcup_k \bR^n,q^*E^\theta_r(M)\right) \subset \mr{Emb}^{\theta}_{F^\theta_r(M)}\left(\sqcup_k \bR^n,q^* E^\theta_r(M)\right)\]
be the subspace of those $\theta$-framed embedding $(\phi_1,\ldots,\phi_k)$ over those points $(\psi_1,\ldots,\psi_r) \in F^\theta_r(M)$ such that the images of the $\phi_i$ for $1\leq i \leq k$ and $\psi_j$ for $1 \leq j \leq r$ are all pairwise disjoint. Since this property is preserved by precomposition of the $\phi_i$ with $\theta$-framed embeddings, we can define a right $\bdt$-functor $\mathbf{E_r^\theta(M) \downarrow C_r^\theta(M)}^{\mr{disj},\cL} \colon \cat{Ch} \to \cat{Ch}$  by
\[C \mapsto \bigoplus_{k \geq 0} \left(C_*(\mr{Emb}^{\theta,\mr{disj}}_{F^\theta_r(M)}(\sqcup_{k}\bR^n,q^* E^\theta_r(M))) \otimes_{\bZ[\mathfrak{S}_r]} \cL\right) \otimes_{\fS_k} C^{\otimes k}\]
such that the inclusion induces a map $\mathbf{E_r^\theta(M) \downarrow C_r^\theta(M)}^{\mr{disj},\cL} \to \mathbf{E_r^\theta(M) \downarrow C_r^\theta(M)}^{\cL}$ of right $\bdt$-functors.

\begin{lemma}\label{lemdisjcomp} This map $\mathbf{E_r^\theta(M) \downarrow C_r^\theta(M)}^{\mr{disj},\cL} \to \mathbf{E_r^\theta(M) \downarrow C_r^\theta(M)}^\cL$ is a natural weak equivalence.\end{lemma}

\begin{proof}By Lemma \ref{lemweakequivfunctor} it suffices to prove that for all $r \geq 0$ the inclusion
\[\mr{Emb}^{\theta,\mr{disj}}_{F^\theta_r(M)}(\sqcup_k \bR^n,q^*E^\theta_r(M)) \hookrightarrow \mr{Emb}^{\theta}_{F^\theta_r(M)}(\sqcup_k \bR^n,q^*E^\theta_r(M))\]
is a $\mathfrak{S}_r$-equivariant weak equivalence. To see this is case, consider a commutative diagram 
	\[\xymatrix{\partial D^i \ar[r]^-f \ar[d] & \mr{Emb}^{\theta,\mr{disj}}_{F^\theta_r(M)}(\sqcup_k \bR^n,q^*E^\theta_r(M)) \ar[d] \\
	D^i \ar[r]_-F &  \mr{Emb}^{\theta}_{F^\theta_r(M)}(\sqcup_k \bR^n,q^*E^\theta_r(M))}\]

We may assume that there is an open neighborhood $U$ of $\partial D^i$ in $D^i$ such that $F|_U$ lands in $\mr{Emb}^{\theta,\mr{disj}}_{F^\theta_r(M)}(\sqcup_k \bR^n,q^* E^\theta_r(M)$. Pick a family of origin-preserving self-embeddings $\lambda_t \colon \bR^n \to \bR^n$ for $t \in [0,\infty)$ such that $\lambda_0 = \mr{id}$ and $\lambda_t$ has image in the open ball of radius $1/t$. Since $\mr{Emb}^\theta(\bR^n,\bR^n) \to \mr{Emb}(\bR^n,\bR^n)$ is a fibration, we can lift this to a family of $\theta$-framed embeddings starting at the identity.

Then there exists a real number $T \geq 0$ such that after precomposing each of the $(k+r)$ embeddings in $F(d)$ with $\lambda_{T'}$ to $T' \geq T$ their images are pairwise disjoint. Now pick a continuous function $\eta \colon D^i \to [0,1]$ with support in $U$ and $1$ on $\partial D^i$, and let $(\lambda_t)_*$ denote precomposition of all $k$ embeddings of $\bR^n$ with $\lambda_t$. Then $(\lambda_{(1-\eta)T})_* \circ F$ is $\mathfrak{S}_r$-equivariantly homotopic to $F$ rel $\partial D^i$ and has image in $\mr{Emb}^{\theta,\mr{disj}}_{F^\theta_r(M)}(\sqcup_k \bR^n,q^*F^\theta_r(M))$.
\end{proof}

The statement of the next proposition will involve the homology of components of charge $\bc-p\bk$, which by definition is the unique element of $C$ such that $(\bc-p\bk)+p\cdot \bk = \bc$, where $p \cdot \bk$ means adding $\bk$ to itself $p$ times. This may not exist for large $p$ and if it does not, the homology will be defined to be $0$. Let $\bZ_{\pm 1}$ be the $\bZ[\mathfrak{S}_r]$-module induced by the sign homomorphism $\mathfrak{S}_r \to \{\pm 1\}$. 

\begin{proposition}\label{propsscompl} Suppose $e_N$ is attached to $A$ in charge $\bk$ along $b_{N-1}$, then there is a spectral sequence
\[E^1_{pq} = H_{q-p(N-1)}\left(\int^{\bc-p\bk}_{E^\theta_p(M) \downarrow C^\theta_p(M)} A;\bZ_{\pm}^{\otimes N}\right) \Rightarrow H_{p+q}\left(\int^\bc_M A \uplus e_N\right)\]
and if $b_{N-1}=0$ the spectral sequence collapses at the $E^1$-page. If $M$ is path-connected, the map $d^1 \colon E^1_{1q} \to E^1_{0q}$ is induced by the map $H_{*-N+1}\left(\int^{\bc-\bk}_{M \setminus m} A\right) \to H_*\left(\int^\bc_M A\right)$ by adding a particle with label $b_{N-1}$ at $m \in M$.  Stabilization maps induce a map of spectral sequences. 
\end{proposition}

\begin{proof}By Lemma \ref{lemcompsimp}, $\int^\bc_M A \uplus e_N$ is equivalent to the realization of the simplicial chain complex $\mathbf{M}^\theta(\mr{Comp}_\bullet)$. Recall that $\mr{Comp}_p = (\bdt)^{p}(A) \oplus (\mr{id}^p \otimes e_N)$, so that we can define a filtration of simplicial chain complexes by setting $\mathcal F_j \mathbf{M}^\theta\left(\mr{Comp}_p\right)$ to be
\[\bigoplus_{0 \leq r \leq j} \bigoplus_{k \geq 0} C_*(\mr{Emb}^\theta(\sqcup_{k+r} \bR^n,M)) \underset{\mathfrak{S}_{k} \times \mathfrak{S}_r}{\otimes} \left((\bdt)^{p}(A)^{\otimes k} \otimes (\mr{id}^p \otimes e_N)^{\otimes r}\right)\]
That is, we allow at most $j$ copies of the new cell.

This induces a filtration of chain complexes upon realization. The spectral sequence associated to this filtration is one in the statement of this proposition, and is constructed as in Section 2.2 of \cite{mccleary}. Its convergence follows from the finiteness of the filtrations in each homological degree. The associated graded is independent of $b_{N-1}$, so it suffices to identify it in the case $b_{N-1} = 0$. In that case the filtration exactly comes from a direct sum decomposition with summands the subcomplexes of $|\mathbf{M}^\theta(\mr{Comp}_\bullet)|$ with exactly $r$ copies of $e_N$'s. More precisely, we have that
\begin{align*}\mathbf{M}^\theta\left(\mr{Comp}_p\right) &\cong \bigoplus_{r \geq 0} (\mathcal F_r \mathbf{M}^\theta \left(\mr{Comp}_p\right))/(\mathcal F_{r-1} \mathbf{M}^\theta\left( \mr{Comp}_p\right)) \\
&\cong \bigoplus_{r \geq 0} \bigoplus_{k \geq 0} C_*(\mr{Emb}^\theta(\sqcup_{k+r} \bR^n,M)) \underset{\mathfrak{S}_{k} \otimes \mathfrak{S}_r}{\otimes} \left((\bdt)^{p}(A)^{\otimes k} \otimes (\mr{id}^p \otimes e_N)^{\otimes r}\right) \\
&\cong \bigoplus_{r \geq 0} \left(\mathbf{E_r^\theta(M) \downarrow C_r^\theta(M)}^{\mr{disj},\bZ_{\pm 1}^{\otimes N}}\right)\left((\bdt)^p(A)\right)[rN] \\
&= \bigoplus_{r \geq 0} B_p\left(\mathbf{E_r^\theta(M) \downarrow C_r^\theta(M)}^{\mr{disj},\bZ_{\pm 1}^{\otimes N}},\bdt,A\right)[rN]
\end{align*}
and $|\mathbf{M}^\theta(\mr{Comp}_\bullet)|$ is isomorphic to the direct sum over $r$ of the realization of simplicial chain complexes on the last line. This direct sum decomposition causes the collapse at the $E^1$-page in the case $b_{N-1} = 0$ and allows us to identify the $E^1$-page: Lemma's \ref{lemweakequivfunctor} and \ref{lemdisjcomp} to show that the inclusion
\[B_\bullet\left(\mathbf{E_r^\theta(M) \downarrow C_r^\theta(M)}^{\mr{disj},\bZ_{\pm 1}^{\otimes N}},\bdt,A\right) \to B_\bullet\left(\mathbf{E_r^\theta(M) \downarrow C_r^\theta(M)}^{\bZ_{\pm 1}^{\otimes N}},\bdt,A\right)\]
is a weak equivalence. The $r$th column on the $E^1$-page is given by taking the homology of the associated chain complex, so that by comparing to the definition of topological chiral homology of manifold bundles with local coefficients we obtain the identification
\[E^1_{pq} = H_{*-pN+p}\left(\int^{\bc-p\bk}_{E^\theta_p(M) \downarrow C^\theta_p(M),\bZ_{\pm 1}^{\otimes N}} A\right)\]

We recall that the $d^1$-differential is defined a class $[x] \in E^1_{pq}$ on the $E^1$-page as follows: one lifts it to a cycle $x$ in the associated graded $\cF_p/\cF_{p-1}$, i.e. $d(x) \in \cF_{p-1}$. Then $d_1([x]) = [d(x)]$. In our case, an element of $E^1_{0q}$ can be represented by taking a cycle in $\int^{\bc-\bk}_{M \setminus m} A$ and adding a copy of $e_{N}$ at $m$. The differential then vanishes on the cycle and sends $e_N$ to $b_{N-1}$.
\end{proof}

A first consequence of this is that we can easily compute the effect of $\cdt$-cell attachments in homology in degrees less than or equal to the degree of the cell.

\begin{corollary}\label{corattachexactseq} Suppose that $e_N$ is attached in charge $\bk$ along $b_{N-1}$.
\begin{enumerate}[(i)] \item If $\bc<\bk$ (that is, there exists no $\bc'$ such that $\bc = \bc'+\bk$), we have that $H_*(A(\bc)) \to H_*((A \uplus e_N)(\bc))$ induces an isomorphism.
\item In general we have that $H_*(A) \to H_*(A \uplus e_N)$ induces an isomorphism for $* \leq N-2$.
\item If $\bc \geq \bk$, (that is, there exists a $\bc'$ such that $\bc = \bc'+\bk$), there is an exact sequence
\[H_N(A(\bc)) \to H_N((A \uplus e_N)(\bc)) \to \bZ \to H_{N-1}(A(\bc)) \to H_{N-1}((A \uplus e_N)(\bc)) \to 0\]
where the map $\bZ \to H_{N-1}(A(\bc))$ is induced by $1 \mapsto t_{\bc'}(b_{N-1})$. In particular, attaching an $\cdt$-cell of degree $N$ with trivial attaching map does not affect $H_{N-1}$.
\item If $b_{N-1} = 0$ the map $H_N(A(\bc)) \to H_N((A \uplus e_N)(\bc))$ is split injective and thus $H_N((A \uplus e_N)(\bc)) \cong H_N(A(\bc)) \oplus \bZ$. \end{enumerate}\end{corollary}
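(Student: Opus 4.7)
\emph{Proof plan.} All four statements follow from the spectral sequence of Proposition \ref{propsscompl} applied with $M = \R^n$. Since $\int_{\R^n} X \simeq X$, this gives a spectral sequence
\[E^1_{p,q} = H_{q-(N-1)p}\Bigl(\int^{\bc-p\bk}_{E_p(\R^n;\theta(T\R^n))\downarrow C_p(\R^n;\theta(T\R^n))} A\Bigr) \Longrightarrow H_{p+q}\bigl((A \uplus e_N)(\bc)\bigr),\]
coming from the filtration of $A \uplus e_N$ by number of occurrences of $e_N$. The $F_0$-piece is $A(\bc)$ and the edge map is the natural map $H_*(A(\bc)) \to H_*((A \uplus e_N)(\bc))$.

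Part (i): when $\bc < \bk$, the charges $\bc - p\bk$ are undefined for every $p \geq 1$, so $E^1$ is concentrated in the column $p=0$ and the edge map is an isomorphism. Part (ii): $E^1_{p,q}$ vanishes unless $q \geq (N-1)p$, which for total degree $n \leq N-2$ forces $p = 0$; moreover the same inequality $n+1 \geq Nr$ needed for a nonzero source $E^r_{r, n-r+1}$ of an incoming differential ($r \geq 1$) fails, so the edge map is an iso.

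Part (iii): apply the long exact sequence of $0 \to A \to A \uplus e_N \to Q \to 0$. Restricting the spectral sequence to $p \geq 1$ computes $H_*(Q(\bc))$. By the range condition, $H_{N-1}(Q(\bc)) = 0$ ($E^1_{1,N-2} = H_{-1} = 0$ and $p \geq 2$ is excluded), while in degree $N$ the sole surviving entry is $E^1_{1,N-1} = H_0\bigl(\int^{\bc-\bk}_{E_1 \downarrow C_1} A\bigr) = \bZ$. This $\bZ$ is nonzero because $\bc \geq \bk$ and because the base $C_1(\R^n;\theta(T\R^n))$ and the fiber $\R^n \setminus \{\mathrm{pt}\}$ are both connected (using Lemma \ref{lemthetaconn}), so that $H_0$ of the fiberwise topological chiral homology with charge-wise connected coefficients is $\bZ$. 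This yields the 5-term exact sequence. To identify the connecting homomorphism $\bZ \to H_{N-1}(A(\bc))$, one picks a representative $a \in A(\bc - \bk) = A(\bc')$ of the generator of $H_0(A(\bc'))$ used to define $t_{\bc'}$, lifts the $H_0$-generator of $Q$ to the chain $a \cdot e_N$ in $A \uplus e_N$, and observes that its differential is $a \cdot b_{N-1}$, whose homology class is $t_{\bc'}([b_{N-1}])$ by definition of the stabilization map.

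Part (iv): the proof of Proposition \ref{propsscompl} shows that when $b_{N-1} = 0$ the underlying chain complex splits as a direct sum indexed by the number of $e_N$'s. The $p=0$ summand is $A(\bc)$, so $H_N(A(\bc)) \hookrightarrow H_N((A \uplus e_N)(\bc))$ is the inclusion of a direct summand, split by projection; the range condition $Np \leq N$ rules out $p \geq 2$, leaving the complementary summand $E^1_{1,N-1} = \bZ$. The main obstacle is the chain-level identification of the connecting map in (iii), which requires unpacking the completion construction of Definition \ref{defpartialalgebracomp} to track the generator of $E^1_{1,N-1}$ and its boundary in $A \uplus e_N$.
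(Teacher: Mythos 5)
Your argument for (i)--(iii) is essentially the paper's own proof: both specialize the spectral sequence of Proposition \ref{propsscompl} to $M=\R^n$ (using $\int_{\R^n}A\simeq A$) and read off the vanishing and identification of the low $E^1$-entries, which the paper records as its observations (a)--(d); your chain-level identification of the connecting homomorphism as $1\mapsto t_{\bc'}(b_{N-1})$ spells out a step the paper leaves implicit, and it is correct. For (iv) you take a slightly different route: the paper uses Lemma \ref{lemattachmap} to produce a retraction $A\uplus e_N\to A$ sending $e_N\mapsto 0$, which splits $H_*(A(\bc))\to H_*((A\uplus e_N)(\bc))$, whereas you invoke the direct-sum decomposition by the number of $e_N$'s appearing in the proof of Proposition \ref{propsscompl} when $b_{N-1}=0$; both are valid, the paper's retraction being marginally cleaner since it does not require re-opening that proof. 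Two cosmetic points: the sequence $0\to A\to A\uplus e_N\to Q\to 0$ is only exact after replacing $A$ by the filtration stage $F_0\simeq\bar A_\bullet$ (quasi-isomorphic to $A$), so the long exact sequence should be run for that pair; and ``the $H_0$-generator of $Q$'' should read ``the generator of $H_N(Q)$'' (the class arising from $E^1_{1,N-1}=H_0$ of the fiberwise topological chiral homology).
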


\begin{proof}Parts (i), (ii) and (iii) are a consequence of studying the initial parts of spectral sequences in Proposition \ref{propsscrtheta} and Proposition \ref{propsscompl} as follows. For general $M$, on the $E^1$-page of the spectral sequence of Proposition \ref{propsscompl} we have that (a) $E^1_{0,q} = H_q(\int^\bk_M A)$, (b) $E^1_{p,q} = 0$ if $p \geq 1$ and $q \leq N-2$, (c) $E^1_{p,q} = 0$ if $p \geq 1$ if $\bc<\bk$, and (d) $E^1_{1,N-1} = \bZ$ if there exists a $\bc'$ such that $\bc=\bc'+\bk$. Now specialize to $M=\R^n$ and use that $A(\bc)=\int^\bc_{\R^n} A$. 

For part (iv), one just needs to note that if $b_{N-1} = 0$ there is a map $A \uplus e_N \to A$ sending $e_N$ to $0$, which on homology splits the map $A \simeq \bar{A}_\bullet \to A \uplus e_N$.\end{proof}

\begin{remark} The results in this subsection have analogues for the attachment of any collection of $\cdt$-cells of the same dimension and charge, as in Remark \ref{remmultiplecells0}. In that case $C_r^\theta(M)$ and its relatives get replaced by configuration spaces with labels in the indexing set of the $\cdt$-cells, and further modifications are as one expects. \label{remmultiplecells1}
\end{remark}

\subsection{$\cdt$-cell attachments preserve homological stability} Computing the homology in higher degrees is harder, but we can use the spectral sequences to deduce that $\cdt$-cell attachments preserve homological stability. To do so, we restrict ourselves to monoids of charges given by $C = \bN_0^d$. We will prove a slightly stronger statement than we will eventually need.

\begin{corollary}\label{corattachstab} Let $C = \bN_0^d$, $n \geq 2$ and let $M$ be a $\theta$-framed non-compact connected manifold. Suppose that $A$ is a charged algebra such that for each $r \geq 0$ the space $\int_{M \setminus \{r \text{ points}\}} A$ has homological stability in a range that does not depend on $r$, then $\int_M (A \uplus e_N)$ has homological stability.

Moreover, fix $1 \leq i \leq d$ and let $\rho \colon C \m \R_{\geq 0}$ be a function satisfying (i) $\rho(\mathbf{a})<\rho(\mathbf{b})$ if $a_i < b_i$ and $a_j \geq b_j$ for $j \neq i$, (ii) $\rho(\mathbf{a}+\mathbf{b}) \leq \rho(\mathbf{a}) + \rho(\mathbf{b})$ for all $\mathbf{a},\mathbf{b} \in C$. Let $e_N$ be attached in charge $\bk$ with $\rho(\bk) \leq N$. Assume that for all $r \geq 0$ we have 
\[t_i \colon \int^{\bc}_{M \setminus \{r\text{ points}\}} A \m \int^{\bc+\be_i}_{M \setminus \{r \text{ points}\}} A\]
is a $\rho(\bc)$-equivalence. Then 
\[t_i \colon \int^{\bc}_M A \uplus e_N \m \int^{\bc+\be_i}_M A \uplus e_N\] is also a $\rho(\bc)$-equivalence. 

\end{corollary}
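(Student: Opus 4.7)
The strategy is to compare the spectral sequence of Proposition \ref{propsscompl} computing $H_*(\int^\bc_M A \uplus e_N)$ with the analogous one for $\int^{\bc+\be_i}_M A \uplus e_N$ via the stabilization map $t_i$. First I would observe that topological chiral homology, and more generally the fiberwise TCH $\int_{E_p(M;\theta(TM)) \downarrow C_p(M;\theta(TM))}(-)$, is functorial in $\theta$-framed embeddings of the underlying manifold, and that the $\theta$-framed embedding $\psi: \bR^n \sqcup M \hookrightarrow M$ witnessing a non-compact end of $M$ lifts to a fiberwise embedding over each configuration space. This gives stabilization self-maps on every term $\int^{\bc - p\bk}_{E_p \downarrow C_p} A$ appearing on the $E^1$-page of Proposition \ref{propsscompl}, compatible with the filtration by number of $e_N$-cells. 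Hence $t_i$ yields a map of spectral sequences whose action on $E^1_{p,q}$ is the basic stabilization map $H_{q-(N-1)p}(\int^{\bc-p\bk}_{E_p \downarrow C_p} A) \to H_{q-(N-1)p}(\int^{\bc+\be_i-p\bk}_{E_p \downarrow C_p} A)$.

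Next I would apply Lemma \ref{lemssserrefib} to express each $E^1_{p,q}$ via a Serre-type spectral sequence with base $C_p(M;\theta(TM))$ (on which $t_i$ acts trivially, being induced by a fiberwise embedding) and fiber $\int^{\bc-p\bk}_{M\backslash\{p\,\text{points}\}} A$. The hypothesis that the basic stabilization map on this fiber is a $\rho(\bc-p\bk)$-equivalence for every $r$, applied with $r=p$, makes the map of $E^2$-pages an isomorphism for $b \leq \rho(\bc-p\bk) - 1$ and a surjection for $b = \rho(\bc-p\bk)$. Standard spectral sequence comparison then yields a $\rho(\bc-p\bk)$-equivalence on the abutment, i.e.\ on $H_*(\int^{\bc-p\bk}_{E_p \downarrow C_p} A)$.

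Subadditivity of $\rho$ together with $\rho(\bk) \leq N$ gives $\rho(\bc - p\bk) \geq \rho(\bc) - pN$. Therefore on the $E^1$-page of Proposition \ref{propsscompl} the map $t_i$ is an isomorphism whenever $q - (N-1)p \leq \rho(\bc) - pN - 1$, equivalently $p + q \leq \rho(\bc) - 1$, and is surjective whenever $p + q \leq \rho(\bc)$. A second application of spectral sequence comparison, now to the spectral sequence of Proposition \ref{propsscompl}, delivers the desired conclusion that $t_i: H_*(\int^\bc_M A \uplus e_N) \to H_*(\int^{\bc+\be_i}_M A \uplus e_N)$ is a $\rho(\bc)$-equivalence. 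The unquantified first statement of the corollary follows from the moreover part by manufacturing, from the given uniform stability range for punctured TCH, a function $\rho: C \to \R_{\geq 0}$ satisfying the technical monotonicity and subadditivity conditions and $\rho(\bk) \leq N$, for instance by truncating and combining the one-variable ranges.

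I expect the principal technical obstacle to be the construction of the fiberwise ``particle from infinity'' embedding for $E_p(M;\theta(TM)) \to C_p(M;\theta(TM))$, natural enough to induce a map of the two spectral sequences and restricting on each fiber to the stabilization map appearing in the hypothesis. Because configurations can accumulate anywhere in $M$, the chosen end of $M$ and the accompanying isotopy of collars must be organized coherently over the whole configuration space; an honest proof will need to exhibit such a coherent fiberwise end and verify its compatibility with the filtration by $e_N$-cells underlying Proposition \ref{propsscompl}.
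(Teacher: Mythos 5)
Your overall strategy is the paper's: compare the spectral sequence of Proposition \ref{propsscompl} for $\int^{\bc}_M A \uplus e_N$ and $\int^{\bc+\be_i}_M A\uplus e_N$ via $t_i$, feed in stability for the fiberwise terms through the Serre-type spectral sequence of Lemma \ref{lemssserrefib}, and close with spectral sequence comparison using $\rho(\bc-p\bk)\geq \rho(\bc)-p\rho(\bk)\geq\rho(\bc)-pN$. That part is correct and is exactly the argument in the paper. Your worry about constructing a coherent fiberwise end over $C_p(M;\theta(TM))$ is also not a real obstacle: in the bar-construction model the stabilization map is defined globally on $\lVert\mathbf{M}^\theta\mr{Comp}_\bullet\rVert$ by composing with the fixed embedding $\psi:\bR^n\sqcup M\to M$ and labelling the new disk by $a_i\in A$; since this does not create new $e_N$'s it preserves the filtration by the number of $e_N$'s, so the map of spectral sequences (and of the local systems in Lemma \ref{lemssserrefib}) comes from naturality, with no fiberwise end to build.

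The genuine gap is in your blanket claim about the $E^1$-page. Your inequality $\rho(\bc-p\bk)\geq\rho(\bc)-pN$ only makes sense when the charge $\bc-p\bk$ exists, i.e.\ all coordinates $c_j-pk_j$ are nonnegative. There are columns where $\bc-p\bk$ is undefined but $\bc+\be_i-p\bk$ is defined, namely when $c_i-pk_i=-1$ and $c_j-pk_j\geq 0$ for $j\neq i$; there the source entry is the zero complex while the target entry $H_{q-(N-1)p}\bigl(\int^{\bc+\be_i-p\bk}_{E_p\downarrow C_p}A\bigr)$ need not vanish, so surjectivity in total degrees $p+q\leq\rho(\bc)$ is not automatic and your argument says nothing about it. This is precisely where hypothesis (i) on $\rho$ (which you never use) enters: in that case $\rho(\bc)<\rho(p\bk)\leq p\rho(\bk)\leq pN$, while the target entry can only be nonzero when $q-(N-1)p\geq 0$, i.e.\ $p+q\geq pN$, so the target vanishes in the range $p+q\leq\rho(\bc)$ and the comparison goes through. (The case where both charges are undefined is trivially fine.) Without this case analysis the claimed surjectivity on the $E^1$-page, and hence the comparison step, is unjustified; with it, your proof coincides with the paper's.
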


\begin{proof}First note that the second part of this corollary implies the first. The idea of the proof is to apply the two spectral sequences of Propositions \ref{propsscrtheta} and \ref{propsscompl} above. 

Firstly, we prove that  $H_*\left(\int^{\bj}_{E_r^\theta(M) \downarrow C_r^\theta(M),\cL} A\right)$ has homological stability in the same range as $H_*\left(\int^{\bj}_{M \setminus \{r\text{ points}\}} A\right)$. To do so we apply Proposition \ref{propsscrtheta} to the  stabilization map 
\[H_*\left(\int^{\bj}_{E_r^\theta(M)\downarrow C_r^\theta(M),\cL} A\right) \to H_*\left(\int^{\bj+\be_i}_{E_r^\theta(M)\downarrow C_r^\theta(M),\cL} A\right).\] 
The stabilization map induces a map of spectral sequences converging to the domain and target of the stabilization map, which on the $E^2$-pages
\[\xymatrix{E^2_{p,q} = H_p\left(C_{r}^\theta(M);\cH_q\left(\int^{\bj}_{M \setminus \{\text{$r$ points}\}} A\right) \otimes \cL\right) \ar[d] \\ (E')^2_{p,q} = H_p\left(C_{r}^\theta(M);\cH_q\left(\int^{\bj+\be_i}_{M \setminus \{\text{$r$ points}\}} A\right) \otimes \cL\right)}\]
is induced by the map $\cH_q\left(\int^{\bj}_{M \setminus \{\text{$r$ points}\}} A\right) \otimes \cL \to \cH_q\left(\int^{\bj+\be_i}_{M \setminus \{\text{$r$ points}\}} A\right) \otimes \cL$ of local system. By hypothesis, if we fix a $Q$ for $j \gg 0$ this is an isomorphism for all $q \leq Q$, and hence the map on $E^2$-pages is an isomorphism for $q \leq Q$ as well. A spectral sequence comparison argument then implies the stabilization map is an isomorphism for $q \leq Q$.

If we make this argument quantitative, we get that for all $r$ and all $\bj$, the map 
\[t \colon \int^{\bj}_{E_r^\theta(M) \downarrow C_r^\theta(M),\cL} A \m \int^{\bj+\be_i}_{E_r^\theta(M) \downarrow C_r^\theta(M),\cL} A\]
is a $\rho(\bj)$-equivalence.

Next, we prove the corollary being applying the spectral sequence of Proposition \ref{propsscompl}. The stabilization map induces a map of spectral sequences converging to the induced map $H_*(\int^{\bc}_M A \uplus e_N) \to H_*(\int^{\bc+\be_i}_M A \uplus e_N)$. On the $E^1$-pages it is given by the following.
\[\xymatrix{E^1_{p,q} = H_{q-(N-1)p}\left(\int^{\bc-p\bk}_{E_p^\theta(M) \downarrow C_p^\theta(M),\bZ_{\pm 1}^{\otimes N}} A\right) \ar[d] \\ (E')^1_{p,q} = H_{q-(N-1)p}\left(\int^{\bc+\be_i-p\bk}_{E_p^\theta(M)) \downarrow C_p^\theta(M),\bZ_{\pm 1}^{\otimes N}} A\right)}\]

We will first prove it is a surjection for $p+q \leq \rho(\bc)$. There are now three cases to consider: (i) $c_i-pk_i<-1$ or $c_j - pk_j \leq -1$ for all $j \neq i$, (ii) $c_i-pk_i=-1$ and $c_j - pk_j \geq 0$ for all $j \neq i$, and (iii) $c_j-pk_j \geq 0$ for all $j$.
\begin{enumerate}[(i)]
\item In the first case we are considering a map between zero chain complexes, which is an isomorphism.
\item In the second case we are considering a map from a zero chain complex to a non-zero one. This is obviously an isomorphism in negative homological degrees, i.e. if $q-(N-1)p < 0$, but we need it to be a surjection for all $p+q \leq \rho(\bc)$. Note that $p\rho(\bk) \geq \rho(p\bk)$ by part (ii) of our assumptions on $\rho$ and $\rho(p\bk) > \rho(\bc)$ by part (i) of our assumptions on $\rho$ since $c_i = pk_i - 1 < pk_i$ but $c_j = pk_j$. So it suffices to prove that it is a surjection for $p+q \leq p\rho(\bk)$ if $q-(N-1)p \geq 0$. This is true because the inequality $q-(N-1)p \geq 0$ is equivalent to $p+q \leq pN$, and we assumed that $\rho(\bk) \leq N$.
\item Finally, in the third case, the map on the $(p,q)$-entry of $E^1$-page is an surjection if $q-(N-1)p \leq \rho(\bc-p\bk)$, or in other words if $p+q \leq \rho(\bc-p\bk)+Np$. We want a surjection for all $p+q \leq \rho(\bc)$, which happens if $\rho(\bc) \leq \rho(\bc-p\bk)+Np$ for all $p$. By our assumption regarding where the $\cdt$-cell is attached, $N \geq \rho(\bk)$ and this inequality is implied by $\rho(\bc) \leq \rho(\bc-p\bk) + \rho(\bk)p$ for all $p$. This follows from $\rho(\mathbf{a}+\mathbf{b}) \leq \rho(\mathbf{a})+\rho(\mathbf{b})$ for all $\mathbf{a},\mathbf{b} \in C$.
\end{enumerate} 

Thus the map on the $E^1_{pq}$-pages is a surjection for $p+q \leq \rho(\bc)$. A similar argument shows that it is an isomorphism for $p+q \leq \rho(\bc)-1$. The corollary now follows from spectral sequence comparison.\end{proof}

\begin{remark} Using Remark \ref{remmultiplecells1}, the results in this subsection have analogues for the attachment of any collection of $\cdt$-cells of the same dimension and charge, as in Remark \ref{remmultiplecells0}. \label{remmultiplecells2}
\end{remark}

\subsection{Bounded generation} The definitions of the previous subsection lead us to the definition of bounded generation, which we will only give for $C = \bN_0^d$. We treat the degree zero case separately and put restriction on the $\cdt$-cells of degree one to make sure we stay in the category of charged algebras.

\begin{definition}\label{defboundedcellular} Let the monoid of charges be given by $C = \bN_0^d$. \begin{enumerate}[(i)]
\item We say that a charged algebra $A$ is \emph{obtained from $A'$ by $\cdt$-cell attachments of degree $N$ in charge $\leq \bc$} if there exists a possibly infinite collection of $\cdt$-cells $\{e^N_i\}_{i \in I}$ attached along $\{b_i\}_{i \in I}$ of homological degree $N-1$ and charge $\leq \bc$, such that there is an isomorphism
\[A' \uplus \biguplus_{i \in I} e_i^N \to A\]
If $N=1$ we require the attaching maps to be trivial, so that the underlying $C$-charged chain complexes remain connected.
\item We say that a $C$-charged algebra $A$ is \emph{bounded cellular of degree $\leq 0$} if $A = 0 \uplus \biguplus_{i=1}^d e^0_i$ with each $e^0_i$ of degree $0$.
\item We say that a charged algebra $A$ is \emph{bounded cellular of degree $\leq N$} if $A$ is obtained from an $\cdt$-algebra which is bounded cellular of degree $\leq 0$, by $\cdt$-cell attachments of $\cdt$-cells such that those of homological degrees $n \leq N$ are attached in finitely many charges $\bk$.
\end{enumerate}\end{definition}

We can now define  bounded generation, the notion appearing in the main theorem.

\begin{definition}\label{defboundedgeneration} \begin{enumerate}[(i)]
\item A charged algebra $A$ is said to be \emph{bounded generated in degrees $\leq N$} if there is an $(N+1)$-equivalence $f \colon B \m A$ with $B$ bounded cellular of degree $\leq N$. 
\item It is said to be \emph{bounded generated} if it is  bounded generated in degrees $\leq N$ for all $N \geq 0$. \end{enumerate}\end{definition}


\begin{remark} \label{cellsintop} The previous definitions and constructions can be modified for $\cdt$-algebras in $\cat{Top}$ and $\cat{sSet}$. For example, to give the translation from spaces to chain complexes, one simply thinks of $S^{N-1}$ as the chain complex with a single generator $e_{N-1}$ in degree $N-1$ and $D^N$ as the chain complex with two generators $e_N$ and $e_{N-1}$ in degrees $N$ and $N-1$ respectively, with $d(e_N) = e_{N-1}$. In the topological case the input thus consists of an $\cdt$-algebra $X$ and a map $S^{N-1} \to X$, using which one can define a partial $\cdt$-algebra $X \cup D^N$ and its completion denoted by $X \uplus D^N$. The analogues of Corollaries \ref{corattachexactseq} and \ref{corattachstab} are then true. In simplicial sets, a concern is that $X$ may not be fibrant as a simplicial set; we need to attach a subdivision of $\Delta^N$ along an iterated subdivision of $\partial \Delta^N$.\end{remark}

\section{The proof of the main theorem}\label{secproof} In this section, we prove the main theorem (Theorem \ref{thmmain}) and strengthen it to give a result for homological stability in explicit ranges. In this section we fix the tangential structure $\theta$ and set the monoid of charges $C$ to be $\bN_0^d$, and hence suppress them from our notation whenever convenient.

\begin{definition}For $\bk \in \bN_0^d$, we let $|\bk|$ denote $\max\{k_i\}$. We call this the \emph{maximal charge}.\end{definition}

\begin{theorem}\label{thmmainrange} Let $n \geq 2$, $C = \bN_0^d$, $A$ be a charged algebra and $\rho \colon \bR_{\geq 0} \m \R_{\geq 0}$ be a strictly increasing function with (i) $\rho(x+y) \leq \rho(x)+\rho(y)$ for all $x,y \in \bR_{\geq 0}$, (ii) $\rho(x) \leq x/2$ and (iii) $\rho^{-1}(\N_0) \subset \N_0$. The following are equivalent:
\begin{enumerate}[(i)]
\item For all $\bk \in C$ and each $i$, the basic stabilization maps $t_i \colon A(\bk) \m A(\bk+\be_i)$ are $\rho(k_i)$-equivalences. 
\item For any $\theta$-framed  connected non-compact $n$-dimensional manifold, all $\bk \in C$ and each $i$, the basic stabilization maps $t_i \colon \int^{\bk}_M A \m \int_M^{\bk+\be_i} A$ are $\rho(k_i)$-equivalences. 
\item There is a weak equivalence $B \m A$ where $B$ is bounded cellular such that if $B$ has an $\cdt$-cell in positive degree $N$ and with charge $\bk$, then $N \geq \rho(|\bk|)$. 

\end{enumerate}\end{theorem}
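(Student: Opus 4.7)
The plan is to prove the theorem by a simultaneous induction on homological degree $N$ of three truncated versions, denoted $(\mathrm{i})_N$, $(\mathrm{ii})_N$, $(\mathrm{iii})_N$. In $(\mathrm{iii})_N$, I ask only for a bounded cellular $B_{(N)}$ with cells of degree $\leq N$ (each positive-degree cell in charge $\bk$ still required to have $N' \geq \rho(|\bk|)$) such that $B_{(N)} \to A$ is an $N$-equivalence; $(\mathrm{i})_N$, $(\mathrm{ii})_N$ are the corresponding stability statements restricted to homological range $\leq N$. The easy implication $(\mathrm{ii}) \Rightarrow (\mathrm{i})$ follows immediately from $\int_{\R^n} A \simeq A$, so what remains is to prove $(\mathrm{iii})_N \Rightarrow (\mathrm{ii})_N$ and $(\mathrm{i})_N \Rightarrow (\mathrm{iii})_N$ inductively; passing $N \to \infty$ with $B = \bigcup_N B_{(N)}$ then yields the full statement.

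For $(\mathrm{iii})_N \Rightarrow (\mathrm{ii})_N$ I would induct along the cellular filtration of $B_{(N)}$. Each cell of degree $N'$ and charge $\bk$ satisfies $N' \geq \rho(|\bk|)$ by the rho condition, which matches the hypothesis of Corollary \ref{corattachstab} (applied, say, direction by direction using $\tilde{\rho}_i(\bc) = \rho(c_i)$, or via a light variant of that corollary handling the per-direction stability ranges that appear in the theorem). Stability is then preserved through each attachment, and Lemma \ref{lemtchcomp} transports it along the weak equivalence $B \simeq A$. Specialising $M = \R^n$ also gives $(\mathrm{iii})_N \Rightarrow (\mathrm{i})_N$ for free.

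The substantive direction $(\mathrm{i})_N \Rightarrow (\mathrm{iii})_N$ also proceeds by induction on $N$. For the base case $N = 0$, I would take $B_{(0)} = 0 \uplus \biguplus_{i=1}^d e^0_i$ with $e^0_i$ of charge $\be_i$, mapping each $e^0_i$ to a chosen generator of $H_0(A(\be_i)) \cong \bZ$; since $n \geq 2$, the free $\cdt$-algebra on these cells has $H_0(B_{(0)}(\bk)) = \bZ$ in every charge, and degree-zero cells carry no rho constraint, so $B_{(0)} \to A$ is a $0$-equivalence. For the inductive step, given $B_{(N-1)}$, I would apply the already-proven $(\mathrm{iii})_{N-1} \Rightarrow (\mathrm{i})_{N-1}$ to $B_{(N-1)}$ itself (witnessed by the identity) to conclude it has $\rho$-stability up to degree $N-1$. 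Then attach $\cdt$-cells of degree $N$ in two rounds: first non-trivial attachments (via Lemma \ref{lemattachmap}) killing generators of $\ker(H_{N-1}(B_{(N-1)}) \to H_{N-1}(A))$ (vacuous at $N=1$, since the $H_0$-map $\bZ \to \bZ$ is surjective, hence an isomorphism), and then trivial attachments hitting generators of $\mathrm{coker}(H_N(B_{(N-1)}) \to H_N(A))$. Corollary \ref{corattachexactseq} ensures that the resulting $B_{(N)} \to A$ is an $N$-equivalence.

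The hard part and main obstacle will be choosing these kernel and cokernel generators inside the charge range $\rho(|\bk|) \leq N$, as required by the rho condition. The plan is to use a five-lemma argument on the long exact sequence of the cofibre of $B_{(N-1)} \to A$: since both $A$ and $B_{(N-1)}$ have $\rho$-stability, the stabilization maps $t_i$ on $\ker H_{N-1}$ and $\mathrm{coker}\, H_N$ are surjective whenever $\rho(k_i)$ exceeds the relevant homological degree. A class at a charge $\bk$ with some $k_i > \rho^{-1}(N)$ (the largest integer $m$ with $\rho(m) \leq N$, well-defined in $\N_0$ by the assumption $\rho^{-1}(\N_0) \subset \N_0$) must then be the image under $t_i$ of a class at $\bk - \be_i$; iterating in each coordinate direction produces generating sets supported in charges with $|\bk| = \max_i k_i \leq \rho^{-1}(N)$, i.e., $\rho(|\bk|) \leq N$. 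The strict monotonicity and subadditivity $\rho(x+y) \leq \rho(x)+\rho(y)$ of $\rho$ are exactly what make this iterated reduction terminate at the right bound and feed cleanly into the next level of the induction.
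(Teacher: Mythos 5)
Your proposal is correct and its overall architecture is the paper's: the same simultaneous induction over homological degree, the trivial implication (ii)$\Rightarrow$(i) via $\int_{\R^n} A \simeq A$ (Proposition \ref{proptnimpliessn}), the base case $N=0$ as in Proposition \ref{propinit}, the implication (iii)$\Rightarrow$(ii) by induction over the cellular filtration using Corollary \ref{corattachstab} and Lemma \ref{lemtchcomp} (Propositions \ref{propcntinft} and \ref{propboundimpliesstab}), and the hard implication by attaching cells that kill kernels and hit cokernels via Lemma \ref{lemattachmap} and Corollary \ref{corattachexactseq} (Proposition \ref{propstabimpliesbound}). The one genuine divergence is how charges beyond $\rho^{-1}(N)$ are handled in the hard step: the paper first makes the map an equivalence only in total charge $\leq R_{N+1}$, by an induction on total charge alternating an isomorphism step and a surjection step, and then in its Step 3 propagates to all charges using homological stability of the newly built cellular algebra itself (via $\cC^\rho_{N+1} \subset \cS^\rho_\infty$) together with stability of $A$; you instead prove beforehand that $\ker(H_{N-1}(B_{(N-1)}) \to H_{N-1}(A))$ and $\mathrm{coker}(H_N(B_{(N-1)}) \to H_N(A))$ are generated under stabilization in charges $|\bk| \leq \rho^{-1}(N)$, so attaching cells only there fixes every charge at once, in two global rounds, using only the degree-$(N-1)$ stability of $B_{(N-1)}$ from the previous stage. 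Both mechanisms work; yours avoids re-deriving stability for the new algebra at the cost of bookkeeping, where two points need care: the kernel reduction uses not only surjectivity of $t_i$ on $H_{N-1}(B_{(N-1)})$ but also injectivity of $t_i$ on $H_{N-1}(A)$ (your cofibre/five-lemma framing does supply this), and after attaching one must check, via additivity of stabilization and the multi-cell version of Corollary \ref{corattachexactseq}(iii), that the stabilized generators exhaust the kernel in every charge. Also, when you say stability is ``preserved through each attachment'' in (iii)$\Rightarrow$(ii), make explicit that the base of that cellular induction is the free algebra on the degree-zero cells, whose topological chiral homology is $C_{\bk}(M;\theta(TM))$; homological stability for configuration spaces with labels in the frame bundle (Theorem \ref{thmbunstab}, Corollary \ref{corbunmulh}) is precisely where the hypotheses $\rho(x) \leq x/2$, $n \geq 2$ and Lemma \ref{lemthetaconn} enter, so it must be cited as the starting point.
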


Here are some remarks about the consequences and assumptions of this theorem:
\begin{itemize}
\item Theorem \ref{thmmainrange} implies Theorem \ref{thmmain}. To see this, note that in Definition \ref{defboundedgeneration}, if all $\cdt$-cells of degree $\leq N$ are attached in finitely many charges, then there exists a function $\rho \colon \bN_0 \to \bR_{\geq 0}$ with $\lim_{j \to \infty} \rho(j) =\infty$ such that all $\cdt$-cells of degree $N$ are attached in charges $\bk$ with $\rho(|\bk|) \leq N$. Next remark that for any function $\rho \colon \bN_0 \to \bR_{\geq 0}$ with $\lim_{j \to \infty} \rho(j) =\infty$, there exists another function $\rho' \colon \bR_{\geq 0} \to \bR_{\geq 0}$ with $\lim_{j \to \infty} \rho'(j) =\infty$ which moreover satisfies the conditions in Theorem \ref{thmmainrange} and has the property that $\lfloor \rho(j) \rfloor \geq \lfloor \rho'(j) \rfloor $ for all $j \in \N_0$.
\item The homological stability range of $\int_M A$ in general is not as good as that of $A$. In the case of Example \ref{exampsym}, note that even though $\mr{Sym}(\bR^n)$ has a stability range with infinite slope, its topological chiral homology, $\mr{Sym}(M)$, only has a stability range with slope $1$.
\item Note that (i) only depends on the algebra structure on the homology of $A$. Thus if $A$ has $\cdt$-algebra and $\cE_{n'}^{\theta'}$-algebra structures inducing the same algebra structures on homology, this theorem can be used to transfer information about $A$ as an $\cdt$-algebra to $A$ as an $\cE_{n'}^{\theta'}$-algebra.
\item The assumption $n \geq 2$ is required for homological stability with labels in a bundle, the base case of an induction in the proof. No generality is lost, as the case $n=1$ is trivial per the remarks following Theorem \ref{thmmain}.
\item Closely related to  bounded generated $\cdt$-algebras are  degreewise finitely generated framed $\cdt$-algebras. These are cellular algebras with finitely many cells of a given degree. The same proof shows that having homological stability in conjunction with finite type homology, i.e. in each homological degree and charge the homology is finitely generated as an abelian group, is equivalent to degreewise finite generation.
\end{itemize}

We continue with remarks about $\rho$:
\begin{itemize}
\item The definition of $\rho$ as a function $\bR_{\geq 0} \to \bR_{\geq 0}$ instead of a function $\bN_0 \to \bN_0$ merely makes stating its required properties more convenient. 
\item One can give a version of this theorem with different $\rho_i$ for each stabilization map, which is allowed to depend on more than just the $i$th coordinate of the charge.
\item  The condition that $\rho(x) \leq x/2$ stems from the fact that this is the stability range for free $\cdt$-algebras. If one were to invert $2$ in the coefficients of homology, this can be relaxed to $\rho(x) \leq x$ when $n \geq 3$ \cite{kupersmillerimprov} \cite{federicomartin}. Rationally this is Theorem B of \cite{RW}.
\end{itemize}

Assuming Theorem \ref{thmmain}, we will give the proof of the local-to-global homological stability principle, i.e. Corollary \ref{cortopstab}. Recall that $X$ was an $\cdtf$-algebra in $\cat{Top}$ with homological stability and $\pi_0(X) = \bN_0$. We will instead prove it for $\theta$ any framing and $C = \bN_0^d$. 

\begin{proof}[Proof of Corollary \ref{cortopstab}] Since $X$ has homological stability, so does $C_*(X)$. Thus by Theorem \ref{thmmain}, $\int^\bk_M C_*(X)$ has homological stability with $M$ a $\theta$-framed connected non-compact manifold. It now suffices to note that we have that:
\[C_*\left(\int_M X\right) = C_*\left(\left\vert B_\bullet(\mathbf{M}^\theta,\bdt,X)\right\vert\right) \overset{\simeq}{\longleftarrow} \left\vert B_\bullet(\mathbf{M}^\theta,\bdt,C_*(X)) \right\vert = \int_M C_*(X)\]

\noindent The only content here is in the middle weak equivalence, which is Lemma \ref{lemchainssimplicialobjects}. \end{proof}

The proof of Theorem \ref{thmmainrange} involves an induction over the degrees in which we have homological stability. This requires us to make the following definitions. The proof will then follow from determining the interplay of the following four classes of charged algebras.

\begin{definition} Let $\rho \colon \bR_{\geq 0} \m \R_{\geq 0}$ be a strictly increasing function with (i) $\rho(x+y) \leq \rho(x)+\rho(y)$ for all $x,y \in \bR_{\geq 0}$, (ii) $\rho(x) \leq x/2$ and (iii) $\rho^{-1}(\N_0) \subset \N_0$.  We define the following classes of charged algebras:
\begin{enumerate}[(i)]
	\item $\cC_N^\rho$ is the class of bounded cellular algebras (see Definition \ref{defboundedcellular}) with $\cdt$-cells of degree $J \leq N$ and the $\cdt$-cells of degree $j$ attached in charge $\bk$ only if $j \geq \rho(|\bk|)$.
	\item $\cB^\rho_N$ is the class of charged algebras $A$ such that there is an $N$-equivalence $B \to A$ with $B \in \cC^\rho_{N}$.
	\item $\cS^\rho_N$ is the class of charged algebras $A$ that has homological stability range $\rho$ up to degree $N$. That is, the map $t_i \colon A(\bk) \to A(\bk+\be_i)$ is a $\min(N,\rho(k_i))$-equivalence. 
	\item $\cT^\rho_N$ is the class of charged algebras $A$ which on $\theta$-framed  connected non-compact $n$-dimensional manifolds $M$ have homological stability range $\rho$ up to degree $N$. That is, for each $i$ the map $t_i \colon \int^\bk_M A \to \int^{\bk+\be_i}_M A$ is a $\min(N,\rho(k_i))$-equivalence. 
\end{enumerate}

Also let $\cB_{\infty}^\rho$, $\cS_{\infty}^\rho$ or $\cT_{\infty}^\rho$ denote the intersection of respectively $\cB_{N}^\rho$, $\cS_{N}^\rho$ or $\cT_{N}^\rho$ over all $N \in \N_0$. 
\end{definition}

We deduce Theorem \ref{thmmainrange} from the propositions which follow later in this section.

\begin{proof}[Proof of Theorem \ref{thmmainrange}]
Combining Propositions \ref{propinit}, \ref{proptnimpliessn}, \ref{propcntinft} and \ref{propstabimpliesbound}, we see that $\cC_N^\rho=\cS_N^\rho=\cT_N^\rho$ for all $N \geq 0$. Thus $\cC_\infty^\rho=\cS_\infty^\rho=\cT_\infty^\rho$ which is equivalent to Theorem \ref{thmmainrange}.\end{proof}




\subsection{Closure properties}

We start with an easy proposition.

\begin{lemma}\label{lemclasscomp} Let $B \to A$ be an $N$-equivalence. If $B \in \cB^\rho_N$, $\cS^\rho_N$ or $\cT^\rho_N$, then respectively so is $A$.  \end{lemma}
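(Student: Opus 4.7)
The plan is to handle the three assertions separately. Parts (i) and (ii) reduce to a standard four-term diagram chase comparing stabilization squares under an $N$-equivalence, while part (iii) requires a cellular lifting argument and is the main technical challenge.

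For parts (i) and (ii), let $g: B \to A$ denote the given $N$-equivalence. The stabilization maps produce a commutative square
\[
\xymatrix{
B(\bk) \ar[r]^-{t_i} \ar[d]_-{g} & B(\bk+\be_i) \ar[d]^-{g} \\
A(\bk) \ar[r]_-{t_i} & A(\bk+\be_i)
}
\]
whose vertical maps are $N$-equivalences. If $B \in \cS^\rho_N$, then the top horizontal is a $\min(N,\rho(k_i))$-equivalence; since $\min(N,\rho(k_i)) \leq N$, the vertical maps are isomorphisms on $H_j$ for $j \leq \min(N,\rho(k_i))-1$ and surjections on $H_{\min(N,\rho(k_i))}$. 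A direct four-term chase then transfers the equivalence from the top row to the bottom, yielding $A \in \cS^\rho_N$. Part (ii) is identical once Lemma \ref{lemtchcomp} is invoked: it ensures that $\int_M g : \int_M B \to \int_M A$ is likewise an $N$-equivalence, so the analogous square of topological-chiral-homology stabilization maps can be chased in the same way.

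For part (iii), given $A \in \cB^\rho_N$ choose an $N$-equivalence $\phi: C \to A$ with $C \in \cC^\rho_{N+1}$, and write $f: B \to A$ for the given $N$-equivalence. The plan is to construct an algebra map $\psi: C \to B$ which is itself an $N$-equivalence, thereby witnessing $B \in \cB^\rho_N$. We build $\psi$ by induction over the cellular filtration of $C$, using Lemma \ref{lemattachmap} to extend it one cell at a time, maintaining $f\psi$ chain homotopic to the corresponding restriction of $\phi$. On the initial $0$-cells, we send each generator to a representative of the corresponding generator of $H_0(B(\be_i))$. For a cell $e_{N'}$ of degree $N' \leq N$ attached along $b_{N'-1}$, the extension reduces to producing $e \in B$ with $de = \psi(b_{N'-1})$; the image $f(\psi(b_{N'-1}))$ is chain homotopic to $\phi(b_{N'-1}) = d\phi(e_{N'})$, hence a boundary in $A$, and because $f$ is an isomorphism on $H_{N'-1}$ (using $N'-1 \leq N-1$), the cycle $\psi(b_{N'-1})$ is itself a boundary in $B$, providing the required $e$.

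The principal difficulty concerns cells of degree $N+1$: here the attaching cycle $b_N$ yields an obstruction $[\psi(b_N)] \in \ker(f_N) \subset H_N(B)$ which need not vanish, since $f_N$ is only surjective. The idea for resolving this is to absorb the obstructions by modifying $\psi$ on previously attached cells by cycles in $\ker(f)$, since such adjustments preserve the chain-homotopy relation $f\psi \simeq \phi$; a careful bookkeeping argument produces coordinated modifications that annihilate every obstruction simultaneously, permitting all degree-$(N+1)$ cells of $C$ to be attached. Once $\psi: C \to B$ has been built, the verification that it is an $N$-equivalence is a diagram chase: isomorphisms on $H_j$ for $j \leq N-1$ follow from $f\psi \simeq \phi$ combined with $f$ and $\phi$ being isomorphisms in this range, and surjectivity on $H_N$ follows from surjectivity of $\phi_N = f_N \psi_N$ together with the fact that the adjustments above were chosen so that every element of $\ker(f_N)$ also lies in $\mathrm{Im}(\psi_N)$, completing the proof.
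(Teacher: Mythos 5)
Your handling of the first two assertions is correct and is the paper's own argument: compare the stabilization squares along the $N$-equivalence and chase, invoking Lemma \ref{lemtchcomp} to know that $\int_M B \to \int_M A$ is again an $N$-equivalence in the $\cT^\rho_N$ case (modulo the small point, glossed over in the paper as well, that one should choose the stabilizing degree-zero classes compatibly so the square commutes on homology).

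The third assertion is where your proposal goes wrong. The paper's one-line proof ("true by definition") only uses closure of $N$-equivalences under composition: if the \emph{source} of an $N$-equivalence is bounded generated in degrees $\leq N$, then composing the $N$-equivalence $C \to B$ with $C \in \cC^\rho_{N+1}$ with the map $B \to A$ exhibits the target in $\cB^\rho_N$. You instead try to prove the literal target-to-source reading by lifting the cellular model $\phi: C \to A$ through $f: B \to A$ cell by cell. The decisive step — the ``careful bookkeeping argument'' modifying $\psi$ so that all degree-$(N+1)$ obstructions die and moreover $\ker(f_N) \subset \mr{Im}(\psi_N)$ — is not an argument, and in fact no such argument can exist, because that reading of the statement fails. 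Take $A$ to be the free charged algebra on a single degree-zero generator and $B = A \uplus e_N$ a single $\cdt$-cell of degree $N$ with trivial attaching map placed in a charge $k_0$ with $\rho(k_0-1) \geq N$, and let $f: B \to A$ be the map of Lemma \ref{lemattachmap} sending $e_N$ to $0$; by Corollary \ref{corattachexactseq} this $f$ is an $N$-equivalence and $A \in \cC^\rho_0 \subset \cB^\rho_N$. But any $C' \in \cC^\rho_{N+1}$ has its positive-degree cells in charges bounded by $\rho^{-1}(N+1)$ and hence lies in $\cS^\rho_\infty$ by Proposition \ref{propcntinft}; if an $N$-equivalence $C' \to B$ existed, surjectivity on $H_N$ in charge $k_0$ combined with surjectivity of the stabilization map for $C'$ (since $\rho(k_0-1) \geq N$) would force the class of $e_N$ in $H_N(B(k_0))$ to lie in the image of $t: H_N(B(k_0-1)) \to H_N(B(k_0))$, which it does not: in the filtration of Proposition \ref{propsscompl} it lives in filtration one, and filtration one is zero in charge $k_0 - 1$ while $t$ preserves the filtration. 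So $B \notin \cB^\rho_N$, and your strategy of absorbing the obstructions in $\ker(f_N)$ by modifying $\psi$ is doomed: $\ker(f_N)$ can contain classes in arbitrarily large charge that no algebra in $\cC^\rho_{N+1}$ can hit. The only version of the third claim that the paper's proof establishes, and the only one used later, is the composition statement with source and target as above; your proposal should be replaced by that one-line observation.
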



\begin{proof}The statement about $\cB^\rho_N$ is true because the composition of maps which are $N$-equivalences is an $N$-equivalence. 

Let $B \m A$ be an $N$-equivalence and assume $B \in \cS^\rho_N$. Consider the diagram \[\xymatrix{B(\bk) \ar[r] \ar[d] & B(\bk+\be_i) \ar[d]  \\
A(\bk)  \ar[r]& A(\bk+\be_i)}\] Since the top horizontal map is a $\min(N,\rho(k_i))$-equivalence and the vertical maps are $N$-equivalences, the bottom horizontal map is a $\min(N,\rho(k_i))$-equivalence and so we may conclude that $A \in \cS^\rho_N$. 

Now let $B \m A$ be an $N$-equivalence but instead assume $B \in \cT^\rho_N$. Consider the diagram \[\xymatrix{\int^{\bk}_M B \ar[r] \ar[d] & \int_M^{\bk+\be_i} B \ar[d]  \\
\int_M^{\bk} A  \ar[r]& \int_M^{\bk+\be_i} A}\] The top horizontal map is a $\min(N,\rho(k_i))$-equivalence by assumption. Lemma \ref{lemtchcomp} implies that the vertical maps are $N$-equivalences. Thus the bottom horizontal map is a $\min(N,\rho(k_i))$-equivalence and so we may conclude that $A \in \cT^\rho_N$. 



\end{proof}


\subsection{The case $N = 0$} Since our proof of Theorem \ref{thmmainrange} is an induction over $N$, we start with $N=0$.

\begin{proposition}\label{propinit} Any charged algebra is in $\cB^\rho_0 = \cT^\rho_0 = \cS^\rho_0$.\end{proposition}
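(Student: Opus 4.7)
My plan is to prove the three memberships in sequence, using each step to bootstrap the next; at $N=0$ all three conditions amount to asking for a surjection on $H_0$ of some map, which is very weak.

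First I would handle $\cS^\rho_0$ directly. Since $A$ is a connected charged algebra, $H_0(A(\bk)) = \bZ$ for every $\bk$, and the basic stabilization map $t_i$ acts on $H_0$ as multiplication in the graded ring $H_*(A)$ by a chosen generator of $H_0(A(\be_i)) = \bZ$, so it sends generator to generator and is an isomorphism on $H_0$. A useful byproduct is the identification $H_0(A) \cong \bZ[C]$ of the monoid ring as a $C$-graded ring.

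Next I would establish $\cT^\rho_0$ by reducing to a model. The natural $H_0$-projection $\pi \colon A \to \underline{\bZ[C]}$---where $\underline{\bZ[C]}$ denotes the monoid ring viewed as a $C$-charged chain complex concentrated in homological degree $0$---is a $\cdt$-algebra map, since $\underline{\bZ[C]}$ is a commutative algebra (hence a $\cdt$-algebra for $n \geq 2$) and the previous step identified the induced multiplicative structure. As $\pi$ is a $0$-equivalence, Lemma \ref{lemtchcomp} gives a $0$-equivalence $\int_M \pi$, natural with respect to $t_i$. Using that $\mr{Sym}_\bk(\bR^n)$ is contractible I would identify $\underline{\bZ[C]} \simeq C_*(\bigsqcup_\bk \mr{Sym}_\bk(\bR^n))$ as $\cdt$-algebras and, via Example \ref{exampsym}, obtain $\int_M \underline{\bZ[C]} \simeq C_*(\bigsqcup_\bk \mr{Sym}_\bk(M))$; each $\mr{Sym}_\bk(M)$ is connected, so $H_0 = \bZ$ in every charge, and $t_i$ corresponds to a basepoint inclusion $\mr{Sym}_\bk(M) \hookrightarrow \mr{Sym}_{\bk+\be_i}(M)$, which is iso on $\pi_0$.

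For $\cB^\rho_0$, I take $B = \bdt(\bigoplus_{i=1}^d \bZ \cdot e^0_i)$, the free $\cdt$-algebra on $d$ generators of homological degree $0$ at charges $\be_1, \ldots, \be_d$. By Definition \ref{defboundedcellular}(ii), $B$ is bounded cellular of degree $\leq 0$, and since it has no positive-degree cells the constraint $j \geq \rho(|\bk|)$ in $\cC^\rho_1$ is vacuous, placing $B$ in $\cC^\rho_1$. The universal property supplies a $\cdt$-algebra map $f \colon B \to A$ sending each $e^0_i$ to a $0$-cycle generator $a_i$ of $H_0(A(\be_i))$; connectedness of $\cdt(k)$ for $n \geq 2$ makes $H_0(B(\bk)) = \bZ$, generated by the class of $(e^0_1)^{k_1} \cdots (e^0_d)^{k_d}$, which $f$ maps to the corresponding product in $H_0(A(\bk))$, a generator by the $\cS^\rho_0$ step. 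Hence $f$ is a $0$-equivalence, giving $A \in \cB^\rho_0$.

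The only not-quite-routine input is the model identification $\int_M \underline{\bZ[C]} \simeq C_*(\mr{Sym}^\bullet M)$ used in $\cT^\rho_0$, handled via Example \ref{exampsym} and the contractibility of $\mr{Sym}_\bk(\bR^n)$; the remaining content is generator tracking on $H_0$.
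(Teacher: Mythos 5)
Your $\cS^\rho_0$ and $\cB^\rho_0$ steps are essentially the paper's argument (the paper also takes the free $\cdt$-algebra $B$ on $d$ degree-zero generators, notes it is bounded cellular of degree $\leq 0$, and uses the chosen representatives $a_i$ to get a $0$-equivalence $B \to A$). The problem is your $\cT^\rho_0$ step. You compare $A$ with the model $\underline{\bZ[C]} \simeq C_*(\bigsqcup_\bk \mr{Sym}_\bk(\bR^n))$ via the projection $\pi\colon A \to \underline{\bZ[C]}$, i.e.\ via a map \emph{out of} $A$. After applying $\int_M$ you then know: (a) the stabilization maps on $\int_M \underline{\bZ[C]} \simeq C_*(\mr{Sym}(M))$ are isomorphisms on $H_0$, and (b) $\int_M \pi$ is surjective on $H_0$ (Lemma \ref{lemtchcomp}). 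But a $0$-equivalence is a statement of surjectivity on $H_0$, and from a square in which the two vertical maps point from the $A$-row to the model row and are surjective on $H_0$, together with surjectivity of the model-row horizontal map, you cannot conclude surjectivity of the $A$-row horizontal map: if $g,h,f'$ are surjective and $h\circ f = f'\circ g$, $f$ need not be surjective (e.g.\ $f$ the inclusion $\bZ \hookrightarrow \bZ^2$, $h$ a projection). This is precisely why Lemma \ref{lemclasscomp} is stated only for $N$-equivalences $B \to A$ \emph{into} the algebra in question; your comparison map has the wrong direction, and there is no strict algebra-level section $\underline{\bZ[C]} \to A$ to reverse it. In other words, your argument shows $H_0(\int^\bk_M A)$ surjects onto $\bZ$, but not that the stabilization maps on $\int_M A$ are surjective on $H_0$.

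The fix is cheap and is what the paper does: use the free algebra $B$ you already built for the $\cB^\rho_0$ step, whose comparison map $B \to A$ points the right way. One identifies $\int^\bk_M B$ with the chains on the colored configuration space $C_\bk(M;\theta(TM))$ with labels in the frame bundle (this is the initial case in the proof of Proposition \ref{propcntinft}); these spaces are connected because $M$ is connected and the fibers of $\theta(TM)$ are connected (Lemma \ref{lemthetaconn}), so $H_0(\int^\bk_M B) \cong \bZ$ and $B \in \cT^\rho_0$. Then Lemma \ref{lemclasscomp} applied to the $0$-equivalence $B \to A$ gives $A \in \cT^\rho_0$: in the relevant square the surjective composite $H_0(\int^\bk_M B) \to H_0(\int^{\bk+\be_i}_M A)$ factors through the stabilization map for $A$, which is therefore surjective. (Your symmetric-power model could also be made to work, but only if you produce a map from a cofibrant/free-type model \emph{into} $A$, which is exactly the role $B$ plays; as written, the reduction to $\mr{Sym}(M)$ does not close the argument.)
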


\begin{proof} By definition, charged algebras are augmented and connected. Thus we have a chosen isomorphism $H_0(A) \cong \bZ[\bN_0^d]$, which is in $\cS^\rho_0$. We claim it is in $\cB^\rho_0$ as well. To see this, take any representative $a_i \in A(\be_i)$ of the chosen generator of $H_0(A(\be_i)) \cong \bZ$. Let $B$ be the free $\cdt$-algebra on $d$ generators in degree $0$. This is in $\cC^\rho_0$ by definition. The choice of $a_i \in A(\be_i)$ gives a map $B \m A$ which induces an isomorphism on $H_0$ (and hence is a $0$-equivalence). Thus, $A \in \cB^\rho_0$.

By the proof of Proposition \ref{propcntinft}, $H_0(\int^{\bk}_M B) \cong \bZ$ for $M$ connected. This shows that $B \in \cT^\rho_0$. Since $B \m A$ is a 0-equivalence, Lemma \ref{lemclasscomp} shows that $A$ is in $ \cT^\rho_0$ as well. By Proposition \ref{proptnimpliessn}, we have $\cT^\rho_0 \subset \cS^\rho_0$.\end{proof}

\subsection{Stability for topological chiral homology implies stability}
Another easy proposition says that homological stability of topological chiral homology on any open manifold implies it for $\bR^n$.

\begin{proposition}\label{proptnimpliessn}For any $N \geq 0$ we have that $\cT^\rho_N \subset \cS^\rho_N$.\end{proposition}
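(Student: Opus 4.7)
The plan is to specialize the topological chiral homology hypothesis to the manifold $M = \bR^n$ and then identify the two notions of stabilization map under the equivalence $\int_{\bR^n} A \simeq A$ already established earlier in the paper.

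First, since $\bR^n$ is a $\theta$-framed connected non-compact $n$-dimensional manifold (using the fixed $\theta$-framing of $\bR^n$), the assumption $A \in \cT^\rho_N$ gives that for every $\bk$ and $i$, the map $t_i: \int^\bk_{\bR^n} A \to \int^{\bk+\be_i}_{\bR^n} A$ is a $\min(N,\rho(k_i))$-equivalence. Using the lemma $\int_{\bR^n} A \simeq A$, which arises from the extra degeneracy on $B_\bullet(\bdt,\bdt,A)$, we obtain quasi-isomorphisms $\int^\bk_{\bR^n} A \simeq A(\bk)$ compatible with the charge decomposition.

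The main thing to check is that, under this identification, the stabilization map $t_i$ on $\int_{\bR^n} A$ coincides (up to quasi-isomorphism and the choice of generator $a_i \in A(\be_i)$) with the basic stabilization map $t_i: A(\bk) \to A(\bk+\be_i)$ defined in Subsection \ref{subcharged} via multiplication by $a_i$. This is a functoriality check: the stabilization map on topological chiral homology is induced by composition with a $\theta$-framed embedding $\psi: \bR^n \sqcup M \to M$, and under $M = \bR^n$ the resulting map $\int_{\bR^n} A \otimes \int_{\bR^n} A \to \int_{\bR^n} A$ corresponds, via the extra degeneracy equivalence, to the $\cdt$-algebra multiplication $A \otimes A \to A$ (since the embedding $\psi$ is just an element of $\cdt(2)$ up to the path-connectedness of $\mr{Emb}^\theta(\bR^n \sqcup \bR^n, \bR^n)$, which follows from Lemma \ref{lemthetaconn}). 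Multiplication by a fixed representative $a_i$ on one factor then gives the basic stabilization map on $A$.

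Having identified the two stabilization maps, the two-out-of-three property for $N$-equivalences in a commutative square (used for instance in Lemma \ref{lemclasscomp}) immediately transfers the $\min(N,\rho(k_i))$-equivalence property from $t_i$ on $\int^\bk_{\bR^n} A$ to $t_i: A(\bk) \to A(\bk+\be_i)$, showing $A \in \cS^\rho_N$. The main (and only) obstacle is this bookkeeping identification of the two stabilization maps; once that is in place, the inclusion $\cT^\rho_N \subset \cS^\rho_N$ is immediate.
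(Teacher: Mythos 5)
Your proposal is correct and follows essentially the same route as the paper, whose proof is simply to note that $\bR^n$ is a $\theta$-framed connected non-compact manifold and that $\int_{\bR^n} A \simeq A$. The extra care you take in identifying the stabilization map on $\int_{\bR^n} A$ with the basic stabilization map on $A$ is a reasonable elaboration of a compatibility the paper leaves implicit, but it does not change the argument.
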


\begin{proof} We have $\int_{\R^n} A \simeq A$ by Lemma \ref{lemtchrn}, and $\R^n$ is a $\theta$-framed connected non-compact manifold.\end{proof}

\subsection{Level wise bounded generation implies stability for topological chiral homology} Next we prove that bounded generation implies homological stability for topological chiral homology of $A$ on any $\theta$-framed  connected non-compact $n$-dimensional manifold $M$. See Definition \ref{colconfig} for the definition of configuration spaces labeled in a fibration.

\begin{proposition}\label{propcntinft} For any $N$ we have that $\cC^\rho_N \subset \cT^\rho_\infty$ and thus $\cC^\rho_N \subset \cS^\rho_\infty$.\end{proposition}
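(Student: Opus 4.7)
The plan is to induct on the cellular structure of $A$, with Corollary \ref{corattachstab} as the engine. In $\cC_N^\rho$, the condition $j \geq \rho(|\bk|)$ together with $\rho$ strictly increasing and unbounded forces the $\cdt$-cells of each homological degree to sit in a finite set of charges, so I may filter $A = \mathrm{colim}_N A_{(N)}$, with $A_{(0)} := 0 \uplus \biguplus_{i=1}^d e^0_i$ and $A_{(N)}$ obtained from $A_{(N-1)}$ by attaching all $\cdt$-cells of degree $N$ (their attaching maps live in $A_{(N-1)}$ since they have homological degree $N-1$). For the base case, $\int_M A_{(0)}$ is quasi-isomorphic to the chains on the labeled configuration space $\bigsqcup_\bk C_\bk(M; \theta(TM) \times \{1,\ldots,d\})$, whose labels live in a connected-fiber bundle by Lemma \ref{lemthetaconn}. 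Classical homological stability for labeled configurations on $\theta$-framed connected non-compact $n$-manifolds, following McDuff, Segal, Palmer, and Randal--Williams, gives a stability range of slope $\tfrac{1}{2}$ in each coordinate direction, dominating $\rho(k_i) \leq k_i/2$; crucially, this range is uniform across $M \setminus \{r \text{ points}\}$, which is still a $\theta$-framed connected non-compact $n$-manifold since $n \geq 2$. Thus $A_{(0)} \in \cT_\infty^\rho$, uniformly in punctures.

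For the inductive step, assume $A_{(N-1)} \in \cT_\infty^\rho$ uniformly on every $M \setminus \{r \text{ points}\}$. I group the degree-$N$ cells of $A_{(N)}$ by their charge $\bk$ (finitely many) and, at each fixed charge, attach the (possibly infinite) collection of cells simultaneously via the cell-collection analogue of Corollary \ref{corattachstab} noted at the end of Section \ref{secencell}. In each stabilization direction $i$, I apply that analogue with the function $\rho_i \colon C \to \bR_{\geq 0}$, $\rho_i(\bc) := \rho(c_i)$: strict monotonicity of $\rho_i$ in coordinate $i$ (with other coordinates weakly smaller) and subadditivity are inherited directly from the corresponding properties of $\rho$, and the cell hypothesis yields $\rho_i(\bk) = \rho(k_i) \leq \rho(|\bk|) \leq N$. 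The required uniform stability of $A_{(N-1)}$ on $M \setminus \{r \text{ points}\}$ is the inductive hypothesis. Hence $A_{(N)} \in \cT_\infty^\rho$, uniformly across punctured manifolds. Passing to the filtered colimit, the bar construction $B_\bullet(\mathbf{M}^\theta, \bdt, -)$ and singular homology both commute with filtered colimits, and a filtered colimit of $\rho(c_i)$-equivalences is a $\rho(c_i)$-equivalence, so $A = \mathrm{colim}_N A_{(N)} \in \cT_\infty^\rho$. The second inclusion $\cC_N^\rho \subset \cS_\infty^\rho$ then follows from Proposition \ref{proptnimpliessn}.

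The main obstacle is the base case: one must invoke (or re-derive) classical homological stability for configuration spaces with labels in a connected-fiber bundle on an arbitrary $\theta$-framed connected non-compact $n$-manifold, with range as strong as $k_i/2$ in each coordinate direction and uniform in the number of removed points. The inductive step is, by contrast, a direct application of Corollary \ref{corattachstab} once the coordinatewise stability function $\rho_i$ is correctly identified, and the colimit step is formal from the compatibility of topological chiral homology with filtered colimits.
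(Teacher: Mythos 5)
Your proposal is correct and follows essentially the same route as the paper: induct over the cell attachments (filtered by degree), with the base case $0 \uplus \biguplus_{i=1}^d e^0_i$ identified with chains on configuration spaces with labels in $\theta(TM)$ and handled by Corollary \ref{corbunmulh} (the paper's appendix proves exactly the slope-$\tfrac{1}{2}$, puncture-uniform stability you invoke from the classical literature), and the inductive step handled by (the collection analogue of) Corollary \ref{corattachstab} using $\rho(k_i) \leq \rho(|\bk|) \leq N$, which is precisely the coordinatewise bookkeeping the paper leaves implicit. The only inessential quibble is that finitely many charges per degree is part of the definition of bounded cellular rather than a consequence of $j \geq \rho(|\bk|)$ (the hypotheses do not force $\rho$ to be unbounded), but nothing in your argument depends on that remark.
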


\begin{proof}We prove this by induction on $N$. For $N=0$, we can assume that the  $\cdt$-algebra $A$ is of the form \[\bigoplus_{\bk = (k_1,\ldots,k_d)} C_*\left(\cdt(k_1+\ldots+k_d)/\fS_{\bk}\right)\] We need to establish homological stability for $\int^{\bk}_M A$. 
	
Recall from Subsection \ref{subsechomologyaftercellattachment} that $C_{\bk}^\theta(M)$ is quotient by $\fS_\bk$ of the space of $\theta$-framed embeddings $\sqcup_k \bR^n \to M$ whose centers are disjoint, which is equals the configuration space of $\bk$-colored particles with labels in the fibration $\theta(TM) \to M$.
	
By an extra degeneracy argument $\int_M A$ is weakly equivalent to chains on the space 
\[\bigsqcup_{\bk = (k_1,\ldots,k_d)} \mr{Emb}^\theta(\sqcup_{k_1+\ldots+k_d} \bR^n,M)/\fS_\bk\]
The inclusion of this into $C_{\bk}^\theta(M)$ is a weak equivalence, by a similar shrinking argument as in Lemma \ref{lemdisjcomp}. It thus suffices to establish homological stability for the spaces $\bigoplus_{\bk} C_*(C_{\bk}^\theta(M))$. This is the case with function $\rho(x) = x/2$ by a generalization of Proposition B.4 of \cite{federicomartin}: this generalization says that if $E \to M$ is a fiber bundle over $M$ with path-connected fibers, then the configuration space of unordered of points in $M$ with labels in $E$ exhibits homological stability. The necessary generalizations involve replacing a single color by multiple colors and replacing fiber bundle with fibration. They prove a stability range of $\frac{k}{2}-1$ but this can be improved to a range of $\frac{k}{2}$ using the arguments appearing in the last paragraph of page 9 of \cite{kupersmillerimprov}.\footnote{See also the appendix of the arXiv-version of this paper.} We conclude that $A \in \cT^\rho_\infty$ and so $\cC^\rho_0 \subset \cT^\rho_\infty$. 

Assume the statement is true for $N-1$ and choose $A \in \cC^\rho_{N}$. Then there are cellular $\cdt$-algebras $A_0$, $A_1$, $\ldots$ such that $A_0 \in \cC^\rho_{N-1}$, $A_{i+1}$ is constructed from $A_i$ by attaching cells in degree $N$ and charge $i$, and $A_i=A$ for $i > \rho(N)$. By our induction hypothesis,  $A_0 \in \cT^\rho_\infty$. Corollary \ref{corattachstab} and Remark \ref{remmultiplecells2} imply that if $A_i \in \cT^\rho_\infty$ and $i \leq \rho(N)$, then $A_{i+1} \in \cT^\rho_\infty$. Thus, $A \in \cT^\rho_\infty$.




\end{proof}

\begin{proposition}\label{propboundimpliesstab} For any $N$ we have that $\cB^\rho_N \subset \cT^\rho_N$ and thus $\cB^\rho_N \subset \cS^\rho_N$.\end{proposition}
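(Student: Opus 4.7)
The plan is to combine Proposition~\ref{propcntinft}, Lemma~\ref{lemtchcomp}, and a short diagram chase to transfer homological stability for topological chiral homology from a bounded cellular model up through the $N$-equivalence witnessing $A \in \cB_N^\rho$. Suppose $A \in \cB_N^\rho$, so by definition there is an $N$-equivalence $f \colon B \to A$ of charged algebras with $B \in \cC^\rho_{N+1}$. Proposition~\ref{propcntinft} applies to give $B \in \cT^\rho_\infty$, so for every $\theta$-framed connected non-compact $n$-manifold $M$, every $\bk \in \bN_0^d$, and every $i$, the basic stabilization map $t_i^B \colon \int^\bk_M B \to \int^{\bk+\be_i}_M B$ is a $\rho(k_i)$-equivalence.

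Next I would apply Lemma~\ref{lemtchcomp} to $f$ to conclude that $\int^\bk_M B \to \int^\bk_M A$ is an $N$-equivalence for each $\bk$ (using that $f$ preserves charge, so the $N$-equivalence $\int_M f$ decomposes as an $N$-equivalence on each summand). Choose a representative $b_i \in B(\be_i)$ of a generator of $H_0(B(\be_i))$. Since $f$ is in particular a $0$-equivalence, $f(b_i)$ is a generator of $H_0(A(\be_i))$; using $b_i$ and $f(b_i)$ together with a single choice of auxiliary embedding $\psi \colon \bR^n \sqcup M \to M$ to define the basic stabilization maps on $B$ and $A$, functoriality of $\int_M(-)$ in the $\cdt$-algebra variable makes the square
\begin{equation*}
\xymatrix{
\int^\bk_M B \ar[r]^-{\int_M f} \ar[d]_{t_i^B} & \int^\bk_M A \ar[d]^{t_i^A} \\
\int^{\bk+\be_i}_M B \ar[r]_-{\int_M f} & \int^{\bk+\be_i}_M A
}
\end{equation*}
commute.

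Setting $m = \min(N, \rho(k_i))$, a direct diagram chase then shows $t_i^A$ is an $m$-equivalence. For $\ast < m$, the top, bottom, and left maps all induce isomorphisms on $H_\ast$, so commutativity forces $t_i^A$ to do the same. For $\ast = m$, these three maps are all at worst surjective on $H_m$; given $w \in H_m(\int^{\bk+\be_i}_M A)$, lift through the bottom horizontal to some $z \in H_m(\int^{\bk+\be_i}_M B)$, then through the left vertical to some $x \in H_m(\int^\bk_M B)$, and $t_i^A$ then sends the image of $x$ under the top horizontal to $w$. Hence $A \in \cT^\rho_N$, proving the first inclusion. The second inclusion $\cB^\rho_N \subset \cS^\rho_N$ is immediate from Proposition~\ref{proptnimpliessn}, since $\bR^n$ is itself a $\theta$-framed connected non-compact $n$-manifold.

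The main delicate point is the commutativity of the square, which requires the stabilization map of Definition~\ref{defstabmap} to be natural with respect to $\cdt$-algebra maps. This boils down to the observation that the map $\psi' \colon A(\bc_0) \otimes \int^\bc_M A \to \int^{\bc+\bc_0}_M A$ is built by applying $\int_{(-)} A$, which is symmetric monoidal in the manifold variable and functorial in the algebra variable, to the chosen embedding $\psi$; using the same $\psi$ and compatible choices $b_i$, $f(b_i)$ then makes naturality in $f$ automatic.
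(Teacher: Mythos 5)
Your proposal is correct and takes essentially the same route as the paper: both obtain $B \in \cT^\rho_\infty$ from Proposition \ref{propcntinft} and then transfer stability across the $N$-equivalence $B \to A$, finishing with Proposition \ref{proptnimpliessn}. The only difference is that you carry out the transfer step by hand (Lemma \ref{lemtchcomp} plus the commuting-square chase and the naturality of the stabilization maps), whereas the paper simply invokes Lemma \ref{lemclasscomp}, whose proof is exactly that chase.
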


\begin{proof}By definition, $A \in \cB^\rho_N$ implies the existence of a $N$-equivalence $B \to A$ such that $B \in \cC^\rho_N$. But we just saw that $\cC^\rho_N \subset \cT^\rho_\infty$ and hence $B \in \cT^\rho_N$. The conclusion $A \in \cT^\rho_N$ now follows from Lemma \ref{lemclasscomp}.\end{proof}

\subsection{Stability implies  bounded generation} Finally we prove the hardest step in the induction, that homological stability in the range $* \leq N+1$ implies  bounded generation in the range $* \leq N+1$ given that result for the range $* \leq N$. The strategy is that of relative CW approximation of maps.

\begin{proposition}\label{propstabimpliesbound} If $\cS^\rho_N = \cB^\rho_N$, then $\cS^\rho_{N+1} \subset \cB^\rho_{N+1}$.\end{proposition}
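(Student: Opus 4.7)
The plan is as follows. Since $A\in\cS^\rho_{N+1}\subset\cS^\rho_N=\cB^\rho_N$, there is an $N$-equivalence $f\colon B\to A$ with $B\in\cC^\rho_{N+1}$; by Proposition~\ref{propcntinft} and Corollary~\ref{corattachstab}, any algebra built from $B$ by attaching $\cdt$-cells compatibly with $\cC^\rho_{\bullet}$ will lie in $\cT^\rho_\infty\subset\cS^\rho_\infty$, so I can assume stability range $\rho$ in every degree for all intermediate algebras. I will attach $\cdt$-cells of degree $N+1$ in two successive phases---first to kill the kernel of $f_*$ on $H_N$, then to make $f_*$ surjective on $H_{N+1}$---producing an $(N+1)$-equivalence $f^{(2)}\colon B^{(2)}\to A$ with $B^{(2)}\in\cC^\rho_{N+2}$, witnessing $A\in\cB^\rho_{N+1}$. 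The crucial finiteness input is the threshold $R:=\lceil\rho^{-1}(N+1)\rceil$: whenever $|\bk|>R$, some coordinate $k_i$ satisfies $\rho(k_i-1)\geq N+1$, so the basic stabilization $t_i$ is an $(N+1)$-equivalence on both $B$ and $A$ in that coordinate.

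In Phase~1, set $K_\bk=\ker\!\bigl(f_*\colon H_N(B(\bk))\to H_N(A(\bk))\bigr)$. From the previous paragraph and the five-lemma, the induced map $t_i\colon K_{\bk-\be_i}\to K_\bk$ is an isomorphism whenever $k_i>R$, so every class of every $K_\bk$ is a stabilization of a class in some $K_{\bk'}$ with $|\bk'|\leq R$. For each such $\bk'$ and each element $\alpha$ of a generating set of $K_{\bk'}$, I attach an $\cdt$-cell $e^\alpha_{N+1}$ in charge $\bk'$ along a cycle representative $\tilde\alpha$. Since $f_*[\tilde\alpha]=0$, the cycle $f(\tilde\alpha)$ bounds in $A(\bk')$, and Lemma~\ref{lemattachmap} extends $f$ to $f^{(1)}\colon B^{(1)}\to A$. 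Corollary~\ref{corattachexactseq}(ii) preserves $H_{\leq N-1}$, while part~(iii) applied to the whole collection of attachments quotients $H_N(B(\bc))$ by the subgroup generated by $\{t_{\bc-\bk'}(\alpha)\}$ in every charge $\bc$---which by the reduction is exactly $K_\bc$. Hence $f^{(1)}_*$ is an isomorphism on $H_{\leq N}$.

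In Phase~2, let $C_\bk=\mr{coker}\!\bigl(f^{(1)}_*\colon H_{N+1}(B^{(1)}(\bk))\to H_{N+1}(A(\bk))\bigr)$. The hypothesis $A\in\cS^\rho_{N+1}$ gives that $t_i$ on $A$ is surjective on $H_{N+1}$ whenever $k_i>R$, so if $f^{(1)}_*$ is surjective in charge $\bk-\be_i$ then it is surjective in $\bk$. Therefore it suffices to arrange surjectivity in charges with $|\bk|\leq R$. For each such $\bk$ and each generator of $C_\bk$, choose a lift $\gamma\in H_{N+1}(A(\bk))$, attach an $\cdt$-cell $e^\gamma_{N+1}$ in charge $\bk$ with \emph{trivial} attaching map, and extend via Lemma~\ref{lemattachmap} by sending $e^\gamma_{N+1}$ to a cycle representative of $\gamma$. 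By Corollary~\ref{corattachexactseq}(iv) this preserves $H_N$ and adjoins the required generators to $H_{N+1}$, so $f^{(2)}\colon B^{(2)}\to A$ is the desired $(N+1)$-equivalence. Finally $B^{(2)}\in\cC^\rho_{N+2}$: all inherited cells already respected the $\rho$-constraint, while all newly attached cells have degree $N+1\leq N+2$ and sit in charges with $\rho(|\bk|)\leq N+1$.

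The main obstacle I foresee is keeping the two reductions compatible with a \emph{single} threshold $R$, despite the asymmetry that $A\in\cS^\rho_{N+1}$ delivers only surjectivity (not isomorphism) on $H_{N+1}$; this is why the phases must be separated and why the cells in Phase~2 are chosen to have trivial attaching map (so $H_N$ is not disturbed). The hypotheses $\rho(x+y)\leq\rho(x)+\rho(y)$, $\rho(x)\leq x/2$, and $\rho^{-1}(\N_0)\subset\N_0$ are what guarantee that $R$ is a finite integer and that the attachment charges $|\bk|\leq R$ automatically satisfy $\rho(|\bk|)\leq N+1$, making $B^{(2)}$ admissible for $\cC^\rho_{N+2}$.
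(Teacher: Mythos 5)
Your argument is correct and is essentially the paper's proof: you attach degree-$(N+1)$ $\cdt$-cells only in charges of total charge at most $\rho^{-1}(N+1)$, first along representatives of kernel classes on $H_N$ (extending the map via Lemma \ref{lemattachmap}, since those classes bound in $A$) and then with trivial attaching maps hitting cokernel generators on $H_{N+1}$, tracking the effect with Corollary \ref{corattachexactseq} and invoking homological stability to handle all larger charges. The only real difference is bookkeeping: the paper attaches cells by induction on total charge, recomputing kernels and cokernels of the current map at each stage and deducing the $(N+1)$-equivalence in large charges at the very end from stability of both $A$ and the cellular model (via Proposition \ref{propcntinft}), whereas you attach everything in one shot, using stability of $B$ and $A$ together with the five lemma up front to identify each kernel $K_\bc$ with a stabilized kernel from the bounded range of charges, and then propagate surjectivity on $H_{N+1}$ using stability of $A$ alone --- an equivalent deployment of the same inputs.
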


\begin{proof}Let $A \in \cS^\rho_{N+1}$, then our hypothesis implies $A \in \cB^\rho_N$ and there exists an $N$-equivalence $f\colon B \to A$ with $B \in \cC^\rho_{N}$. Let $R_{N+1}= \rho^{-1}(N+1)$, which is a single non-negative integer since $\rho\colon \bR_{\geq 0} \to \bR_{\geq 0}$ is strictly increasing and satisfies $\rho^{-1}(\bN_0) \subset \bN_0$. We will extend the map $f\colon B \to A$ by attaching to $B$ a collection of $\cdt$-cells of degree $N+1$ and charge $\bk$ satisfying $|\bk| \leq R_{N+1}$. The end result will be an $\cdt$-algebra $B'$ and an $(N+1)$-equivalence $f'\colon B' \to A$ of $\cdt$-algebras extending $f\colon B \to A$. 

We will build $f'\colon B' \to A$ by induction on maximal charge $c = |\bk|$. Let $B_{-1}^\mr{surj}\coloneqq B$ and $f_{-1}^\mr{surj} = f$. For $0 \leq c \leq R_{N+1}$, we will define  $\cdt$-algebras with maps of $\cdt$-algebras extending $f\colon B \to A$:
\[f^\mr{surj}_c\colon B_c^\mr{surj} \to A\qquad \text{and} \qquad  f^\mr{iso}_c\colon B_c^\mr{iso} \to A\]
with the following properties
\begin{itemize}
\item Both $B_c^\mr{surj}$ and $B_c^\mr{iso}$ will be in $\cC^\rho_{N+1}$.
\item  The algebra and map $f_c^\mr{surj}\colon B_c^\mr{surj} \to A$ will extend $f_{c}^\mr{iso}\colon B_{c}^\mr{iso} \to A$ and have the property that $B_c^\mr{surj}(\bk) \m A(\bk)$ is an $(N+1)$-equivalence for $|\bk| \leq c$ and an $N$-equivalence for $|\bk| > c$. 
\item The algebra and map $f_c^\mr{iso}\colon B_c^\mr{iso} \to A$ will extend $f_{c-1}^\mr{surj}\colon B_{c-1}^\mr{surj} \to A$ and have the property that $B_c^\mr{iso}(\bk) \m A(\bk)$ is an $N$-equivalence for all $\bk$ and $H_N(B_c^\mr{iso}(\bk)) \m H_N(A(\bk))$ is an isomorphism for $|\bk| \leq c$.
\end{itemize} 

The idea is that the algebra and map $f_c^\mr{surj}\colon B_c^\mr{surj} \to A$ correct the problem that $f_{c}^\mr{iso}\colon B_{c}^\mr{iso} \m A$ is not a surjection on $H_{N+1}$ and maximal charge $|\bk| = c$, while the algebra and map $f_c^\mr{iso}\colon B_c^\mr{iso} \to A$ correct the problem that the map $f_{c-1}^\mr{surj}\colon B^\mr{surj}_{c-1} \m A$ is not an isomorphism on $H_N$ and maximal charge $|\bk| = c$. Eventually $f'\colon B' \to A$ will be $f^\mr{surj}_{R_{N+1}}\colon B_{R_{N+1}}^\mr{surj} \to A$.

It suffices to assume we have constructed $f^\mr{surj}_{c-1}\colon B_{c-1}^\mr{surj} \to A$ for $c \leq R_{N+1}$, and construct first $f_c^\mr{iso}\colon B_c^\mr{iso} \to A$ and then $f_c^\mr{surj}\colon B_c^\mr{surj} \to A$.

\begin{description}
\item[Step 1 -- isomorphism]

Assume we have defined $f_{c-1}^{surj}\colon B^\mr{surj}_{c-1} \m A$  with the desired properties listed above. Consider the kernels of $H_{N}(B^\mr{surj}_{c-1}(\bk)) \to H_{N}(A(\bk))$ for each $\bk$ satisfying $|\bk| = c$. Take cycles $\{b_{p}\}_{p \in P_\bk}$ with $b_{p} \in B^\mr{surj}_{c-1}(\bk)$ representing a generating set of these kernels. Because the homology class of $b_{p}$ is in the kernel of the map to $H_*(A(\bk))$, $f^\mr{surj}_{c-1}(b_p)$ is a boundary $d(x_p)$.

For each $p \in P_k$ we attach an $\cdt$-cell $(e^{N+1})_{p}$ to $B^\mr{surj}_{c-1}$ in charge $\bk$ along $b_p$. We call the resulting algebra $B_c^\mr{iso}$ and using Lemma \ref{lemattachmap} extend the map $B^\mr{surj}_{c-1} \m A$ to a map 
\[f^\mr{iso}_c\colon B^\mr{iso}_c \coloneq B^\mr{surj}_{c-1} \uplus \biguplus_{p \in P} (e^{N+1})_{p} \to A\]
by sending $(e^{N+1})_{p}$ to $x_p$. 

Note that $B^\mr{iso}_c \in \cC^\rho_{N+1}$ and that $B^\mr{iso}_c \to A$ induces an isomorphism on $H_*$ for $* \leq N-1$ by Property (ii) of Corollary \ref{corattachexactseq} and is an $(N+1)$-equivalence for charges $\bk$ with $|\bk| \leq c-1$ by Property (i) of Corollary \ref{corattachexactseq}. 

We also claim it induces an isomorphism on $H_{N}$ for charges $\bk$ with $|\bk| = c$. This will be a consequence of Property (iii) of Corollary \ref{corattachexactseq}, which says there is an exact sequence 
\begin{align*}H_{N+1}(B^\mr{surj}_{c-1}(\bk)) &\to H_{N+1}(B^\mr{iso}_c(\bk)) \to \bigoplus_{p \in P_{\bk'}} \bZ \to H_{N}(B^\mr{surj}_{c-1}(\bk)) \\
&\to H_{N}(B^\mr{iso}_c(\bk)) \to 0\end{align*}
where $\bk'$ ranges over all $\bk'$ such that $\bk' \leq \bk$ and the generator of the $\bZ$-summand corresponding to $p \in P_\bk$ goes to $t_{\bk-\bk'}(b_p)$. We conclude that $H_{N}(B^\mr{iso}_c(\bk)) = H_{N}(B^\mr{surj}_{c-1}(c))/\mr{im}(\bigoplus_{p \in P_{\bk'}}\bZ)$ and note that this image contains the kernel $\ker(H_{N}(B^\mr{surj}_{c-1}(c)) \to H_{N}(A(c)))$. Furthermore, by construction the surjective map $H_{N}(B^\mr{surj}_{c-1}(\bk)) \to H_{N}(A(\bk))$ factors over $H_{N}(B^\mr{iso}_c(\bk))$. Hence the map $H_{N}(B^\mr{iso}_c(\bk)) \to H_{N}(A(\bk))$ is a surjection with trivial kernel and hence an isomorphism.


\item[Step 2 -- surjection] Assume we have defined $f^\mr{iso}_c \colon B^\mr{iso}_{c} \to A$ with the desired properties. Take cycles $\{a_q\}_{q \in Q_\bk}$ with $a_q \in A(\bk)$ representing a generating set of $H_{N+1}(A(\bk))$ for each $\bk$ satisfying $|\bk| = c$. 

For each $q \in Q_k$ we attach an $\cdt$-cell $(e^{N+1})_q$  to $B^\mr{iso}_{c}$ in charge $\bk$ with trivial attaching map (that is, setting $d((e^{N+1})_q) = 0$). We call the resulting algebra $B^\mr{surj}_{c}$ and using Lemma \ref{lemattachmap} extend the map $B^\mr{iso}_{c} \to A$ to a map 
\[f^\mr{surj}_c\colon B^\mr{surj}_{c} \coloneqq B^\mr{iso}_{c} \uplus \biguplus_{q \in Q} (e^{N+1})_q \to A\]
by sending $(e^{N+1})_q$ to $a_q$.  This is possible since $d(a_q) = 0$ because $a_q$ is a cycle. 

Note that $B^\mr{surj}_{c} \in \cC^\rho_{N+1}$ and that $B^\mr{surj}_{c} \to A$ induces an isomorphism on $H_*$ for $* \leq N$ by properties (ii) and (iii) of Corollary \ref{corattachexactseq} and a surjection on $H_{N+1}$ for charges $\bk$ with $|\bk| \leq c-1$ by Property (i) of Corollary \ref{corattachexactseq}. 

We also claim it induces a surjection on $H_{N+1}$ for charges $\bk$ with $|\bk| = c$. This is a consequence of Property (iv) of Corollary \ref{corattachexactseq}, since in this case we get an exact sequence
\[H_{N+1}(B^\mr{iso}_{c}(\bk)) \to H_{N+1}(B^\mr{surj}_{c}(\bk)) \to \bigoplus_{q \in Q_{\bk'}} \bZ \to 0\]
where $\bk'$ ranges over all $\bk'$ such that $\bk' \leq \bk$. The first map is canonically split by sending the $(e^{N+1})_q$ to $0$, which implies we have an isomorphism $H_{N+1}(B^\mr{surj}_{c}(\bk)) \cong H_{N+1}(B^\mr{iso}_{c}(\bk)) \oplus \bigoplus_{q \in Q_{\bk'}} \bZ$. By construction we have that the map $H_{N+1}(B^\mr{surj}_{c}(\bk)) \to H_{N+1}(A(\bk))$ extends the map $H_{N+1}(B^\mr{iso}_{c}(\bk)) \to H_{N+1}(A(\bk))$ by sending the generator of the $\bZ$-summand corresponding to $q \in Q_\bk$ to $a_q \in H_{N+1}(A(\bk))$. This makes the map surjective since we hit all generators of the cokernel.
\end{description}

Let $f'\colon B' \to A$ be $f^\mr{surj}_{R_{N+1}}\colon B_{R_{N+1}}^\mr{surj} \to A$, thus obtaining an $\cdt$-algebra $B' \in \cC^\rho_{N+1}$ with a map $B' \m A$ such that the map is an $(N+1)$-equivalence for charges $\br$ satisfying $|\br| \leq R_{N+1}$. We will prove it is an $(N+1)$-equivalence for all charges. 

By definition, $A$ has homological stability range $\rho$ and by Proposition \ref{propcntinft}, so does $B'$. Let $\mathbf{r}$ satisfy $|\br| = R_{N+1}$, $\mathbf{j} \in \bN_0^d$ and consider the following commutative diagram:
\[\xymatrix{B'(\br) \ar[r]^{t_\bj} \ar[d] & B'(\br+\bj) \ar[d] \\
A(\br) \ar[r]^{t_\bj}& A(\br+\bj)}\] The two stabilization maps (the horizontal maps) are $(N+1)$-equivalences by homological stability. The leftmost vertical map is an $(N+1)$-equivalence by the argument above. Therefore, the rightmost vertical map is an  $(N+1)$-equivalence as well. This shows that $B' \m A$ is an $(N+1)$-equivalence and so $A \in \cB^\rho_{N+1}$. 
\end{proof}

\begin{remark}\label{remcelldecompexist} The technique used in the proof of Proposition \ref{propstabimpliesbound} can be used to show that any connected augmented charged algebra in chain complexes can be obtained by iterated $\cdt$-cell attachments up to weak equivalence.\end{remark}

\section{Extending the main theorem to compact manifolds}\label{secscanning} In this section we prove that a restatement of the main theorem extends to connected compact manifolds. In this section we restrict to $\cdtt$-algebras in $\cat{Top}$ or $\cat{sSet}$, although several results should have versions over $\cat{Ch}$ and other tangential structures. This restriction allows us to quote the results of \cite{Mi2}.

For us, the $n$-fold delooping $B^n X$ will be defined to be $B(\Sigma^n_+,\bdtt,X)$, with $\Sigma^n_+$ as defined below. There is a natural zig-zag of weak equivalences between this model of the $n$-fold delooping and that of May \cite{M}.

\begin{definition}\label{defsuspension} Let $\RS$ be the right functor over $\bdt$ in $\cat{Top}$ given by
	\[X \mapsto \Sigma^n X_+ \] where we consider $S^n$ as the one-point compactification of $\bR^n$. The right module structure $a\colon \Sigma^n_+ \bdt X \m \Sigma^n_+ X$ is defined as follows. A point of $\Sigma^n_+ \bdt X$ is either the base point or determined by a triple $(r,(e,\psi),x)$ of $r \in \R^n$, $(e,\psi) \in \mr{Emb}^\theta(\sqcup_k \bR^n,\bR^n)$ and $x \in X$. Here $e$ is an embedding and $\phi$ is a path of bundle maps. If $r$ is not in the image of $e$, we define $a(r,(e,\psi),x) \in \Sigma^n_+ X$ to be the base point. Otherwise, suppose that $r$ is contained in the image under $e$ of the $i$th component of $\sqcup_k \bR^n$, let $e_i\colon\R^n \m \R^n$ be the restriction of $e$ to the component whose image contains $r$ and set
	\[a(r,(e,\psi),x) \coloneqq (e_i^{-1}(r),x) \in \R^n \times X.\]
\end{definition}






Since $\Sigma^n_+$ is a right $\bdtt$-functor, so is $F \circ \Sigma^n_+$ for any functor $F$. Let $\mr{Map}^c(M,\Sigma^n_+ -)$ denote the right $\bdtt$-functor whose value on a space $Y$ is $\mr{Map}^c(M,\Sigma^n_+ Y)$, the space of compactly supported maps. See \cite{Mi2} for a description of a natural transformation $\mathbf{M}^\mr{pt} \m \mr{Map}^c(M,\Sigma^n_+ -)$. We now recall the definition of the scanning map in this context.

\begin{definition}
For a framed manifold $M$ and an $\cdtt$-algebra $X$, the \emph{scanning map} 
\[s \colon \int_M X \m \mr{Map}^c(M,B^n A)\]
is the composition of the map $B(\mathbf{M}^\mr{pt},\bdtt,X) \m B(\mr{Map}^c(M,\Sigma^n_+ -),\bdtt,X)$ induced by the map of right modules, with the natural map $B(\mr{Map}^c(M,\Sigma^n_+ -),\bdtt,X) \m \mr{Map}^c(M,B(\Sigma^n_+-,\bdtt,X))$.
\end{definition}

When $X$ is grouplike, the scanning map is a weak equivalence. This is known as \emph{non-abelian Poincar\'e duality} \cite{lurieha} \cite{Sa} \cite{ayalafranciskoszul}. In particular, there is a weak-equivalence $\int_M \Omega^n B^n X \m \mr{Map}^c(M,B^n X)$ for any $\cdtt$-algebra $X$. For connected $M$, we have that $\pi_0(\mr{Map}^c(M,B^n X))$ is the Grothendieck group of $\pi_0(X)$. Given $\bk$ in the Grothendieck group of $\pi_0(X)$, we let $\mr{Map}^c_{\bk}(M,B^n X)$ denote the corresponding connected component.

For $M=\R^n$, the scanning map gives a map $g \colon X \m \Omega^n B^n X$ which is the group-completion map. We now assume $C=\N^d_0$ for simplicity. When $X$ has homological stability, the map $g \colon X \m \Omega^n B^n X$ is a homology equivalence in the same range by the group-completion theorem \cite{MSe}. The analogous result for non-compact path-connected framed manifolds was proven in \cite{Mi2}.

\begin{theorem}[Miller] \label{propstablehomology} Let $M$ be a non-compact path-connected framed manifold of dimension $n \geq 2$. Let $X$ be a $\cdtt$-algebra $X$ with homological stability range $\rho$. The scanning map $s\colon \int^\bk_M X \m \mr{Map}_{\mathbf{k}}^c(M,B^n X)$ is a $\rho(\min\{k_i\})$-equivalence. \end{theorem}

We will extend this result to compact manifolds. In the compact case, the spaces $\mr{Map}_{\mathbf{k}}^c(M,B^n X)$ need not have the same homotopy type as $\bk$ varies and thus the spaces $\int^\bk_M X$ need not exhibit homological stability even if $X$ has homological stability. Naturality of topological chiral homology gives us a map $\smallint g$, which fits into the following diagram:
\[\xymatrix{\int^\bk_M X \ar[rr]^-s \ar[rd]_-{\smallint g} & & \mr{Map}_{\mathbf{k}}^c(M,B^n X)\\
	& \int^\bk_M \Omega^n B^n X \ar[ru]_-\simeq & }\]
For technical reasons, it is easier to study the map $\smallint g$ than the scanning map $s$. 

\begin{theorem}\label{thmcompact}Let $n \geq 2$. For any framed path-connected manifold $M$ and an $\cdtt$-algebra $X$ that has homological stability with range $\rho$, the map $s\colon \int^\bk_M X \to \mr{Map}_{\mathbf{k}}^c(M,B^n X)$ is a $\rho(\min\{k_i\})$-equivalence.\end{theorem}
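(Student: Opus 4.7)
The plan is to reduce the compact case to the non-compact case of \cite{Mi2} by a resolution-by-punctures argument modeled on Proposition 9.4 of \cite{RW} and Theorem 5.9 of \cite{kupersmillertran}. Write $\Gamma^c_\bk(M) := \Gamma^c_\bk(M,B^{TM}X)$ for brevity, and recall that for $n\geq 2$ the complement of any finite set in a connected manifold $M$ remains connected.

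First, I would build compatible augmented semi-simplicial resolutions on both sides of scanning. On the TCH side, $Y_\bullet \to \int^\bk_M X$ has $Y_p$ the fiberwise topological chiral homology of the bundle over the ordered configuration space $F_{p+1}(M)$ whose fiber over $(m_0,\ldots,m_p)$ is the non-compact $\theta$-framed manifold $M\setminus\{m_0,\ldots,m_p\}$; face maps $d_i$ ``forget the $i$-th puncture'' via the open inclusion $M\setminus\{m_0,\ldots,m_p\}\hookrightarrow M\setminus\{m_0,\ldots,\widehat{m_i},\ldots,m_p\}$, and the augmentation uses the inclusion $M\setminus\{m_0\}\hookrightarrow M$. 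On the section side, $Z_\bullet \to \Gamma^c_\bk(M)$ has $Z_p$ the corresponding space of fiberwise compactly supported sections, with ``face maps'' extending sections by the basepoint across reintroduced punctures (which is an equality of spaces once one unwinds the definition of compact support). Scanning is natural enough to assemble into a map of augmented semi-simplicial spaces, producing a commutative square
\[\xymatrix{|Y_\bullet| \ar[r] \ar[d] & \int^\bk_M X \ar[d]^{s} \\ |Z_\bullet| \ar[r] & \Gamma^c_\bk(M).}\]

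Second, the vertical arrow on the left is easy to control. For each $p\geq 0$ the fiber of $Y_p\to F_{p+1}(M)$ is $\int^\bk_{M\setminus\{p+1\text{ pts}\}} X$, the fiber of $Z_p\to F_{p+1}(M)$ is $\Gamma^c_\bk(M\setminus\{p+1\text{ pts}\})$, and the fiberwise map is the scanning map for the \emph{non-compact} connected manifold $M\setminus\{p+1\text{ pts}\}$. By \cite{Mi2} this is a $\rho(\min\{k_i\})$-equivalence, and a Serre spectral sequence comparison of the type used in Lemma \ref{lemssserrefib} (applicable because $F_{p+1}(M)$ is path-connected for $n\geq 2$) upgrades this to a $\rho(\min\{k_i\})$-equivalence $Y_p\to Z_p$ for every $p$. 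Passing to geometric realizations then gives the same conclusion for $|Y_\bullet|\to|Z_\bullet|$.

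Third---and this is where the real work lies---I would show that the two horizontal augmentations are themselves $\rho(\min\{k_i\})$-equivalences, which together with the second step forces $s$ to be one by a two-out-of-three in the square. Conceptually, $Y_\bullet$ is the \v{C}ech-type resolution associated to the cover of $\int^\bk_M X$ by the ``open sets'' $U_m$ of TCH elements whose particle support avoids $m\in M$. Using the bar-construction description of $\int^\bk_M X$, every simplex carries a well-defined particle support of some size $l$, and the homotopy fiber of the augmentation over the stratum of support size $l$ is the realization of the semi-simplicial set of tuples of distinct points in $M$ disjoint from the $l$ support points. A general-position argument valid for $n\geq 2$ shows this fiber has connectivity growing linearly in $l$, and one then converts the connectivity bound to a charge bound by noting that for TCH elements in charge $\bk$ the minimal particle support size is controlled by $\min\{k_i\}$, so any simplex in low enough homological degree can be represented with support as small as needed for the Čech comparison to work; a parallel argument applies on the section side. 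The main obstacle will be carrying out this ``shift-and-push'' bookkeeping (as in the proof of Proposition~7.4 of \cite{RW}) simultaneously on both the TCH and section sides, so that the resulting connectivity estimates align exactly with $\rho(\min\{k_i\})$ rather than some worse function.
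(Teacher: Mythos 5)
Your overall strategy (resolve by punctures, apply \cite{Mi2} levelwise to the punctured, hence non-compact, manifolds, use $n \geq 2$ to keep them connected) is the same as the paper's, but the execution of your third step has two genuine gaps. First, the ``particle support'' of an element of $\int^\bk_M X$ in the bar-construction model is a union of \emph{embedded open disks}, not a finite set of points; for compact $M$ finitely many disks can cover $M$, so the complement of the support may be empty and the fiber of your augmentation over such a simplex is empty, not highly connected. The paper deals with this by first replacing $\mr{Emb}^\theta(\sqcup_k\bR^n,M)$ with the weakly equivalent subspace $\mr{Emb}^\theta_\infty$ of embeddings whose image-closure has infinite complement, and by putting the \emph{discrete} topology on the puncture points; then the point-inverses of the augmentation are realizations of the complex of injective words on an infinite set, hence contractible, and a microfibration argument makes the augmentation a genuine weak equivalence for every charge. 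In particular no connectivity-versus-charge bookkeeping is needed: the range $\rho(\min\{k_i\})$ enters only through the levelwise application of \cite{Mi2}. Your proposed bookkeeping is moreover based on a false premise: there is no lower bound on support size in terms of $\min\{k_i\}$, since a single embedded disk labeled by an element of $X(\bk)$ already has charge $\bk$, so the ``shift-and-push'' comparison cannot produce the stated range. With your topologized ordered configuration spaces $F_{p+1}(M)$ the contractibility of the augmentation fibers is also not the injective-words argument and would need a separate justification.

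Second, on the section side you resolve $\Gamma^c_\bk(M,B^{TM}X)$ directly by sections that are basepoint-valued near tuples of punctures, asserting the face maps are essentially equalities; but a section of $B^{TM}X$ need not take the basepoint value anywhere, so it is not clear your augmentation $|Z_\bullet| \to \Gamma^c_\bk(M)$ is even $\pi_0$-surjective without invoking connectivity of $B^n X$ and obstruction theory, and no argument for its high connectivity is given. The paper sidesteps this entirely: it factors $s$ through $\smallint g\colon \int^\bk_M X \to \int^\bk_M \Omega^n B^n X$ using non-abelian Poincar\'e duality $\int^\bk_M \Omega^n B^n X \simeq \Gamma^c_\bk(M,B^{TM}X)$, and then applies the \emph{same} puncture resolution $I_{\bullet,\bullet}(M,-)$ to both $X$ and $\Omega^n B^n X$, so the contractibility argument is used verbatim on both sides and the relative realization spectral sequence, whose $E^1$-page vanishes for $q \leq \rho(\min\{k_i\})$ by \cite{Mi2}, finishes the proof. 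If you incorporate these two fixes (the $\mr{Emb}_\infty$/discrete-points model of the resolution, and replacing the section space by $\int_M \Omega^n B^n X$ before resolving) your outline becomes the paper's proof.
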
 

\begin{proof}The proof uses a resolution by punctures argument introduced in \cite{RW}.

First of all we replace $s$ by $\smallint g$. Let $\mr{Emb}^\mr{pt}_\infty(\sqcup_k \bR^n,M)$ be the subspace of $\mr{Emb}^\mr{pt}(\sqcup_k \bR^n,M)$ consisting of embeddings and paths such that the closure of the image is an embedded closed disk and its complement has infinite cardinality. We use it to modify  Definition \ref{defmtheta} and Definition \ref{deftch} by using the right $\bdtt$-functor $\mathbf M^\mr{pt}_\infty$ given by $X \mapsto \bigsqcup_{k \geq 0} \mr{Emb}^\mr{pt}_{\infty}(\bR^n,M) \odot_{\fS_k} X^k$ and defining
“\[\int_{M,\infty} X = B (\mathbf{M}^\mr{pt}_\infty,\bdtt,X) \] The inclusion $\mr{Emb}^\mr{pt}_\infty(\sqcup_k \bR^n,M) \hookrightarrow \mr{Emb}^\mr{pt}(\sqcup_k \bR^n,M)$ is a weak equivalence by a similar argument to \ref{lemdisjcomp}, so the inclusion $\int_{M,\infty} X \to \int_M X$ is a weak equivalence as well by Lemmas \ref{lemweakequivfunctor} and \ref{lemgeomrel}.

We define a semisimplicial space with $q$-simplices given by pairs $(\phi,(m_0,\ldots,m_q))$ of an ordered $(q+1)$-tuple  of points $m_i$ in $M$ and a framed embedding $\phi\colon \sqcup_k \bR^n \to M \setminus \{m_0,\ldots,m_q\}$. We put the usual topology on the embeddings and paths of bundle maps, but put the \emph{discrete} topology on the points in $M$. The face map $d_i$ forgets the point $m_i$. This semisimplicial space has an augmentation to $\mr{Emb}^\mr{pt}_\infty(\sqcup_k \bR^n,M)$ by forgetting all the $m_i$. It defines semisimplicial right $\bdtt$-functor $\mathbf{M}^\mr{pt}_{\bullet,\infty}$ and thus an augmented semisimplicial simplicial space $I_{\bullet,\bullet}(M,X)$ given by 
\[[p,q] \mapsto B_p(\mathbf{M}^\mr{pt},\bdtt,X) = \mathbf{M}^\mr{pt}_{q,\infty}(\bdtt)^p X\]
This satisfies
\[\left\vert [p] \mapsto I_{p,q}(M,X)\right\vert \simeq \begin{cases} \int_{M,\infty} X & \text{if $q = -1$} \\
\bigsqcup_{(m_0,\ldots,m_q)} \int_{M \setminus \{m_0,\ldots,m_q\},\infty} X & \text{if $q\geq 0$} \end{cases}\]

Realizing in the other direction, we claim that the augmentation 
\[\epsilon_p\colon \left\vert [q] \mapsto \mathbf{M}^\mr{pt}_{q,\infty}(\bdtt)^p X \right\vert \to \mathbf{M}^\mr{pt}_\infty (\bdtt)^p X\]
is a weak equivalenc. To prove this, we start by nothing that because the complement of the closure of the images of the outermost embeddings in $\int_{M,\infty} X$ has infinitely many points (this was the point of using $\mr{Emb}_\infty$), that the point inverses of $\epsilon_p$ are contractible by the argument on Page 17 of \cite{RW} or Lemma 5.7 of \cite{kupersmillertran}. Since every point has a neighborhood not intersecting the image of a disk (this was the point of using closures in our definition of $\mr{Emb}^\mr{pt}_\infty$), a standard argument like Proposition 5.8 of \cite{kupersmillertran} proves that $\epsilon_p$ is a microfibration as in Section 2 of \cite{weissclassify}. By Lemma 2.2 of \cite{weissclassify}, the map $\epsilon_p$ is a Serre fibration and hence a weak equivalence. Using Lemma \ref{lemgeomrel} conclude that $|I_{\bullet,\bullet}(M,X)| \to \int_{M,\infty} X$ is a weak equivalence. 

Our construction gives a map 
\[(\smallint g)_{\bullet,\bullet}\colon I_{\bullet,\bullet}(M,X) \to I_{\bullet,\bullet}(M,\Omega^n B^n X)\]
After realizing in the $p$-direction, this is given by $\smallint g$ on the augmentation and each of the simplicial levels. Restricting to charge $\bk$ and using that the augmentation maps are weak equivalences, we get a relative geometric realization spectral sequence as in Lemma \ref{lemgeomrealss} for the $p$-direction:
\begin{align*}E^1_{pq} = \bigoplus_{\{m_0,\ldots,m_p\}}H_q&\left(\int^\bk_{M \setminus \{m_0,\ldots,m_p\}} \Omega^n B^n X,\int^\bk_{M \setminus \{m_0,\ldots,m_p\}} X\right) \\
&\Rightarrow H_{p+q}\left(\int^\bk_M \Omega^n B^n X,\int^\bk_M X\right)\end{align*}

\noindent The result of \cite{Mi2} says that for each of the summands, the map $(\smallint g)_p$ is a $\rho(\min\{k_i\})$-equivalence on the $\bk$-component (this uses $n \geq 2$ to guarantee that $M$ remains connected after removing points). This means that the $E^1$-page vanishes for $q \leq \rho(\min\{k_i\})$ and hence so does the $E^\infty$-page. This proves the result.
\end{proof}

Thus the local-to-global principle can be rephrased as saying that the group-completion map induces a homology equivalence in a range tending to infinity if and only if the scanning map for topological chiral homology does.

\section{Applications of the main theorem} \label{secapplications}

In this section we discuss examples of applications of Theorem \ref{thmmain}. In the first subsection, we discuss applications of the implication (i) $\Rightarrow$ (ii) and in the second subsection we discuss applications of the implication (iii) $\Rightarrow$ (ii). 

We will consider many examples of labeled configuration spaces in this section. All of these examples are homeomorphic to the configuration spaces $C(M;A)$ considered in \cite{Sa} for some choice of partial algebra $A$. The techniques of Section 3.3 of \cite{Mi3} show that $\int_M C(\R^n;A) \simeq C(M;A)$ and so we can use topological chiral homology to study these spaces.

\subsection{Applications of the local-to-global homological stability principle} \label{subseclocalglobal}

In this subsection, we apply the local-to-global homological stability principle to give new proofs of homological stability for bounded symmetric powers and the divisor spaces which appear in Segal's work on spaces of rational functions \cite{Se}. Recall that we already discussed symmetric powers in Example \ref{exampsym}.

\subsubsection{Bounded symmetric powers and divisor spaces associated to projective spaces}\label{boundedsymmetricpowerssubsectionlocaltoglobal} 
We start by defining bounded symmetric powers. 

\begin{definition}
Let $\mr{Sym}_k^{\leq d}(M)$ denote the subspace of $\mr{Sym}_k(M)$ where at most $d$ particles occupy the same point in $M$ and define
\[\mr{Sym}^{\leq d}(M)  \coloneqq \bigsqcup_k \mr{Sym}_k^{\leq d}(M).\] 
\end{definition}

The spaces $\mr{Sym}^{\leq d}(\R^2)$ can be identified with the space of monic polynomials with roots of multiplicity of order at most $d$. They have applications to the study of $J$-holomorphic curves \cite{Mi3} and were also studied in \cite{VW} in relation to a motivic analogue of homological stability. Since there is a canonical map of operads $\cdt \to \cE^\mr{O}_n$, the functoriality of $\mr{Sym}^{\leq d}$ with respect to embeddings makes $\mr{Sym}^{\leq d}(\bR^n)$ into an $\cdt$-algebra for all $\theta$, but we will only use the $\cdtf$-algebra structure. Note that $\pi_0(\mr{Sym}^{\leq d}(\bR^n)) \cong \N_0$ and let $t$ denote the stabilization map that increases charge by one.

Before describing corollaries of the local-to-global homological stability principle for bounded symmetric powers, we recall the definition of certain divisor spaces.  These divisor spaces appear in the study of holomorphic maps from a curve to complex projective space $\bC P^d$. Using the local-to-global homological stability principle and the results of \cite{CCMM1}, we will give a new proof of homological stability for divisor spaces. One can consider $(\mr{Sym}(M))^{d+1}$  as a configuration space of particles labeled by particles with $d+1$ different colors.

\begin{definition}Let $\mr{Div}^{ d}(M)$ denote the subspace of $(\mr{Sym}(M))^{d+1}$ where any given point in $M$ is occupied by particles of at most $d$ different colors.
\end{definition}

The functoriality of $\mr{Div}^d$ with respect to embeddings makes $\mr{Div}^d(\R^n)$ into an $\cdt$-algebra. Note that $\pi_0(\mr{Div}^d(\R^n))$ is $\N_0^{d+1}$. For $\bk = (k_1,\ldots,k_{d+1})$, we define $\mr{Div}^{d}_{\bk}(M)$ to be the subspace of $\mr{Div}^{d}(M)$ where there are exactly $k_i$ particles of color $i$. 

The space $\mr{Div}^d(\R^2)$ can be thought of as the space of $d+1$ monic polynomials with no common root. Let $\delta(k)$ denote $(k,k,\ldots,k) \in \bN_0^{d+1}$. In \cite{Se}, Segal noted that $\mr{Div}_{\delta(k)}^{d}(\bR^2)$ is homeomorphic to $\mr{Hol}^*_k(\C P^1, \C P^d)$, the space of based holomorphic degree $k$ maps from $\C P^1$ to $\C P^d$. More generally, there is a map \[\pi\colon\mr{Div}_{\delta(k)}^{d}(\Sigma_g) \m (J_g)^d\]
with $\Sigma_g$ a curve of genus $g$ and $J_g$ its Jacobian. The fiber of $\pi$ at $0$ is homeomorphic to $\mr{Hol}^*_k(\Sigma_g, \C P^d)$ and the map $\pi$ is a homology fibration in a range \cite{Se}. There are $d$ different basic stabilization maps, with \[t_i\colon\mr{Div}_{\bk}^{d}(\R^2) \m \mr{Div}_{\bk+\be_i}^{d}(\R^2)\]
the basic stabilization map that adds a point of the $i$th color. Note that on Page 46 of \cite{Se}, it is shown that if $k_i \geq k_j$ for all $j$ then $t_i$ is a homotopy equivalence. Thus, the relevant map to consider is $t=t_1 \circ t_2 \circ \ldots \circ t_d\colon \mr{Div}_{\delta(k)}^{d}(\R^2) \m \mr{Div}_{\delta(k+1)}^{d}(\R^2)$; that is, we can restrict to the diagonal. 

These divisor spaces and bounded symmetric powers are closely related when $M=\R^n$.  In \cite{V1}, Vassiliev proved the following.

\begin{theorem}[Vassiliev]
There is a homology equivalence compatible with the stabilization map between $\mr{Div}_{\delta(k)}^{d}(\R^2)$ and $\mr{Sym}^{\leq d}_{k(d+1)}(\R^2)$. Additionally, the stabilization map 
\[t\colon\mr{Sym}^{\leq d}_{k}(\R^2) \m  \mr{Sym}^{\leq d}_{k+1}(\R^2)\]
is a homotopy equivalence unless $k+1$ is divisible by $d+1$. 
\end{theorem}

These results say that homological stability for bounded symmetric powers of $\R^2$ is equivalent to stability for divisors in $\R^2$. In \cite{CCMM1}, Cohen, Cohen, Mann and Milgram computed the cohomology of the spaces $\mr{Div}_{\delta(k)}^{d}(\R^2)$ with field coefficients and also described the effect of the stabilization map. See also \cite{KSstring} for this calculation. From these explicit calculations, one sees that $t\colon\mr{Div}_{\delta(k)}^{d}(\R^2) \m \mr{Div}_{\delta(k+1)}^{d}(\R^2)$ induces a homology isomorphism in the range $* \leq k(2d-1)$. This implies that $t\colon\mr{Sym}^{\leq d}_{k}(\R^2) \m \mr{Sym}^{\leq d}_{k+1}(\R^2)$ induces a homology isomorphism in the range $* \leq \frac{k(2d-1)}{d+1}$. Applying the local-to-global homological stability principle we get the following corollaries.

\begin{corollary}\label{corboundedsymm}
For $\Sigma$ an orientable connected non-compact surface, \[t\colon\mr{Sym}^{\leq d}_{k}(\Sigma) \m  \mr{Sym}^{\leq d}_{k+1}(\Sigma)\]
induces an isomorphism on $H_*$  for $* \leq k/2$.\end{corollary}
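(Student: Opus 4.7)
The plan is to obtain Corollary \ref{corboundedsymm} as a direct application of the quantitative local-to-global principle of Theorem \ref{thmmainrange}. The framed $E_2$-algebra of interest is $X = \mr{Sym}^{\leq d}(\bR^2) = \bigsqcup_{k \geq 0} \mr{Sym}_k^{\leq d}(\bR^2)$, which is a $\N_0$-charged $\cdtf$-algebra in spaces (the $\cdtf$-action comes from functoriality of $\mr{Sym}^{\leq d}$ under orientation-preserving embeddings of $\bR^2$). By the identification recalled at the start of Section \ref{secapplications} (using Section 3.3 of \cite{Mi3}), the labelled configuration space $\mr{Sym}^{\leq d}(\Sigma)$ is weakly equivalent as a charged space to $\int_\Sigma X$, so it suffices to show that the basic stabilization map $t : \int_\Sigma^k X \to \int_\Sigma^{k+1} X$ induces an isomorphism on $H_*$ for $* \leq k/2$.

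First I would note that Proposition \ref{stabcalcboundedsym} already verifies hypothesis (i) of Theorem \ref{thmmainrange} for $X$ (equivalently for $C_*(X)$) with the explicit function $\rho(x) = x/2$. This $\rho$ satisfies every requirement of the theorem: it is strictly increasing, subadditive ($\rho(x+y) = \rho(x)+\rho(y)$), satisfies $\rho(x) \leq x/2$, and $\rho^{-1}(\N_0) = 2\N_0 \subset \N_0$. Applying the implication (i) $\Rightarrow$ (ii) of Theorem \ref{thmmainrange} to $C_*(X)$ yields that the basic stabilization map $t : \int_\Sigma^k C_*(X) \to \int_\Sigma^{k+1} C_*(X)$ is a $\rho(k) = k/2$-equivalence. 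The quasi-isomorphism $C_*(\int_\Sigma X) \simeq \int_\Sigma C_*(X)$ established in the proof of Corollary \ref{cortopstab} then transfers this conclusion back to spaces: $t : \int_\Sigma^k X \to \int_\Sigma^{k+1} X$ is a $k/2$-equivalence.

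Unwinding the definition, a $k/2$-equivalence is an isomorphism on $H_*$ for $* \leq k/2 - 1$ and a surjection for $* \leq k/2$. To upgrade the surjection in the top degree to an isomorphism, I would invoke Lemma A of \cite{Do}, which shows that the stabilization map $\mr{Sym}^{\leq d}_k(\Sigma) \to \mr{Sym}^{\leq d}_{k+1}(\Sigma)$ is always split injective on homology (the retraction is of a transfer nature and respects the bound on multiplicities). Combining injectivity in every degree with surjectivity in degrees $* \leq k/2$ gives the desired isomorphism range.

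The only subtle point — and the main obstacle — is ensuring the hypotheses align cleanly: identifying $\mr{Sym}^{\leq d}(\Sigma)$ with $\int_\Sigma \mr{Sym}^{\leq d}(\bR^2)$, checking that the function $\rho(x) = x/2$ coming from Proposition \ref{stabcalcboundedsym} satisfies the technical conditions of Theorem \ref{thmmainrange}, and bridging the gap between the $k/2$-equivalence delivered by the theorem and the isomorphism statement through Dold's injectivity. Once these are in place, the corollary is immediate.
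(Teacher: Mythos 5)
Your proposal is correct and follows essentially the same route as the paper: identify $\mr{Sym}^{\leq d}(\Sigma)$ with $\int_\Sigma \mr{Sym}^{\leq d}(\bR^2)$, feed the local stability of Proposition \ref{stabcalcboundedsym} (with $\rho(x)=x/2$) into the quantitative local-to-global principle of Theorem \ref{thmmainrange}, and upgrade the resulting $k/2$-equivalence to an isomorphism in the top degree using the split injectivity of the stabilization map from Lemma A of \cite{Do}. The paper's proof is exactly this combination, stated more briefly.
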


This greatly improves on the range of $* \leq \frac{k}{d+1}-d+3$ established in \cite{Y} and agrees with the range recently proved in \cite{kupersmillertran}.

\begin{corollary}
For $\Sigma$ an orientable connected non-compact surface, \[t\colon\mr{Div}^{d}_{\delta(k)}(\Sigma) \m  \mr{Div}^{d}_{\delta(k+1)}(\Sigma)\]
induces an isomorphism on $H_*$  for $* \leq k/2$.

\end{corollary}

Homological stability for the spaces $\mr{Div}^{d}_{\delta(k)}(\Sigma)$ was originally proven in \cite{Se}. Note that although Segal's range has a higher slope, our range has a higher constant term. Although $\mr{Div}_{\delta(k)}^{d}(\R^2)$ and $\mr{Sym}^{\leq d}_{k(d+1)}(\R^2)$ are homology equivalent, $\mr{Div}_{\delta(k)}^{d}(M)$ and $\mr{Sym}^{\leq d}_{k(d+1)}(M)$ are not homology equivalent for $M$ a surface not homeomorphic to $\R^2$. Thus the previous two corollaries are not equivalent.


\subsubsection{Divisor spaces associated to toric varieties}

Let $T$ be a toric variety with $H_2(T)$ torsion free. In \cite{Gu}, Guest introduced a configuration space $\Div^T(M)$ such that $\mr{Hol}^*(\C P^1,T)$ is a collection of components of $\Div^T(\R^2)$. Guest proved homological stability for the components of the spaces $\Div^T(\R^2)$. Using the local-to-global homological stability theorem, we can conclude that the components of $\Div^T(\Sigma)$ have homological stability for $\Sigma$ a non-compact connected orientable surface. As far as we know, this is a new result. This is a first step towards generalizing Guest's theorem on the topology of holomorphic maps to a toric variety to the case of maps out of a higher genus surface.

\subsubsection{A mysterious example}
In the introduction we mentioned other examples of $\cdt$-algebras where homological stability is known. Often there is no geometric interpretation of their topological chiral homology, as is the case in the following example. If $\Sigma_{g,1}$ denotes a genus $g$ surface with one boundary component and $\mr{Diff}^\partial(-)$ the topological group of diffeomorphisms fixing the boundary topologized with the $C^\infty$-topology, then
\[\bigsqcup_{g \geq 0} B\mr{Diff}^\partial(\Sigma_{g,1})\]
is an $\cE^\mr{SO}_2$-algebra. Since not all Browder operations vanish (Theorem 2.5 of \cite{fiedsong}), this action does not extend to an action of the $\cE^\mr{SO}_3$-operad. Homological stability for this $\cE^\mr{SO}_2$-algebra was proven in \cite{harerstab} (see also \cite{wahlmcg}). Thus, the spaces 
\[\int_{\Sigma}^k \left(\bigsqcup_{g \geq 0} B\mr{Diff}^\partial(\Sigma_{g,1})\right)\]  have homological stability for $\Sigma$ any orientable connected non-compact surface. We know of no geometric interpretation of this result. Using \cite{sorenoscarstability}, one can construct similar higher-dimensional examples.

\subsection{Homological stability via $\cdt$-cell decompositions} \label{subseccells} Sometimes it is possible to identify $\cdt$-cell decompositions (or at least bound where $\cdt$-cells are attached) and then use Theorem \ref{thmmain} to deduce homological stability. In this subsection we will apply this idea to $\cdt$-algebras built out of completions of partial $\cdt$-algebras under certain assumptions about the partial monoid of connected components.


\subsubsection{Completing partial monoids} Any partial abelian monoid $C$ can be completed to an abelian monoid $c(C)$. Abstractly this is the left adjoint to the inclusion of abelian monoids into partial abelian monoids, and concretely it is given by setting $c(C)$ to be given by formal finite sums $\bc_1 \boxplus \cdots \boxplus \bc_k$ of elements of $C$ under the equivalence relation generated by the fact that $\boxplus$ is commutative and $\bc_1 \boxplus \bc_2 \sim \bc_1 + \bc_2$ if $\bc_1 + \bc_2$ is defined. In all cases we consider, $c(C) = \bN_0^d$. To distinguish between the monad $\bdt$ in the category of $C$-charged chain complexes and $c(C)$-charged chain complexes, we denote the former by $\bdtc$ and the latter by $\bdt$.

If $C \m D$ is a map of partial monoids, we get an induced functor from the category of $C$-charged algebras to the category of partial $D$-charged algebras. Thus, any $C$-charged algebra $A$ can be viewed as partial $c(C)$-charged algebra and then completed to form an actual $c(C)$-charged algebra. To describe this completion procedure concretely, we define a partial $c(C)$-charged algebra structure on  a $C$-charged algebra $A$ by setting 
\[A_p = B_p(\bdt \circ \iota,\bdtc,A)\] where $\iota$ is the inclusion of $\cat{Ch}_C$ into $\cat{Ch}_{c(C)}$.

\begin{definition}Let $A$ be a $C$-charged algebra, then the $c(C)$-completion $\bar{A}$ is defined to be the realization $|A_\bullet|$. This is a $c(C)$-charged algebra.\end{definition} 

Suppose $A$ is a $C$-charged algebra and we have an element $b_{N-1} \in A(k)_{N-1}$, then there are two things we can do. First, we could define $A \uplus e_N$ in $C$-charged algebras by attaching an $\cdt$-cell to $b_{N-1}$, view it as a partial $c(C)$-charged algebra and complete it to a $c(C)$-charged algebra $\overline{A \uplus e_N}$. Alternatively, we could view $A$ as a partial $c(C)$-charged algebra and complete it to a $c(C)$-charged algebra $\bar{A}$ and then attach an $\cdt$-cell to the image of $\mr{id} \otimes b_{N-1}$ in simplicial degree $0$ to get a $c(C)$-charged algebra $\bar{A} \uplus e_N$. A formal argument should tell us that $\overline{A \uplus e_N}$ and $\bar{A} \uplus e_N$ are the equivalent (they are the values of two functors that are derived left adjoint to the same functor). Since we have not set up the categorical framework needed to make this precise, we instead give a concrete simplicial argument.

\begin{proposition}Let $A$ be a $C$-charged algebra. The $c(C)$-charged algebras $\bar{A} \uplus e_N$ and $\overline{A \uplus e_N}$ are weakly equivalent. 

\label{completecommuteswithcells}

\end{proposition}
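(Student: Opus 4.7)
My plan is to realize both $\overline{A\uplus e_{N}}$ and $\bar{A}\uplus e_{N}$ as iterated realizations of a common bisimplicial $c(C)$-charged chain complex $W_{\bullet,\bullet}$, and then apply Fubini for realization of bisimplicial chain complexes. Conceptually, completion is the derived left adjoint to the restriction functor $\iota^{*}$ from $c(C)$-charged algebras to $C$-charged algebras, and cell attachment is a homotopy pushout; left adjoints preserve homotopy pushouts, so completion commutes with cell attachment. The bisimplicial construction is meant to make this precise in the absence of the full derived-adjunction formalism.

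Concretely, I would set
$$W_{p,q}=\bdt\bigl((\bdt)^{p+1}\iota(\bdtc)^{q}(A)\oplus \mr{id}^{p}\otimes e_{N}\bigr),$$
where the $p$-direction carries the partial algebra structure of Definition \ref{defpartialalgebracomp} used in cell attachment (acting on the outer $\bdt$-monad layers and on the $\mr{id}^{p}\otimes e_{N}$ slot), and the $q$-direction carries the bar construction used in completion (acting on the inner $(\bdtc)^{q}(A)$). These two structures commute since they act on disjoint layers, so $W_{\bullet,\bullet}$ is genuinely bisimplicial. Realizing in the $q$-direction first, using that $\bdt$, $(\bdt)^{p+1}$, direct sum, and tensoring with a fixed chain complex all preserve geometric realizations (as left adjoints or sifted colimits), together with $\bar{A}=||q\mapsto \bdt\iota(\bdtc)^{q}(A)||$, identifies $||q\mapsto W_{p,q}||$ with $\bdt((\bdt)^{p}\bar{A}\oplus \mr{id}^{p}\otimes e_{N})=E_{p}$, the $p$-simplex of the partial algebra defining $\bar{A}\uplus e_{N}$. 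Subsequent realization in $p$ gives $\bar{A}\uplus e_{N}$.

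Realizing in the $p$-direction first, for fixed $q$ the simplicial object $p\mapsto W_{p,q}$ is precisely the partial algebra attaching $e_{N}$ to the free $c(C)$-charged algebra $\bdt\iota(\bdtc)^{q}(A)$, so its realization is $\bdt\iota(\bdtc)^{q}(A)\uplus e_{N}$. The map provided by Lemma \ref{lemattachmap}, obtained from the inclusion $A\to A\uplus e_{N}$ and sending $e_{N}$ to the image of $e_{N}\in A\uplus e_{N}$, yields a map of simplicial $c(C)$-charged algebras
$$\bdt\iota(\bdtc)^{q}(A)\uplus e_{N}\;\longrightarrow\;\bdt\iota(\bdtc)^{q}(A\uplus e_{N}),$$
whose realization in $q$ produces, on the right, precisely $\overline{A\uplus e_{N}}$. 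If this map is a levelwise weak equivalence (or induces one on realizations), then combined with the $q$-first realization we conclude $\bar{A}\uplus e_{N}\simeq \overline{A\uplus e_{N}}$ by Fubini.

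The hard part is verifying that this comparison map is a weak equivalence after realization in $q$. To attack it I would employ a secondary bisimplicial resolution: expand $A\uplus e_{N}=||p'\mapsto D_{p'}||$ so that $\bdt\iota(\bdtc)^{q}(A\uplus e_{N})=||p'\mapsto \bdt\iota(\bdtc)^{q}(D_{p'})||$, and use that each $D_{p'}=\bdtc(X_{p'})$ with $X_{p'}=(\bdtc)^{p'}(A)\oplus \mr{id}^{p'}\otimes e_{N}$ is free, so that the bar construction $q\mapsto \bdt\iota(\bdtc)^{q+1}(X_{p'})$ admits an extra degeneracy (as in Example \ref{examptrivialpartial}) and realizes to $\bdt\iota(X_{p'})=\bdt((\bdt)^{p'}\iota(A)\oplus \mr{id}^{p'}\otimes e_{N})$. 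A careful comparison, tracking the partial algebra subobject conditions restricting $e_{N}$ to identity operations on each side, then reduces both realizations to the same iterated colimit, completing the proof.
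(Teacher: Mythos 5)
Your setup is, structurally, the paper's own: your $W_{p,q}$ coincides with the bisimplicial object the paper calls $P^1_{p,q}$, and the ``secondary bisimplicial resolution'' you invoke at the end is its $P^2_{p,q}$. The $q$-first realization giving $\bar{A}\uplus e_N$ and the $p$-first realization giving $||q\mapsto \bdt\iota(\bdtc)^q(A)\uplus e_N||$ are fine (modulo the standard fact that the free functors commute with realization, which the paper also uses tacitly, and modulo checking that the maps of Lemma \ref{lemattachmap} assemble into a simplicial map in $q$). But the decisive step --- that your comparison map into $\overline{A\uplus e_N}$ becomes an equivalence after realizing in $q$ --- is exactly the content of the proposition, and you do not prove it: it is deferred to ``a careful comparison\ldots reduces both realizations to the same iterated colimit.''

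Moreover, the one computation you offer at that point is wrong: after the extra-degeneracy collapse one gets $\bdt\iota(X_{p'})=\bdt\bigl(\iota(\bdtc)^{p'}(A)\oplus\mr{id}^{p'}\otimes e_N\bigr)$, not $\bdt\bigl((\bdt)^{p'}\iota(A)\oplus\mr{id}^{p'}\otimes e_N\bigr)$; indeed $\iota\circ(\bdtc)^{p'}\neq(\bdt)^{p'}\circ\iota$, since the free $C$-charged algebra has strictly fewer operations than the free $c(C)$-charged one (for instance it vanishes in charges outside $C$), and this distinction is precisely what the proposition is about. The corrected collapse yields the paper's intermediate simplicial object $P_{p'}=\bdt\iota\bigl((\bdtc)^{p'}(A)\oplus\mr{id}^{p'}\otimes e_N\bigr)$, i.e.\ $\overline{A\oplus e_N}$, and you would then still have to identify your $p$-first realization $||q\mapsto \bdt\iota(\bdtc)^q(A)\uplus e_N||$ with $||P_\bullet||$; these are not even indexed compatibly, so a further argument is required. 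That missing argument is exactly what the paper supplies: for fixed $q$ the simplicial object $p\mapsto W_{p,q}$ admits an augmentation to $P_q$ with an extra degeneracy coming from the unit of the $\bdt$ in the $q$-direction (in effect: attaching a cell to a free algebra along a generator-level class returns the free algebra on the enlarged complex), and a symmetric augmentation with extra degeneracy reduces the $\overline{A\uplus e_N}$ side to the same $P_\bullet$. So the repair is not a direct comparison map between the two sides but routing both, levelwise, through the explicit middle object $\overline{A\oplus e_N}$ via these two extra-degeneracy augmentations.
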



\begin{proof} As both $\bar{A} \uplus e_N$ and $\overline{A \uplus e_N}$ are completions of completions, they are naturally realizations of bisimplicial chain complexes. We will not compare them directly but instead compare both of them to a simplicial chain complex $\overline{A \oplus e_N}$ obtained as the completion of the partial $c(C)$-charged algebra $A \oplus e_N$. To define a partial $c(C)$-charged algebra structure on $A \oplus e_N$, we take $\mr{Comp}_1$ to be the subspace of $\bdt(A \oplus e_N)$ spanned by $\bdtc(A)$ and $\mr{id} \otimes e_N$, i.e. the only compositions that are defined involve elements of $A$ where the total charge is in $C$ or the identity acting on $e_N$. More precisely, $\overline{A \oplus e_N}$ is the geometric realization of the simplicial object $P_\bullet$ defined by 
\[[p] \mapsto P_p \coloneqq \bdt\left[(\bdtc)^p(A) \oplus (\mr{id}^p \otimes e_N)\right] \subset B_p(\bdt \circ \iota,\bdtc,A \oplus e_N)\]





We will now prove using an extra degeneracy argument that there is a weak equivalence
\[\bar{A} \uplus e_n \to \overline{A \oplus e_N}\]

As discussed above, the $c(C)$-charged algebra $\bar{A}$ is the realization of the simplicial object $B_\bullet(\bdt \circ \iota,\bdtc,A)$ and by Definition \ref{defpartialalgebracomp}, $c(C)$-charged algebra $\bar{A} \uplus e_N$ is the realization of the simplicial object $[[p] \mapsto \bdt \left((\bdt)^p(\bar A) \oplus (\mr{id}^p \otimes e_{N}) \right)$. Thus, $\bar{A} \uplus e_N$ is the realization of the bisimplicial object $P_{\bullet\bullet}^1$ given by 
\[[p,q] \mapsto P^1_{pq} \coloneqq \bdt \left[ (\bdt)^p \left(  B_q(\bdt \circ \iota,\bdtc,A) \right) \oplus (\mr{id}^p \otimes e_{N}) \right]  \] We will now define an augmentation map for its $p$-direction
\[\epsilon \colon P_{0q}^1= \bdt \left[\left(\bdt(\bdtc)^q(A)\right) \oplus e_{N} \right] \to P_q = \bdt \left[(\bdtc)^q ( A) \oplus (\mr{id}^q \otimes e_{N}) \right] \] There are maps
\[ \bdt     \left [\left(\bdt (\bdtc)^q (A)\right) \oplus e_{N} \right] \m  \bdt     \left [\bdt (\bdtc)^q (A \oplus e_{N}) \right] \to \bdt (\bdtc)^q  (A \oplus e_N) \]
the first of which is induced by the monad unit maps $1 \to \bdt$ and $1 \to \bdtc$, and the second of which is induced by the monad multiplication map $\bdt \bdt \m \bdt$. The image of the composition of these two maps is contained in $P_q$ and so we get a map  $P_{0q}^1 \m P_q$. The two maps $\epsilon \circ d_0, \epsilon \circ d_1 \colon P_{1q}^1 \m P_q$ agree as they only involve monad composition in the copies of $\bdt$, so this is an augmentation map. 

We thus get an augmented simplicial object $P^1_{\bullet q}$ by setting $P^1_{-1,q} = P_q$ and this has an extra degeneracy coming from the unit map $1 \m \bdt$. More specifically, to define the extra degeneracy $P^1_{pq} \m P^1_{p+1,q}$ for $p \geq 0$, we insert this identity to the left of the $\bdt$ factor coming from $B_q(\bdt \circ \iota,\bdtc,A)$ in \[P^1_{pq} = \bdt\left[(\bdt)^p\left( B_q(\bdt \circ \iota,\bdtc,A) \right) \oplus (\mr{id}^p \otimes e_{N})\right]  \] Similarly, the unit gives us a map $P^1_{-1,q} = P_q \m P^1_{0,q}$.  Thus the augmentation induces a weak equivalence $|P^1_{\bullet q}| \to P_q$. Since level wise weak equivalences realize to weak equivalence, we conclude that $\bar{A} \uplus e_N \simeq |P^1_{\bullet \bullet}| \to |P_\bullet| \simeq \overline{A \oplus e_N}$ is a weak equivalence.


We next prove that $\overline{A \uplus e_N} \simeq \overline{A \oplus e_N}$. One can use an argument as above, but we shall quote a result from the literature. The algebra $\overline{A \uplus e_N}$ is the realization of the bisimplicial object \[[p,q] \mapsto B_p\left[\bdt \circ \iota,\bdtc, \bdtc\left(   (\bdtc)^q (  A) \oplus (\mr{id}^q \otimes e_{N}) \right) \right] \] Theorem 9.10.3 of \cite{M} implies that this is equivalent to the realization of the simplicial object with $q$ simplices  given by $[q] \mapsto \bdt \left((\bdtc)^q (  A) \oplus (\mr{id}^q \otimes e_{N}) \right)$, which is exactly $P_\bullet$. 

Thus $\bar{A} \uplus e_N$ and $\overline{A \uplus e_N}$ are both weakly equivalent to $\overline{A \oplus e_N} = |P_\bullet|$ and hence to each other.


\end{proof}

Using this proposition, we prove homological stability for $\cdt$-algebras formed by completing certain partial algebras. Let $[d]=\{0,1,\ldots,d\} \subset \bN_0$ viewed as a partial monoid with addition. Completions of partial algebras with monoid of components $[d]$ were studied before in \cite{kupersmillercompletions}. 

\begin{proposition}

\label{completionwithfunction}
 Let $\rho \colon \R_{\geq 0} \m \R_{\geq 0}$ be a strictly increasing function satisfying $\rho(a+b) \leq \rho(a)+\rho(b)$ for $a,b \in \bR_{\geq 0}$, $\rho(s) \leq s/2$ for all $s \in \R_{\geq 0}$ and $\rho^{-1}(\N_0) \subset \N_0$. Let $A$ be a $[d]$-charged algebra with $t \colon A_k \m A_{k+1}$ a $\rho(k)$-equivalence for $k <d$ and let $\bar A$ denote its completion as a $\N_0$-charged algebra. For any oriented connected non-compact $n$-dimensional manifold $A$, we have that
\[t \colon \int^k_M \bar{A} \to \int^{k+1}_M \bar{A}\]
is a $\rho(k)$-equivalence.\end{proposition}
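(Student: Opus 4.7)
The plan is to reduce the statement to Theorem \ref{thmmainrange} by building a bounded cellular $[d]$-charged model of $A$ and then completing it.

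First, I would adapt Proposition \ref{propstabimpliesbound} to the $[d]$-charged setting. Starting from the free $[d]$-charged $\cdt$-algebra $B^{(0)}$ on a single generator in degree $0$ and charge $1$, a choice of representative $a_1 \in A(1)$ of a generator of $H_0(A(1))$ gives a map $B^{(0)} \to A$ that is an isomorphism on $H_0$ in every charge $k \leq d$. Proceeding by induction on homological degree, I would attach $\cdt$-cells using the two-step isomorphism/surjectivity procedure of Proposition \ref{propstabimpliesbound}, restricted to charges $k \leq \min(d, \rho^{-1}(N+1))$ at stage $N+1$. The stability-transport argument from Step 3 of that proof carries over: an easy $[d]$-charged variant of Proposition \ref{propcntinft} shows that the free $[d]$-charged $\cdt$-algebra (and hence each intermediate cellular algebra, by Corollary \ref{corattachstab}) has stability range $\rho$ for the stabilizations $A(k) \to A(k+1)$ with $k < d$, and $A$ has this stability by hypothesis. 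This produces a bounded cellular $[d]$-charged algebra $B$ with a quasi-isomorphism $B \to A$ such that every $\cdt$-cell of degree $N$ is attached in a charge $k$ with $\rho(k) \leq N$.

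Next, I would take the $\N_0$-completion $\bar B$. Iterating Proposition \ref{completecommuteswithcells} over the cell filtration, $\bar B$ is weakly equivalent to the $\N_0$-charged $\cdt$-algebra obtained from $\overline{B^{(0)}}$---which, by an extra-degeneracy argument, is the free $\N_0$-charged $\cdt$-algebra on one generator in degree $0$ and charge $1$---by attaching the same $\cdt$-cells in the same charges and degrees. In particular $\bar B$ is bounded cellular and satisfies the cell-charge bound of Theorem \ref{thmmainrange}(iii). Applying Theorem \ref{thmmainrange} then yields that $t: \int_M^k \bar B \to \int_M^{k+1} \bar B$ is a $\rho(k)$-equivalence for any $\theta$-framed connected non-compact $n$-manifold $M$. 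Finally, the quasi-isomorphism $B \to A$ induces a quasi-isomorphism $\bar B \to \bar A$ (the bar complex defining the completion preserves levelwise quasi-isomorphisms for the cellular $B$ in question), so Lemma \ref{lemtchcomp} transports the $\rho(k)$-equivalence to $\int_M^k \bar A \to \int_M^{k+1} \bar A$.

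The main obstacle is the first step: verifying that the inductive cell-attachment construction and its stability-transport argument both adapt to the partial monoid $[d]$. The delicate point is that stability in the $[d]$-charged setting only provides equivalences $A(k) \to A(k+1)$ for $k < d$, so when $\rho^{-1}(N+1) < d$ one must genuinely invoke stability to propagate an $(N+1)$-equivalence from charges $\leq \rho^{-1}(N+1)$ up to charge $d$, while when $\rho^{-1}(N+1) \geq d$ the cells already cover every charge in $[d]$ directly and no transport is needed. A secondary technical point is confirming that Proposition \ref{completecommuteswithcells} iterates cleanly over the cell filtration, but this should follow directly from the simplicial nature of the completion construction.
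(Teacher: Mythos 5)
Your outline is correct and uses the same key ingredients as the paper (a cellular approximation built by mimicking Proposition \ref{propstabimpliesbound} with cells attached only in charges $\leq d$ and above $\rho$, an iterated application of Proposition \ref{completecommuteswithcells}, the implication (iii) $\Rightarrow$ (ii) of Theorem \ref{thmmainrange}, and transport along a quasi-isomorphism), but the order of operations differs in a way that matters for how much machinery you must redevelop. You build the cellular model $B$ in the category of $[d]$-charged algebras, map it to $A$, and only then complete; this forces you to re-prove $[d]$-charged analogues of Proposition \ref{propcntinft}, Corollary \ref{corattachstab}, and the Step~3 stability-transport argument over the partial monoid $[d]$ --- exactly the point you flag as the main obstacle. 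The paper instead runs the cell-attachment induction entirely in $\N_0$-charged algebras, constructing $B \to \bar{A}$ directly and simply stopping the attachments at charge $d$, so that only the already-established $\N_0$-charged statements are invoked; the truncation to charges $\leq d$ enters only at the end, where one observes that $B \simeq \overline{B^{\leq d}}$ by iterating Proposition \ref{completecommuteswithcells}, that $B^{\leq d} \to A$ is an equivalence, and hence that $B \to \bar{A}$ is an equivalence in all charges, after which Theorem \ref{thmmainrange} applies to $\bar{A}$ itself. Your version can certainly be completed (the $[d]$-charged spectral sequence arguments do go through, since Proposition \ref{propsscompl} already accounts for undefined charges $\bc - p\bk$), but the paper's ordering buys you the result without any partial-monoid variants of the stability machinery, and you may want to note, as the paper implicitly does, that in the regime $\rho^{-1}(N+1) \geq d$ no transport is needed at all, while for $\rho^{-1}(N+1) < d$ the transport only has to reach charge $d$, so the hypothesis on $t: A_k \to A_{k+1}$ for $k < d$ suffices.
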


\begin{proof}
Mimicking the proof of Proposition \ref{propstabimpliesbound} and stopping after attaching $\cdt$-cells in charge $d$, we can construct a cellular $\cdt$-algebra $B$ and a map $B \m \bar A$ such that $B_k \m \bar{A}_k$ is an equivalence for all $k \leq d$ and all $\cdt$-cells of $B$ are attached above $\rho$ and in charge $\leq d$. By the implication (iii) $\Rightarrow$ (ii) of Theorem \ref{thmmainrange}, $t \colon \int^k_M B \to \int^{k+1}_M B$ is a $\rho(k)$-equivalence. Let $B^{\leq d}$ denote the partial $\cdt$-algebra formed by only considering components of charge $\leq d$. By an iterated application of Proposition \ref{completecommuteswithcells}, we have that $\overline{B^{\leq d}} \simeq B$. The map $B^{\leq d} \m A$ is an equivalence. Thus, $\overline{B^{\leq d}} \to B \m \bar A$ is an equivalence. By Theorem \ref{thmmainrange} we have that $t \colon \int^k_M \bar{A} \to \int^{k+1}_M \bar{A}$ is a $\rho(k)$-equivalence.
\end{proof}

This proposition has implications for completions (in the category of $\N_0$-charged algebras) of $[d]$-charged algebras without making any assumptions on their homology. Such a result was proven in Theorem 1.1 of \cite{kupersmillercompletions} but with a worse range. The techniques of \cite{kupersmillercompletions} are completely different than those used here and follow the traditional approach to proving homological stability introduced by Quillen. 

\begin{corollary}
\label{completionsnoassumption}
Let $A$ be a $[d]$-charged algebra, $\bar A$ the completion in the category of $\N_0$-charged algebras and $M$ a $\theta$-framed  connected non-compact $n$-dimensional manifold, then we have that
\[t \colon \int^k_M \bar{A} \to \int^{k+1}_M \bar{A}\]
is a $\rho(k)$-equivalence for $\rho(k) = \min(k/2,k/d)$.\end{corollary}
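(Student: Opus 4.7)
The plan is to upgrade the cellular construction underlying Proposition~\ref{completionwithfunction} so that no homological stability hypothesis on $A$ is used. The key observation is that every attached $\cdt$-cell in charge $|\bk| \leq d$ and homological degree $N \geq 1$ automatically satisfies $\rho(|\bk|) = \min(|\bk|/2,|\bk|/d) \leq 1 \leq N$, so any cellular model of $\bar A$ all of whose attached cells sit in charges $\leq d$ and degrees $\geq 1$ automatically lies in $\cC_\infty^\rho$ for $\rho(k) = \min(k/2,k/d)$.

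First I would mimic the cell-by-cell construction in the proof of Proposition~\ref{propstabimpliesbound}, but carried out inside the category of $[d]$-charged algebras. Starting from the initial $[d]$-charged algebra with a single degree-$0$ generator in charge $1$, at each stage $N \geq 1$ one inductively attaches $\cdt$-cells of degree $N$ (using Corollary~\ref{corattachexactseq} and Lemma~\ref{lemattachmap}) to kill the kernel on $H_{N-1}$ and to cover the cokernel on $H_N$ of the current map to $A$. Because the target $A$ is $[d]$-charged, every cell introduced by this procedure has charge in $[d]$ automatically; the bound $|\bk| \leq \rho^{-1}(N+1)$ that required the stability hypothesis in Proposition~\ref{propstabimpliesbound} is replaced here by the a priori bound $|\bk| \leq d$, so no hypothesis on the stability range of $A$ is needed. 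The output is a bounded cellular $[d]$-charged algebra $B$ together with a quasi-isomorphism $B \to A$, all of whose attached cells lie in charges $\leq d$ and degrees $\geq 1$.

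Next I would pass to $\N_0$-charged algebras via completion: by iterated application of Proposition~\ref{completecommuteswithcells} together with functoriality of the completion on morphisms of partial algebras, $\bar B$ is bounded cellular with exactly the same attached cells as $B$, and the induced map $\bar B \to \bar A$ is a weak equivalence of $\N_0$-charged algebras. By the opening observation, $\bar B$ lies in $\cC_\infty^\rho$, so the implication (iii) $\Rightarrow$ (ii) of Theorem~\ref{thmmainrange} yields homological stability with range $\rho$ for $\int_M \bar B$; combining with the equivalence $\bar B \simeq \bar A$ and Lemma~\ref{lemtchcomp} then gives the desired conclusion that $t\colon \int_M^k \bar A \to \int_M^{k+1} \bar A$ is a $\rho(k)$-equivalence.

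The main technical obstacle I anticipate is justifying the iterated use of Proposition~\ref{completecommuteswithcells} when many cells are attached simultaneously across infinitely many stages. Rather than literally iterating the proposition one cell at a time, the cleanest route is to realize both $B$ and $\bar B$ as realizations of a single bisimplicial object indexed by the cell filtration (analogous to $P^1_{\bullet\bullet}$ in the proof of Proposition~\ref{completecommuteswithcells}) and compare directly with the corresponding $\N_0$-charged bisimplicial object; once this bookkeeping is in place, the rest is a routine application of Theorem~\ref{thmmainrange}.
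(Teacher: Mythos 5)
Your argument is correct, and it essentially re-derives Proposition \ref{completionwithfunction} in this special case rather than citing it, so the route differs from the paper's in packaging if not in substance. The paper's own proof is a one-liner: with $\rho(k)=\min(k/2,k/d)$ one has $\rho(k)<1$ for $k<d$, so the required $\rho(k)$-equivalences $t\colon A_k \to A_{k+1}$ amount to at most a surjection on $H_0$, which holds automatically for a connected charged algebra; hence Proposition \ref{completionwithfunction} applies verbatim. Your version makes the same vacuity observation on the cell side instead of the map side --- any $\cdt$-cell in charge $\leq d$ and homological degree $N \geq 1$ satisfies $N \geq 1 \geq \rho(\text{charge})$ --- and then reruns the machinery behind Proposition \ref{completionwithfunction}: build a cellular model, invoke Proposition \ref{completecommuteswithcells} iteratively, and feed the result into the implication (iii) $\Rightarrow$ (ii) of Theorem \ref{thmmainrange} together with Lemma \ref{lemtchcomp}. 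The one genuine difference in order of operations is that you construct the cellular approximation inside $[d]$-charged algebras first and complete afterwards, whereas the paper (in the proof of Proposition \ref{completionwithfunction}) builds the cellular model for $\bar{A}$ in $\N_0$-charged algebras and then truncates to charges $\leq d$; your direction is equally legitimate, and your correct observation that Step 3 of Proposition \ref{propstabimpliesbound} becomes unnecessary (because all charges are a priori $\leq d$) is exactly why no stability hypothesis on $A$ is needed. Two small points to make explicit if you write this up: (a) you need that completion sends your quasi-isomorphism $B \to A$ to a weak equivalence $\bar{B} \to \bar{A}$, which follows by a levelwise spectral-sequence comparison as in Lemma \ref{lemtchcomp}; and (b) for $\bar{B}$ to be bounded cellular in the sense of Definition \ref{defboundedcellular} you also need that the completion of the free $[d]$-charged algebra on the degree-zero generator is the free $\N_0$-charged algebra on that generator (an extra-degeneracy argument), which your proposed bisimplicial bookkeeping covers. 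What your approach buys is self-containedness and a direct view of where the cells sit; what the paper's buys is brevity, since all the work was already done in Proposition \ref{completionwithfunction}.
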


\begin{proof} We note that for any $[d]$-charged algebra $A$, $t \colon A_k \m A_{k+1}$ is a $\rho(k)$-equivalence for $k <d$.\end{proof}

In \cite{kupersmillercompletions}, it was remarked that the range established in Corollary \ref{completionsnoassumption} is optimal. 

\subsubsection{Bounded symmetric powers revisited}

\label{boundedrevisited}
We can apply Proposition \ref{completionwithfunction} to the case of bounded symmetric powers.  

\begin{corollary}
Let $M$ be a oriented path-connected non-compact manifold of dimension at least $2$. The stabilization map $t\colon \Sym^{\leq d}_k(M) \m \Sym^{\leq d}_{k+1}(M)$ induces an isomorphism on $H_*$ for $* \leq k/2$. 
\end{corollary}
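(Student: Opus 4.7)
The plan is to realize $\mr{Sym}^{\leq d}(\R^n)$ as the completion of a $[d]$-charged algebra and apply Proposition \ref{completionwithfunction}, in exact parallel to how Proposition \ref{completionwithfunction} gave Corollary \ref{completionsnoassumption} (indeed, this is essentially a refinement of that corollary using that the algebra is particularly simple below the threshold).

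First, I would let $A$ denote the $[d]$-charged $\cdtf$-algebra obtained by truncating $\mr{Sym}^{\leq d}(\R^n)$ to charges $\leq d$: concretely $A(k) = \mr{Sym}_k(\R^n)$ for $0 \leq k \leq d$, since the bound is vacuous in this range, with the partial algebra structure inherited from $\mr{Sym}^{\leq d}(\R^n)$. The key observation is that each $A(k)$ is contractible, so the stabilization maps $t: A(k) \to A(k+1)$ for $k < d$ are equivalences and in particular $\rho(k)$-equivalences for $\rho(k) = k/2$, which satisfies the hypotheses on $\rho$ in Proposition \ref{completionwithfunction}.

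Second, I would identify the $\N_0$-completion $\bar A$ with $\mr{Sym}^{\leq d}(\R^n)$. The inclusion $A \hookrightarrow \mr{Sym}^{\leq d}(\R^n)$ (as a map of partial $\N_0$-charged algebras) induces a canonical map $\bar A \to \mr{Sym}^{\leq d}(\R^n)$, and I would argue this is a weak equivalence. This is essentially the content of the discussion of completions of $[d]$-charged algebras in \cite{kupersmillercompletions}: intuitively, any point of $\mr{Sym}^{\leq d}_k(\R^n)$ is canonically a configuration of disjoint $\cdtf$-disks each labeled by an element of some $A(k_i)$ with $k_i \leq d$, so $\mr{Sym}^{\leq d}(\R^n)$ is built from $A$ exactly by freely adding composites that were not defined in the partial structure.

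Finally, Proposition \ref{completionwithfunction} applied with this $A$ and $\rho(k) = k/2$ yields that $t: \int^k_M \bar A \to \int^{k+1}_M \bar A$ is a $(k/2)$-equivalence. Combining this with $\int_M \bar A \simeq \int_M \mr{Sym}^{\leq d}(\R^n) \simeq \mr{Sym}^{\leq d}(M)$ (as noted at the start of Section \ref{secapplications}, using Section 3.3 of \cite{Mi3}) gives the desired conclusion. The main obstacle is the identification $\bar A \simeq \mr{Sym}^{\leq d}(\R^n)$ in step two; everything else is either formal or contained in a direct application of a previously proved proposition.
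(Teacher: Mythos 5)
Your proposal is exactly the paper's argument: the paper's proof consists of observing that $\mr{Sym}^{\leq d}_k(\R^n)$ is contractible for $k \leq d$ and then invoking Proposition \ref{completionwithfunction} with $\rho(k)=k/2$, together with the identifications $\bar{A} \simeq \mr{Sym}^{\leq d}(\R^n)$ and $\int_M \mr{Sym}^{\leq d}(\R^n) \simeq \mr{Sym}^{\leq d}(M)$ that are left implicit (relying on the remarks at the start of Section \ref{secapplications} and on \cite{kupersmillercompletions}). You simply make those implicit identifications explicit, so this is essentially the same proof.
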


\begin{proof}
For $k \leq d$, $\Sym^{\leq d}_k(\R^n)$ is contractible, so Proposition \ref{completionwithfunction} applies with $\rho(k)=k/2$.
\end{proof}

This result greatly improves on the range of $* \leq \frac{k}{2d}$ established in \cite{kupersmillercompletions} which was the only integral homological stability range known for bounded symmetric powers in high dimensions. We note that the arguments of \cite{Y} and Section \ref{boundedsymmetricpowerssubsectionlocaltoglobal} all use that $M$ is two-dimensional in a crucial way -- either using that $\Sym(\C)$ is a manifold or that $\Omega^2 S^3 \times \Z \simeq \Omega^2 S^2$ -- and thus those techniques cannot be used to achieve this range in higher dimensions. A rational range with slope $1$ was proven in \cite{kupersmillertran} using the fact that $\Sym(M)$ is an orbifold. Using the techniques of this paper, we can reprove this rational range and show that it also holds with $\Z[1/2]$-coefficients. 

Using Theorem \ref{thmcompact}, we also conclude that the scanning map \[s\colon \Sym^{\leq d}_k(M) \m \mr{Map}^c_k\left(M,B^n \Sym^{\leq d}(\R^n)\right)\] induces an isomorphism on homology in the range $* \leq k/2$ for $M$ not necessarily non-compact, but framed. This improves on the best previously known range, a range of $* \leq \frac{k-d}{2d}$ established in \cite{kupersmillercompletions}.

Recall that in Remark \ref{cellsintop}, we described cell atachments  in $\cat{Top}$. In the case $n=2$ and trivial tangential structure, it is actually possible to completely describe an $E_n$-cell structure on $\Sym^{\leq d}(\R^n)$. This result was known to S\o ren Galatius and Jacob Lurie, and we learned it from them through personal communication. It is related to the results in Section 5 of \cite{lurierotation}, which also discusses $BU$ as a $\cE^\mr{pt}_2$-algebra.

\begin{proposition}There exists an $\cE^\mr{pt}_2$-cell decomposition of $\mr{Sym}^{\leq d}(\R^2)$ with exactly one $\cE^\mr{pt}_2$-cell in dimensions $0,2,\ldots,2(d-1)$. The cell of dimension $2i$ is attached in charge $i+1$. \label{LuExample} \end{proposition}
\begin{proof}We proceed by induction on $d$. The fact that we are working with trivial framings implies that the statement is true for $d=1$. By Lemma 2.7 of \cite{GKY2}, $\mr{Sym}^{\leq d}_{d+1}(\R^2) \simeq S^{2d-1}$. Attaching an $\cE^\mr{pt}_2$-cell to this sphere makes $\mr{Sym}^{\leq d}_{d+1}(\R^2)$ contractible and hence equivalent to $\mr{Sym}^{\leq d+1}_{d+1}(\R^2)$. For $k \leq d$, $\mr{Sym}^{\leq d+1}_{k}(\R^2)$ and $\mr{Sym}^{\leq d}_{k}(\R^2)$ are both contractible and hence homotopy equivalent. Using Proposition \ref{completecommuteswithcells}, we see that $\mr{Sym}^{\leq d+1}(\R^2)$ is equivalent to $\mr{Sym}^{\leq d}_{k}(\R^2)$ with a $(2d-2)$-dimensional $\cE^\mr{pt}_2$-cell attached to the sphere representing a generator of $\pi_{2d-3}(\mr{Sym}^{\leq d}_{d+1}(\R^2))$. This proves the inductive step. \end{proof}


In the next proposition we will show that an $\cdtt$-cell decomposition of an $\cdtt$-algebra $A$ induces an ordinary cell decomposition of $B^n A$. A similar result should hold for $\cdt$-algebras and cells in spaces with an action of $\mr{Bun}^\theta(T \R^n,T \R^n) \simeq \Omega W$.

\begin{proposition}\label{cellofBn} \label{propbncell} Let $X$ be an $\cdtt$-algebra in $\cat{Top}$, then we have that 
	\[B^n(X \uplus D^N) \simeq (B^n X) \cup_{S^{n+N-1}} D^{n+N}\]\end{proposition}

\begin{proof}The $\cdtt$-algebra $X \uplus D^N$ is defined as the realization $|\bdtt(\mr{Comp}_\bullet)|$ with
\[\mr{Comp}_p = (\bdtt)^p(X) \cup_{S^{N-1}} D^N\]
where the attaching map $S^{N-1} \to (\bdtt)^p(X)$ is induced from the map $S^{N-1} \to X$ by applying the unit natural transformation of the monad $\bdtt$ $p$ times. One model for $B^n$ is given by $|B_\bullet(\Sigma^n_+ ,\bdtt,-)|$, we get that $B^n(X \uplus D^N)$ is the realization of the bisimplicial object 
\[[p,q] \mapsto B_p(\Sigma^n_+,\bdtt,\bdtt(\mr{Comp}_{q}))\]


\noindent Let us first realize in the $B_\bullet$ direction. Theorem 9.10.3 of \cite{M} implies that $B(\Sigma^n_+ ,\bdtt,\bdtt Y) \simeq \Sigma^n Y_+$ for any $\cdtt$-algebra $Y$. This implies that $B^n(X \uplus D^N)$ is weakly equivalent to the realization of $\Sigma^n (\mr{Comp}_\bullet)_+$. 

As in Definition \ref{defpartialalgebracomp}, remark that $\mr{Comp}_p$ can be obtained as the push out
\[\xymatrix{S^{N-1} \ar[r] \ar[d] & D^n \ar[d] \\
(\bdtt)^p(X) \ar[r] & \mr{Comp}_p}\]
Since $\Sigma^n_+-$ preserves push outs, $\Sigma^n_+(\mr{Comp}_p)_+$ can be obtained as a push out as well:
\[\xymatrix{S^{n+N-1} \ar[r] \ar[d] & D^{n+N} \ar[d] \\
\Sigma^n_+(\bdtt)^p(X) \ar[r] & \Sigma^n (\mr{Comp}_p)_+}\]
Since geometric realization preserves colimits, we get that $|\Sigma^n(\mr{Comp}_\bullet)_+|$ is obtained as a push out
\[\xymatrix{S^{n+N-1} \ar[r] \ar[d] & D^{n+N} \ar[d] \\
B^n X \ar[r] & |\Sigma^n(\mr{Comp}_\bullet)_+| \simeq B^n(X \uplus D^N)}\] \end{proof}

This theorem also holds in $\cat{sSet}$ and $\cat{Ch}$, and in the latter case attaching an $\cdtt$-cell of dimension $N$ to $A$ amounts to attaching an ordinary $N$-cell to the cotangent space $L(A) \simeq B^n A[-n]$ (see \cite{Fr1} for more discussion of the cotangent space). Since the number of generators of homology serves as a lower bound for the number of cells, we get the following lemma.
 
\begin{lemma}
Let $X$ be an $\cdtt$-algebra in $\cat{Top}$. Suppose $X$ is equivalent to a cellular $\cdtt$-algebra with $k$ $\cdtt$-cells of dimension $i$. Then $k$ is at least as large as the number of generators of $\tilde H_{n+i}(B^n X)$ as an abelian group. 
\label{lowerbound}
\end{lemma} 

\begin{example}It is well known that $B^2 \mr{Sym}^{\leq d}(\R^2) \simeq \C P^d$, and the lemma thus implies that any $\cE^\mr{pt}_2$-cell decomposition of $\mr{Sym}^{\leq d}(\R^2)$ must have at least one $\cE^\mr{pt}_2$-cell in dimension $0,2,\ldots, 2(d-1)$. Proposition \ref{LuExample} says that this lower bound can be achieved for $\mr{Sym}^{\leq d}(\R^2)$. \end{example}

\subsubsection{Divisor spaces revisited}

Using ideas similar to those in Section \ref{boundedrevisited}, one can prove homological stability for the divisor spaces of \cite{Se} \cite{Gu} and the spaces of coprime polynomials considered in \cite{GKYcoprime}, as well as higher dimensional versions of these spaces.  For simplicity of notation, we restrict our attention to the case relevant to holomorphic maps to $\C P^1$, the case of $\Div^1(M)$.

\begin{definition}
Let $A$ and $B$ be $\cdt$-algebras in $\cat{Top}$. Let $A \vee B$ be the partial $\cdt$-algebra with underlying space $A \vee B$ and composition only defined if all elements are either in $A$ or all in $B$.
\end{definition}

This construction is relevant as $\Div^1(\R^n) \simeq \overline{\N_0 \vee \N_0}$. Proposition \ref{completecommuteswithcells} in this case says the following.

\begin{proposition}
\label{wedge}
Let $A$ and $B$ be $\N_0$-charged algebras with choice of $\cdt$-cell structures. There is an $\cdt$-cell structure on $\overline{A \vee B}$ with the following properties: 

\begin{enumerate}[(i)]
\item There are no $\cdt$-cells attached in charge $(k,j)$ unless either $k$ or $j$ is equal to zero.
\item There is a bijection between the $\cdt$-cells of homological degree $i$ of charge $k$ of $A$ with the $\cdt$-cells of  homological degree $i$ of charge $(k,0)$ of $\overline{A \vee B}$.
\item There is a bijection between the $\cdt$-cells of homological degree $i$ of charge $k$ of $B$ with the $\cdt$-cells of  homological degree $i$ of charge $(0,k)$ of $\overline{A \vee B}$.
\end{enumerate}

\end{proposition}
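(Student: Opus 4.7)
The plan is to proceed by induction on the $\cdt$-cell structures of $A$ and $B$, with Proposition \ref{completecommuteswithcells} providing the key commutation between cell attachment and completion. By Remark \ref{remcelldecompexist}, both $A$ and $B$ are built from their degree-$0$ subalgebras by iterated $\cdt$-cell attachments, and the goal is to show that each such cell produces a corresponding cell of $\overline{A \vee B}$ in charge $(k,0)$ or $(0,k)$, and that there are no other cells. Since the cell structures on $A$ and $B$ may be infinite, the ``induction'' should be read as an induction on a filtration by the finite subcomplexes, followed by passage to the colimit.

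For the base of the induction, take $A = 0 \uplus \biguplus_i e^0_{A,i}$ and $B = 0 \uplus \biguplus_j e^0_{B,j}$ to be free on their degree-zero cells. The partial algebra $A \vee B$ has no operations mixing the two factors, and I would verify that its $\N_0^2$-completion is simply the free $\cdt$-algebra on the generators $e^0_{A,i}$ placed in charges $(1,0)$ together with $e^0_{B,j}$ placed in charges $(0,1)$. This can be seen directly from the defining simplicial object $A_\bullet = B_\bullet(\bdt\circ\iota,\bdtc,A\vee B)$: the wedge-composability restriction on the interior bar coordinates is trivial on generators, so an extra-degeneracy argument in the inner direction identifies the realization with the free $\cdt$-algebra on the generators of each summand, sitting along the two coordinate axes. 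This establishes properties (i)--(iii) at the base.

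For the inductive step, suppose $A = A' \uplus e_N$, where $e_N$ is attached along a class $b_{N-1} \in A'(k)_{N-1}$. Viewing $(A' \uplus e_N) \vee B$ as a partial $\cdt$-algebra, its underlying chain complex is $(A' \vee B) \oplus e_N$ and its partial composability extends that of $A' \vee B$ by declaring that only the identity acts on $e_N$, mirroring Definition \ref{defpartialalgebracomp}. The central claim is then
\[
\overline{(A' \uplus e_N) \vee B} \;\simeq\; \overline{A' \vee B} \;\uplus\; e_N,
\]
where on the right $e_N$ is attached in charge $(k,0)$ along the image of $b_{N-1}$ under the completion map $A' \to \overline{A' \vee B}$. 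Granting this identification and applying the inductive hypothesis, the cell structure on $\overline{A' \vee B}$ gains exactly one new cell in charge $(k,0)$, preserving properties (i)--(iii). The symmetric argument handles cells attached to $B$; interleaving the two gives a cell structure on $\overline{A \vee B}$ of the claimed form.

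The main obstacle is establishing the commutation identity above. This is not a direct corollary of Proposition \ref{completecommuteswithcells} because $A' \vee B$ is a partial algebra rather than an algebra. I would adapt the bisimplicial argument from that proposition: express both sides as realizations of bisimplicial objects built from $B_\bullet(\bdt, \bdtc, -)$ constructions, with one simplicial direction recording the wedge-composability constraint (no interior operation mixes elements of $A'$ and $B$) and the other recording the cell-attachment constraint (only identities act on $e_N$ in the interior). Both bisimplicial objects then admit an augmentation to a common simplicial object $P_\bullet$ spanned by elements satisfying both constraints simultaneously. Extra-degeneracy arguments in each of the two directions give weak equivalences to $\|P_\bullet\|$, yielding the desired identification. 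The argument is structurally parallel to that of Proposition \ref{completecommuteswithcells}; the only additional care is in bookkeeping the wedge condition alongside the cell-attachment condition at each bisimplicial level.
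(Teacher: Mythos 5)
Your proposal is correct in outline, and its skeleton — induct over the $\cdt$-cells of $A$ and $B$, commuting the completion past each attachment, with a colimit argument for infinite cell structures — is exactly how the paper reads this statement. The difference is in how the key commutation is justified. The paper offers no standalone proof: it presents the proposition as what Proposition \ref{completecommuteswithcells} ``says in this case,'' i.e.\ one regards $A \vee B$ as a charged algebra over the partial monoid of charges $C = \N_0 \vee \N_0 \subset \N_0^2$ (composition defined only along the axes, with $c(C) = \N_0^2$), observes that the chosen cell structures on $A$ and $B$ assemble into a $C$-charged cell structure on $A \vee B$ with cells in charges $(k,0)$ and $(0,k)$, and then applies Proposition \ref{completecommuteswithcells} iteratively — exactly as in the proof of Proposition \ref{completionwithfunction} — to transport this cell structure to the $\N_0^2$-completion $\overline{A \vee B}$. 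Your ``main obstacle,'' the identity $\overline{(A' \uplus e_N)\vee B} \simeq \overline{A'\vee B}\uplus e_N$, therefore essentially \emph{is} Proposition \ref{completecommuteswithcells} for this choice of $C$, once one notes that attaching $e_N$ to $A'$ along $b_{N-1}\in A'(k)$ is the same as attaching it to the $C$-charged algebra $A'\vee B$ in charge $(k,0)$; your re-derivation by a bisimplicial comparison carrying both the wedge-composability and the cell-attachment constraints is not wrong, just a re-proof of that proposition in this special case. That said, your extra care is defensible: the wedge's composability condition (all inputs in $A$ or all in $B$) is slightly finer than the pure charge constraint whenever the charge-$0$ parts of $A$ and $B$ are larger than the unit, so the hands-on bisimplicial argument (or a reduction to the case of reduced charge-$0$ parts) is a legitimate way to make the paper's one-line assertion precise. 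Your base case (completion of a free $C$-charged algebra is the free $c(C)$-charged algebra, by the extra degeneracy in $B_\bullet(\bdt\circ\iota,\bdtc,\bdtc X)$) and the passage to colimits over finite subcomplexes are both fine and consistent with the paper's machinery.
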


We now prove homological stability for $\Div^1(M)$.

\begin{corollary}
Let $M$ be a $\theta$-framed connected  non-compact manifold of dimension at least $2$. The stabilization map $t\colon \Div^{1}_{(k,j)}(M) \m \Div^{1}_{(k+1,j+1)}(M)$ induces an isomorphism on $H_*$ for $* \leq \min(k/2,j/2)$. 
\end{corollary}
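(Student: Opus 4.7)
The plan is to realize $\Div^1(\R^n)$ as the completion of a wedge of two copies of the symmetric power $\cdt$-algebra, transfer a bounded cellular structure through the wedge via Proposition \ref{wedge}, and then feed this into Theorem \ref{thmmainrange}. As noted immediately before Proposition \ref{wedge}, there is a weak equivalence $\Div^1(\R^n) \simeq \overline{\Sym(\R^n) \vee \Sym(\R^n)}$, and the opening of Section \ref{secapplications} gives $\Div^1(M) \simeq \int_M \Div^1(\R^n)$. It therefore suffices to produce a controlled cellular model for $\overline{\Sym(\R^n) \vee \Sym(\R^n)}$ and then to invoke the implication (iii) $\Rightarrow$ (ii) of Theorem \ref{thmmainrange}.

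The first step is to produce a bounded cellular model for $\Sym(\R^n)$ calibrated to $\rho(x) = x/2$. Because each component $\Sym_k(\R^n)$ is contractible, every basic stabilization map on $\Sym(\R^n)$ is an equivalence and is hence trivially a $\rho(k)$-equivalence for this $\rho$; and $\rho(x) = x/2$ satisfies all the hypotheses of Theorem \ref{thmmainrange} (strictly increasing, subadditive with equality, bounded above by $x/2$, and $\rho^{-1}(\bN_0) = 2\bN_0 \subset \bN_0$). The implication (i) $\Rightarrow$ (iii) of that theorem then furnishes a weak equivalence $B \to \Sym(\R^n)$ of $\cdt$-algebras in which $B$ is bounded cellular and every positive-degree $\cdt$-cell in charge $k$ is attached in homological degree $N \geq k/2$.

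Applying Proposition \ref{wedge} to two copies of $B$ equips $\overline{B \vee B} \simeq \Div^1(\R^n)$ with a cellular structure all of whose $\cdt$-cells lie in charges of the form $(k,0)$ or $(0,k)$; in either case the total charge is $|\bk| = k$ and any positive-degree cell still satisfies $N \geq k/2 = \rho(|\bk|)$. Thus $\Div^1(\R^n)$ verifies condition (iii) of Theorem \ref{thmmainrange} with $\rho(x) = x/2$, so by (iii) $\Rightarrow$ (ii) the two basic stabilization maps $t_1$ and $t_2$ on $\int_M^{(k,j)} \Div^1(\R^n) \simeq \Div^1_{(k,j)}(M)$ are $\rho(k)$- and $\rho(j)$-equivalences respectively. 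The stabilization map $t$ in the statement factors as $t_2 \circ t_1$ (add a particle of the first color, then of the second), so it is a $\min(k/2,j/2)$-equivalence, giving the claimed isomorphism range.

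The main obstacle is really only the second step: using (i) $\Rightarrow$ (iii) of Theorem \ref{thmmainrange} to upgrade the trivial infinite-slope homological stability of $\Sym(\R^n)$ into a cellular model calibrated to the weaker $\rho(x)=x/2$ required by the theorem's hypotheses. Everything else is a formal combination of Proposition \ref{wedge} with (iii) $\Rightarrow$ (ii) of the main theorem, modulo the bookkeeping that a cellular replacement of $\Sym(\R^n)$ gives, via the wedge construction and completion, a genuinely weakly equivalent cellular model of $\Div^1(\R^n)$.
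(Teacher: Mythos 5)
Your argument is correct and is essentially the paper's own proof: write $\Div^1(\R^n)$ as the completion of a wedge of two algebras with contractible components, use the implication (i) $\Rightarrow$ (iii) of Theorem \ref{thmmainrange} with $\rho(x)=x/2$ to get a bounded cellular model of each wedge factor, transport the cell structure through Proposition \ref{wedge}, and conclude via (iii) $\Rightarrow$ (ii). The only cosmetic difference is that the paper phrases the wedge factors as $\N_0$ rather than $\Sym(\R^n)$, which are equivalent since each $\Sym_k(\R^n)$ is contractible.
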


\begin{proof}
Since the components of $\N_0$ have homological stability, by the implication (i) $\Rightarrow$ (iii) of Theorem \ref{thmmainrange}, we conclude that $C_*(\N_0)$ has an $\cdt$-cell decomposition with cells of charge $k$ having homological degree at least $k/2$. Using Proposition \ref{wedge}, we conclude that $C_*(\overline{\N_0 \vee \N_0})$ has an $\cdt$-cell decomposition with cells of charge $(k,j)$ having homological degree at least $\max(k/2,j/2)$. Thus, by the implication (iii) $\Rightarrow$ (ii) of Theorem \ref{thmmainrange}, $t\colon\int_M^{k,j} \overline{\N_0 \vee \N_0} \m \int_M^{k+1,j+1} \overline{\N_0 \vee \N_0}$ induces an isomorphism in homology in the range $* \leq \min(k/2,j/2)$. The claim now follows since $\Div^1(M) \simeq \int_M  \overline{\N_0 \vee \N_0}$. 
\end{proof}

This result is new in the case $M$ has dimension greater than $2$. We note that this proposition combined with non-abelian Poincar\'e duality \cite{Mi2} gives a proof of Segal's result that $\mr{Hol}_k^*(\C P^1, \C P^1) \m \mr{Map}_k^*(\C P^1, \C P^1)$ is a homology equivalence in a range tending to infinity with $k$ whose only input consists of the basic properties of holomorphic functions and the fact that $\N_0$ has homological stability.  

Gravesen \cite{gravesen} and Boyer, Hurtubise, Mann and Milgram \cite{BMHM} gave configuration space models for spaces of based holomorphic maps from $\bC P^1$ into generalized flag varieties. We believe that our techniques can also be applied to those cases.

\appendix 

\section{Homological stability for configuration spaces with labels} \label{appendix} In this appendix we prove a homological stability result that is known to some experts but did not appear in the literature before the first preprint of this paper was made available. It has since appeared as Proposition B.3 of \cite{federicomartin}. It can be proven by a straightforward extension of any of the standard techniques, and for the benefit of the reader we will not only give one proof, but sketch three additional proofs.

Recall that if $\pi \colon E \to M$ is a Serre fibration with fiber $F$, the \emph{configuration space of $k$ ordered particles with labels in the fibration $\pi$}, denoted $F_k^\pi(M)$, is given by the following subspace of $E^k$:
	\[F_k^\pi(M) \coloneqq \left\{ (e_1,\ldots,e_k) \middle|
	\pi(e_i) \neq \pi(e_j) \text{ for $i \neq j$}	\right\}\]
	
\noindent The \emph{configuration space of particles with labels in the fibration $\pi$}, denoted $C_k^\pi(M)$, is defined to be the quotient $F_k^\pi(M)/\mathfrak{S}_{k}$ where $\fS_k$ acts diagonally.

\begin{theorem}\label{thmbunstab} Let $M$ be an oriented smooth connected non-compact $n$-dimensional manifold and let $\pi \colon E \to M$ be a Serre fibration with path-connected fiber $F$.
\begin{enumerate}[(i)]
\item The stabilization map $t \colon H_*(C^\pi_k(M)) \to H_*(C^\pi_{k+1}(M))$ is a $k/2$-equivalence. 
\item If $n \geq 3$, the stabilization map $t \colon H_*(C^\pi_k(M);\bZ[1/2]) \to H_*(C^\pi_{k+1}(M);\bZ[1/2])$ is a $k$-equivalence. 
\end{enumerate}
\end{theorem}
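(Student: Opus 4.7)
The plan is to reduce to homological stability for unlabelled configuration spaces via a Serre spectral sequence comparison. The forgetful map $p_k : C_k(M;E) \to C_k(M)$ is a fiber bundle whose fiber over $\{m_1,\ldots,m_k\}$ is $\prod_i E_{m_i}$, homotopy equivalent to $F^k$ since $F$ is connected. Choose a basepoint $f_0 \in F$; the stabilization map $t : C_k(M;E) \to C_{k+1}(M;E)$ is defined by bringing in a particle from infinity labelled by $f_0$, and it covers the classical stabilization $\bar t : C_k(M) \to C_{k+1}(M)$.

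I would then compare the Serre spectral sequences
\[E^2_{p,q}(k) = H_p(C_k(M); \cH_q(F^k)) \Rightarrow H_{p+q}(C_k(M;E))\]
for $k$ and $k+1$. By the K\"unneth theorem, $\cH_q(F^{k+1}) \cong \bigoplus_{i+j=q} \cH_i(F^k) \otimes H_j(F)$, and the basepoint $f_0$ realises $\cH_q(F^k)$ as a summand of $\bar t^{*}\cH_q(F^{k+1})$. Thus the coefficient systems form a polynomial family in $k$ of degree $\leq q$, in the sense relevant for twisted homological stability.

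The main step is to show that the induced map on $E^2$-pages is an isomorphism in a range. For part (i), I would invoke twisted coefficient stability for unordered configuration spaces (as in \cite{RW} or Randal-Williams--Wahl), which gives a $(k-q)/2$-equivalence with coefficients of polynomial degree $q$, so $E^2_{p,q}(k) \to E^2_{p,q}(k+1)$ is an isomorphism for $p \leq (k-q)/2$; summing $p+q$ gives an isomorphism for $p+q \leq k/2$, which via spectral sequence comparison yields the desired $k/2$-equivalence for the abutment. For part (ii), the base stability range for $C_k(M)$ with $\bZ[1/2]$-coefficients improves to $k$ when $n \geq 3$ by \cite{kupersmillerimprov}, and the same argument gives range $k-q$ for the twisted coefficients, hence $p+q \leq k$ on the $E^\infty$-page.

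The hard part is the twisted coefficient step, and in particular verifying that the polynomial degree of $\cH_q(F^k)$ is controlled by $q$ independently of $F$ and that the stability range is genuinely $(k-q)/2$ (resp.\ $k-q$) rather than something weaker. Three alternative strategies avoid this issue and may be more direct: (a) a semisimplicial resolution by auxiliary points $m_0,\ldots,m_p \in M \setminus \pi(c)$, as in the proof of Theorem \ref{thmcompact}, combined with induction on $k$ using that $M \setminus \{m_0,\ldots,m_p\}$ remains connected and non-compact for $n \geq 2$; (b) the direct scanning argument of \cite{Mi2} extended to the labelled setting, noting that $C_k(M;E)$ is itself a topological chiral homology; or (c) a Randal-Williams--Wahl semisimplicial set argument built from a homogeneous groupoid of labelled configurations.
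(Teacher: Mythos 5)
Your main route is the same as the paper's own primary proof of Theorem \ref{thmbunstab}: compare the Serre spectral sequences of the fibration $F^k \to C_k(M;E) \to C_k(M)$ for $k$ and $k+1$, identify the local systems $\cH_q(F^k)$ as coefficient systems of degree $\leq q$ via the K\"unneth splitting given by the basepoint label, feed in twisted homological stability for $C_k(M)$ with range $* \leq (k-q)/2$ (the paper uses Corollary 1.6 of \cite{palmertwisted} for this), and conclude by the arithmetic $p+q \leq k/2 \Rightarrow p \leq (k-q)/2$ and spectral sequence comparison; for part (ii) the paper likewise defers to Proposition A.2 of \cite{federicomartin} or Palmer's method applied to \cite{kupersmillerimprov}. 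Your alternatives (a)--(c) also correspond closely to the paper's other three sketched proofs (resolution by arcs, transfer/scanning following \cite{Mc1}, and the representation-stability variant).

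There is, however, a genuine gap in the step you wave through: the coefficient question. The twisted-stability input as used in the paper is a statement with coefficients in a field $\bF$ (and your K\"unneth isomorphism $\cH_q(F^{k+1}) \cong \bigoplus_{i+j=q}\cH_i(F^k)\otimes H_j(F)$ is itself only valid over a field; integrally there are Tor terms, and the systems $k \mapsto H_q(F^k;\bZ)$ need not be finitely generated). Knowing that $t$ is a homology equivalence in the range $* \leq k/2$ with all field coefficients does not give the integral statement at the same range: running the Bockstein long exact sequences and the five lemma, as the paper does, one loses a degree and obtains only a $(k/2-1)$-equivalence in general. The paper recovers the full $k/2$-range along this route only under the extra hypothesis that $F$ is homotopy equivalent to a finite CW complex, using finite generation of $H_*(C_k(M;E))$, the universal coefficient theorem, and an exhaustion of $M$ by submanifolds with finite handle decompositions; for arbitrary connected $F$ the theorem as stated is proved there by the semisimplicial resolution by arcs, i.e.\ your alternative (a) is not merely an optional variant but what is actually needed for the unconditional statement. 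So to complete your proposal you must either produce an integral twisted stability theorem with range exactly $(k-q)/2$ applicable to these (possibly non-finitely generated, Tor-containing) local systems --- neither \cite{RW} (labels in a space rather than a bundle) nor the Randal-Williams--Wahl framework (which does not directly cover configuration spaces of a general open manifold) obviously supplies this --- or add the field-to-integral bookkeeping together with a finiteness hypothesis on $F$, or carry out the arc-resolution argument in full.
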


Before proving this result, we will prove a corollary about colored configurations. These are defined for $\bk = (k_1,\ldots,k_d)$ by $C^\pi_\bk(M) \coloneqq F^\pi_k(M)/\fS_\bk$ where $\fS_\bk = \prod_i \fS_{k_i} \subset \fS_{\sum k_i}$. This has $d$ different stabilization maps $t_i$, each introducing a particle of color $i$ at infinity.

\begin{corollary}\label{corbunmulh}
Under the assumptions of Theorem \ref{thmbunstab}, we have the following.
\begin{enumerate}[(i)]
\item The stabilization map $t_i: H_*(C^\pi_\bk(M)) \to H_*(C^\pi_{\bk+\be_i}(M))$ is a $k_i/2$-equivalence. 
\item If $n \geq 3$, the stablization map $t_i: H_*(C^\pi_\bk(M);\bZ[1/2]) \to H_*(C^\pi_{\bk+\be_i}(M);\bZ[1/2])$ is a $k_i$-equivalence. 
\end{enumerate}
\end{corollary}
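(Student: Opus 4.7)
The plan is to reduce Corollary \ref{corbunmulh} to the single-color Theorem \ref{thmbunstab} via a fiber bundle argument. Assume $n \geq 2$; the case $n=1$, where $M \cong \bR$ and $C_\bk(\bR;E)$ is a disjoint union of products of symmetric powers of the connected fiber $F$, reduces to Steenrod's classical homological stability for $\mr{Sym}^k(F)$ \cite{Srod}.

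For $n \geq 2$, I would consider the forgetful map
\[
\pi \colon C_\bk(M;E) \longrightarrow C_{\bk - k_i \be_i}(M;E)
\]
that deletes the color-$i$ particles and their labels. A standard isotopy extension argument shows that $\pi$ is a fiber bundle, with fiber over a configuration $P$ canonically identified with the single-color labeled configuration space $C_{k_i}(M \setminus |P|;\, E|_{M \setminus |P|})$, where $|P|$ denotes the underlying finite point set. Since $n \geq 2$ and $M$ is connected and non-compact, $M \setminus |P|$ is also connected and non-compact, and the restricted bundle retains its connected fiber $F$. Hence Theorem \ref{thmbunstab} applies to each fiber.

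The multi-color stabilization $t_i$ commutes with $\pi$, because inserting a color-$i$ particle near infinity does not affect the configuration of particles of other colors. Thus $t_i$ defines a map from $\pi$ to the analogous fiber bundle $\pi'\colon C_{\bk+\be_i}(M;E) \to C_{\bk-k_i\be_i}(M;E)$ over the common base, restricting on each fiber to the single-color stabilization. By Theorem \ref{thmbunstab}, this fiberwise map is a $k_i/2$-equivalence in case (i) and a $k_i$-equivalence with $\bZ[1/2]$-coefficients in case (ii). Applying the relative Serre spectral sequence to the mapping cone of $t_i$ (with local coefficients, since the base $C_{\bk-k_i\be_i}(M;E)$ need not be simply connected), the vanishing of $H_q$ of the fiberwise mapping cone in the stated range forces the same vanishing on the total space, which is the desired conclusion. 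The main technical point is verifying that $\pi$ is a fiber bundle (or at least a Serre fibration), which is routine via local isotopies but worth checking carefully.
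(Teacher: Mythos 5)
Your argument is essentially the paper's own proof: the paper also fibers $C_\bk(M;E)$ over $C_{\bk\setminus k_i}(M;E)$ by forgetting the color-$i$ particles, applies Theorem \ref{thmbunstab} to the fiber $C_{k_i}(M\setminus\{\text{points}\};E)$, and concludes by Serre spectral sequence comparison, so the core of your proposal is correct (your extra care about $\pi$ being a fibration and about local coefficients is welcome but not a new route). One caveat: your $n=1$ aside is wrong. Since points of $\bR$ are linearly ordered, $C_{k}(\bR;E)\cong\{x_1<\cdots<x_k\}\times F^{k}\simeq F^{k}$ (no symmetrization of the labels occurs), not a symmetric power of $F$, and the stabilization $F^{k}\to F^{k+1}$ is not homology-surjective in general (take $F=S^1$ and look at $H_1$); so stability genuinely fails in dimension one, and the statement should be read with $n\geq 2$ --- an implicit hypothesis the paper's own proof also uses, since it needs $M$ minus finitely many points to stay connected, and which is harmless because all applications have $n\geq 2$.
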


\begin{proof}We prove case (i), case (ii) being similar. Forgetting all colors except the $i$th one gives rise to a fiber sequence
\[C_{k_i}^\pi(M\setminus \{k - k_i\text{ points}\}) \to C^\pi_\bk(M) \to C^\pi_{\bk \setminus k_i}(M)\]
The stabilization map induces a map of Serre spectral sequences which is homotopic to the identity on the base and is given by the stabilization map
\[t: H_*(C^\pi_{k_i}(M\setminus \{k-k_i \text{ points}\})) \to H_*(C^\pi_{k_i+1}(M\setminus \{k-k_i \text{ points}\}))\]
on the fiber. Combining Theorem \ref{thmbunstab} with spectral sequence comparison gives the desired result.
\end{proof}

We will give one complete proof of Theorem \ref{thmbunstab} and three sketches, with various additional assumptions and conclusions:
\begin{enumerate}[(i)]
\item Using homological stability with twisted coefficients one can get the theorem with a $(k/2-1)$-equivalence, which can be upgraded to a $k/2$-equivalence if we assume that each $H_i(F)$ is finitely generated.
\item Using a semisimplicial resolution by arcs, one can also get the theorem as stated.
\item Using transfers and finite generation of the limit, one can get the result without an explicit range if we assume that $F$ is homotopy equivalent to a finite CW-complex.
\item Using $FI\#$-modules one can prove representation stability for the rational cohomology, which implies Theorem A.1 with rational coefficients, as long as we assume that $M$ is simply-connected and each $H^i(F;\bQ)$ is finite-dimensional.
\end{enumerate}

Note that our primary application has $F \simeq SO(n)$, so that $F$ satisfies the assumptions in (i), (iii) and (iv).

\begin{proof}[Proof using homological stability with twisted coefficients] We start with a complete proof in the case that $F$ has the property that each homology group $H_i(F)$ is finitely generated. Otherwise this proof only gives a $(k/2-1)$-equivalence. We will only do the case (i), noting that the input for case (ii) is Proposition A.2 of \cite{federicomartin} or follows from applying Palmer's techniques in \cite{palmertwisted} to the main result of \cite{kupersmillerimprov}.

This proof is the shortest we are aware of and uses Palmer's result on stability for the homology of configuration spaces with coefficients in certain local systems \cite{palmertwisted}. His Corollary 1.6 in particular implies that the map 
\[t \colon H_*(C_k(M);\cH_q(F^k;\bF)) \to H_*(C_{k+1}(M);\cH_q(F^{k+1};\bF)) \]
induced by a stabilization map, which adds a new point to the configuration with some choice of label in the fiber, is an isomorphism in the range $* \leq \frac{k-q}{2}$ where $\bF$ is some field. Here $\cH_q(F^k;\bF)$ is the local system of abelian groups with fiber above a point $(m_1,\ldots,m_k) \in C_k(M)$ given by $H_q(F_{m_1} \times \cdots \times F_{m_k};\bF)$. To apply this result we consider the Serre spectral sequence for homology with $\bF$-coefficients associated to the Serre fibration
\[F^k \to C^\pi_k(M) \to C_k(M)\]

\noindent This has $E^2$-page given by $E^2_{pq} = H_p(C_k(M);\cH_q(F^k;\bF))$ and the map $t: C_k(M;E) \to C_{k+1}(M;E)$ induces a map of spectral sequences given on the $E^2$-page by the map considered by Palmer. The stability with $\bF$-coefficients for $C^\pi_k(M)$ now follows by a spectral sequence comparison.

For integral stability it is enough to prove stability with all field coefficients. The $(k/2-1)$-equivalence can now be proven by considering the long exact sequence in homology associated to the short exact sequences of abelian groups: 
 \[0 \to \bZ_{p} \to \bZ_{p^{i+1}} \to \bZ_{p^i} \to 0\]
\[0 \to \bZ \to \bQ \to \bQ/\bZ \cong \bigoplus_p \bZ_{p^{\infty}} \to 0\]
and applying the five-lemma several times. Using a transfer argument, one can show that the stabilization map is always injective in homology so in fact it induces an isomorphism for $*=k/2-1$ as well. The loss of $1$ in the range happens when applying the second long exact sequence, so we still get a $k/2$-equivalence for $\bZ/p^m \bZ$-coefficients for all primes $p$ and all $m \geq 1$.

To improve this to a $k/2$-equivalence when $F$ has finitely generated homology  groups, first make the assumption that $M$ has a finite handle decomposition. In that case $H_*(C^\pi_k(M))$ can be shown to be finitely generated using the Serre spectral sequence used before. Then note that the previous proof implies that 
\[\mr{Tor}_\bZ(H_{i-1}(C^\pi_k(M)),\bZ/p^m \bZ) \to \mr{Tor}_\bZ(H_{i-1}(C^\pi_{k+1}(M)),\bZ/p^m \bZ)\]
is an isomorphism for $i \leq k/2$, all primes $p$ and all $m \geq 1$. By the universal coefficient theorem we then conclude that $H_i(C^\pi_k(M)) \otimes \bZ/p^m \bZ \to H_i(C^\pi_{k+1}(M)) \otimes \bZ/p^m \bZ$ is an isomorphism for all $i \leq k/2$, all primes $p$ and all $m \geq 1$. But if $f: A \to B$ is a map of finitely generated abelian groups such that $f \otimes \bZ/p^m \bZ: A \otimes \bZ/p^m \bZ \to B \otimes \bZ/p^m \bZ$ is an isomorphism for all primes $p$ and $m \geq 1$, then $f$ is an isomorphism. Finally, remark that any $M$ can  be exhausted by open submanifolds $M_j$ with finite handle decompositions compatibly with the stabilization map.
\end{proof}

\begin{proof}[Sketch of proof using semisimplicial resolutions by arcs] The second proof uses a semisimplicial resolution by arcs and is a modification of the proof in \cite{RW}. In contrast to the previous result, it requires no additional assumptions on $M$ or $F$. We will only give the proof in the case (i). Case (ii) can be established by modifying the arguments of Section 3.5 of \cite{MW1} to this context.

The idea is to consider the following semisimplicial space $X_\bullet(k)$ with augmentation to $C^\pi_k(M)$. One picks an embedding $\phi$ of $(0,1)$ into $\partial M$. The space $X_p(k)$ of $p$-simplices consists of pairs $(x,((\gamma_0,\eta_0),\ldots,(\gamma_p,\eta_p)))$. Here we have that $x$ is an element of $C_k(M)$, the $\gamma_i$ are disjoint embeddings $[0,1] \to \bar{M}$ such that $\gamma_i(0) = \phi(t_i)$ with $t_0 < \ldots < t_p$ and $\gamma_i(1)$ is a point in our configuration, and the $\eta_i$ are lifts $[0,1] \to E$ of $\gamma_i$ such that $\eta_i(1)$ hits the label of $\gamma_i(1)$. The embeddings are topologized in the $C^\infty$-topology, the lifts in the compact-open topology and the face maps each forget one of the arcs and their lift.

We now make two claims, whose proofs we will sketch later: (i) the map $\epsilon: ||X_\bullet(k)|| \to C^\pi_k(M)$ induced by the augmentation is $(k-1)$-connected and (ii) $X_p(k) \simeq C^\pi_{k-p-1}(M) \times F^{p+1}$. The stabilization map $t \colon C^\pi_k(M) \to C^\pi_{k+1}(M)$ lifts to a semisimplicial map $t_\bullet \colon X_\bullet(k) \to X_\bullet(k+1)$. This gives a spectral sequence with $E^1$-page $E^1_{pq} = H_q(X_p(k+1),X_p(k)) = \bigoplus_{i+j=q} H_i(C^\pi_{k-p}(M),C^\pi_{k-p-1}(M)) \otimes H_j(F^{p+1})$ (modulo K\"unneth), converging to the relative homotopy groups $H_p(C^\pi_{k+1}(M),C^\pi_k(M))$ in a range. A spectral sequence argument as in Section 6 of \cite{RW} then finishes the proof.

Let's sketch the justification for the two claims. Claim (i) is proven by noting that for a fixed configuration $x \in C^\pi_k(M)$ the fibers of $\epsilon$ are $(k-1)$-connected and $\epsilon$ is a microfibration. Here the argument diverges for $\dim M = 2$ and $\dim M > 2$. In the case $\dim M = 2$ one leverages the connectivity of the complex $A(S;\Delta^n,\Lambda_n)$ in Section 7 of \cite{hatcherwahl} and the fact that the space of representatives of an isotopy class is contractible. Hatcher and Wahl also discuss the generalization to higher dimensions. In the case $\dim M > 2$ one leverages the connectivity of the complex of injective words on the elements of the configuration, general position and an argument that under certain conditions allows one to deduce that a semisimplicial space is highly connected if its underlying semisimplicial set is. Such an argument is given in the proof of Theorem 4.6 of \cite{sorenoscarstability}. The second claim is proven by moving the $p+1$ of the points of the configuration along the paths $\gamma_i$, while moving the label along $\eta_i$.\end{proof}

\begin{proof}[Sketch of proof using transfers and finite generation of the limit] This proof is a straightforward adaption of the proof in \cite{mcduffposneg}, which in fact claims the theorem on page 93. As mentioned before, we will need to assume that $F$ is homotopy equivalent to a finite CW-complex, and we do not get an explicit range. As in the end of the first proof, we can assume that $M$ has a finite handle decomposition.

For all $j \leq k$ there is a transfer map $\tau_{k,j}: H_*(C^\pi_k(M)) \to H_*(C^\pi_j(M))$ induced by summing over all ways of forgetting $j-k$ of the $k$ labeled particles. This satisfies the equation $\tau_{k+1,j} \circ t = \tau_{k,j} + t \circ \tau_{k,j-1}$. By Lemma 2 of \cite{Do} we have that $t: H_*(C^\pi_k(M)) \to H_*(C^\pi_{k+1}(M))$ is the inclusion of a direct summand. 

So we are done if we can prove that the limiting homology $\mr{colim}_{k \to \infty} H_*(C^\pi_k(M))$ is finitely-generated in each degree. There is a scanning map $C^\pi_k(M) \to \Gamma^c_k(M,\dot{TM} \wedge E_+)$, where $\Gamma^c_k$ are the degree $k$ compactly-supported sections and $\dot{TM} \wedge E_+$ is the fiberwise smash product of the fiberwise one-point compactification of the tangent space with the disjoint union of $E$ and a fiberwise disjoint basepoint. There are stabilization maps between the components of this section space by introducing a collection of compactly supported degree 1 or -1 sections at the boundary. These are all weak equivalences. 

Remark that limiting homology $\mr{colim}_{k \to \infty} H_*(C^\pi_k(M))$ is given by the homology of $\tilde{C}^\pi(M) = \mr{hocolim}_{k \to \infty} C^\pi_k(M)$, where we take the homotopy colimit over $t$. The scanning map then induces a map
\[\tilde{C}^\pi(M) = \mr{hocolim}_{k \to \infty} C^\pi_k(M) \to \mr{hocolim}_{k \to \infty} \Gamma^c_k(M;\dot{TM} \wedge E_+) \simeq \Gamma^c_0(M; \dot{TM} \wedge E_+)\]
where the latter homotopy equivalence follows from the fact that all stabilization maps for the section spaces are weak equivalence. That map is a homology equivalence follows by induction over handles, a result which can be found in \cite{manthorpetillmann}.

That $\Gamma^c_0(M; \dot{TM} \wedge E_+)$ has finitely-generated homology follows from the case $M = \bR^n$, in which one considers the homology of $\Omega^k \Sigma^n F_+$ for $k \leq n$ using iterated Eilenberg-Moore spectral sequences, and a handle induction using the fact that a compact manifold has a finite handle decomposition.\end{proof}

\begin{proof}[Sketch of rational proof using $FI\#$-modules] The following argument works if one is willing to use rational coefficients and assume that $M$ is simply-connected and each rational cohomology group of the fiber $F$ of $E$ is finitely-dimensional. Again, without loss of generality we can assume that $M$ has a finite handle decomposition. 
	
We will actually prove a stronger result, uniform representation stability for $H^i(F^\pi_k(M))$. With rational coefficients we have an isomorphism $H^i(C^\pi_k(M);\bQ) = H^i(F^\pi_k(M);\bQ)^{\fS_k}$. One can prove statements about the representations of $\fS_k$ that show up in $H^i(F^\pi_k(M);\bQ)$ using the theory of $FI$- and $FI\#$-modules as developed by Church, Farb and Ellenberg in \cite{churchellenbergfarb}, whose notation we will freely use. Using this theory we will prove that $\{H^i(F^\pi_k(M);\bQ)\}_{k \in \bN}$ is uniformly representable stable with stable range $\geq 2i$, because then in particular the number of trivial $\fS_k$-representations appearing in $H^i(F^\pi_k(M);\bQ)$ stabilizes for $i \leq k/2$ respectively.

If $M$ is non-compact and path-connected, then the collection $\{H^i(F^\pi_k(M);\bQ)\}_{k \in \bN}$ assembles into an FI\#-module. For an FI\#-module the stable range for uniform representation stability is twice its weight by Corollary 2.59 of \cite{churchellenbergfarb}.

In Theorem 4.7 they prove that $\{H^i(F_k(M);\bQ)\}_{k \in \bN}$ has weight $\leq i$. The weight of $\{H^i(F^k;\bQ)\}_{k \in \bN}$ is $\leq i$, which follows from Proposition 2.51 and Proposition 2.72. The Serre spectral sequence for the fibration
\[F^k \to F^\pi_k(M) \to F_k(M)\]
is then a spectral sequence of FI\#-modules with $E^{pq}_2 = H^p(F_k(M);\bQ) \otimes H^q(F^k;\bQ)$. Weight is additive under tensor products by Proposition 2.61 and by definition preserved under extensions and subquotients. This has weight $\leq p+q$ and hence so has $H^{p+q}(F^\pi_k(M);\bQ)$.
\end{proof}

\begin{remark}
	The last proof can be adapted to prove a similar stabilization result for $H^*(F^\pi_k(M);\bQ)$ when $M$ is closed, by working with FI-modules and keeping track of both weight and stability degree. We have no use for this, since for a closed manifold $M$ there are no maps relating $\int^k_M A$ and $\int^{k+1}_M A$.
\end{remark}

\begin{remark}
One can also prove an integral representation stability results without assuming any conditions on $M$ or $F$ by modifying the argument in Section 3.2 of \cite{MW1}. 
\end{remark}

\bibliographystyle{unsrt}
\bibliography{cell}

\end{document}